\documentclass[10pt]{article}
\usepackage{amsmath,amscd}
\usepackage{amssymb,latexsym,amsthm}
\usepackage{color,palatino}
\usepackage[utf8]{inputenc}
\usepackage[T1]{fontenc}
\usepackage{csquotes}
\usepackage{latexsym}
\usepackage{multirow}
\usepackage{lscape}
\usepackage{enumerate}
\usepackage{fancyhdr}
\usepackage{pb-diagram}
\usepackage{amsfonts}
\usepackage{hyperref}
\hypersetup{
  colorlinks=true,
  linkcolor=blue,
  citecolor=black,
  urlcolor=cyan
}

\newtheorem{teorema}{Theorem}[section]
\newtheorem{proposicion}[teorema]{Proposition}
\newtheorem{lema}[teorema]{Lemma}

\newtheorem{corolario}[teorema]{Corollary}
\newtheorem{remark}[teorema]{Note}
\newtheorem{ejemplo}[teorema]{Example}

\newtheorem{definicion}[teorema]{Definition}

\newtheorem{observacion}[teorema]{Remark}

\newcommand{\pqnum}[1]{\left[#1\right]_{p,q}}
\newcommand{\Hpq}{H_{p,q}}
\newcommand{\K}{\Bbbk}

\newcommand{\Z}{\mathbb{Z}}
\newcommand{\N}{\mathbb{N}}

\makeatletter
\def\@roman#1{\romannumeral #1}
\makeatother

\title{\textbf{The PI Property in Algebras of Polynomial Type}}
\author{James Gómez and Claudia Gallego} 
\date{}
\begin{document}
\makeatletter
\def\@roman#1{\romannumeral #1}
\makeatother
\maketitle
\begin{abstract}
\noindent
In this article, we study the PI property for several families of
noncommutative algebras of polynomial type. 
Specifically, we review
criteria for the PI property in double Ore extensions, two-parameter quantum
Heisenberg algebras, two-parameter quantum matrix algebras, the algebra
$U_q^+(B_2)$, multiparametric quantum Weyl algebras, biquadratic algebras
with three generators, Noetherian Down--Up algebras, and the recently
introduced algebras $B_q(f)$. In several cases, we include detailed proofs of
known results, provide proofs of some identities used in the literature, and
present alternative proofs of results characterizing the PI property for some
of these algebras. For Noetherian Down--Up algebras, we highlight the
relationship between the PI property, finiteness over the center, and the FBN
property. Finally, for the algebras $B_q(f)$, we  prove that they
admit a PBW basis and show that the PI property can be controlled in terms of
the support of the polynomial $f$.
\bigskip

\noindent
\textit{Keywords: PI property, quantum Heisenberg algebras, double Ore extensions, down-up algebras, quantum matrix algebras.} 
\bigskip

\noindent 2020 \textit{Mathematics Subject Classification.} Primary: 16R40, 16S38, 16S80.  Secondary: 16R99.
\end{abstract}
\section{Introduction}
\noindent
The earliest research on algebras satisfying polynomial identities (PI algebras) was developed by Dehn in 1922, motivated by determining what types of theorems are valid in non-Archimedean geometries (see \cite{dehn}). On the other hand, in 1937, Wagner in his article about the foundations of projective geometry established polynomial identities for matrix algebras and quaternion algebras, see \cite[Chapter 2]{wagner}; shortly thereafter, in 1947,  Jacobson and  Kaplansky laid the first foundations of what is now known as PI theory, see \cite{jacobson, kaplansky}. Furthermore, Pascaud and Valette in \cite{pasca}, Cauchon in \cite{cauchon}, and Damiano and Shapiro in \cite{damiano} provide necessary and sufficient conditions for an Ore extension  $R[x; \sigma, \delta]$ to satisfy a polynomial identity. In recent years, the study of algebras satisfying a polynomial identity (the PI property) has gained renewed interest in noncommutative algebra. This property serves, for instance, as a key tool for cla\-ssi\-fying the simple modules of an algebra, establishing relations between an algebra and its center, and studying finite generation over central subalgebras. Moreover, in recent investigations on the Ozone group, as defined by Chan, Gaddis, Won, and Zhang, the PI property is used as a key hypothesis for the algebras under consideration; see \cite{BeraMukherjee2023, ChanGaddisWonZhang2025}.

In this article, we present in detail results on the PI property for several
algebras of polynomial type, including some recently introduced ones.

We provide detailed proofs of known results on the PI \-pro\-per\-ty, including
proofs of some identities used in the arguments, for double Ore extensions
(Section~\ref{oredobles}), two-parameter Heisenberg algebras
(Section~\ref{Heisenbergdedosparametros}), the algebra $U_q^+(B_2)$
(Section~\ref{álgebraU_q+}), and Noetherian Down--Up algebras
(Section~\ref{DownUpalgebras}). For these classes of algebras, we emphasize
a characterization of the PI property in terms of roots of unity, finiteness
over the center, and the FBN property. 

We also establish alternative proofs to those available for two-parameter
quantum matrix algebras (Section~\ref{matrices2p}), multiparametric quantum
Weyl algebras (Section~\ref{Wely cuanticas}), and biquadratic algebras with
three generators (Section~\ref{bicuadráticas}).

In Section \ref{$B_q(f)$}, we consider the recently introduced family of algebras
$B_q(f)$. We prove that these algebras admit a PBW basis and show
that the PI property of $B_q(f)$ can be controlled in terms of the support of
the defining polynomial $f$. When $q$ has finite multiplicative order $n$, the
condition
\[
\operatorname{supp}(f)\subseteq n\mathbb{N}
\]
guarantees that $B_q(f)$ satisfies a polynomial identity. Thus, the
divisibility of the exponents appearing in $f$ by the order of $q$ provides a
natural sufficient condition for the algebra to satisfy the PI property. This
phenomenon is illustrated by means of some particular examples. In addition,
we present in detail the results proposed in \cite{GaddisYee2025} concerning
the PI property for these algebras, showing that the condition that $q$ be a
root of unity is necessary for $B_q(f)$ to be a PI algebra. Finally, in the
positive-characteristic setting, we analyze how this hypothesis influences the
description of some central elements of $B_q(f)$. 

Throughout this article, $\K$ denotes an algebraically closed field of
arbitrary characteristic, except for the results treated in Sections \ref{Heisenbergdedosparametros}, \ref{DownUpalgebras}
and \ref{$B_q(f)$}. In Sections \ref{Heisenbergdedosparametros}
and \ref{DownUpalgebras}, which are devoted to the two-parameter quantum
Heisenberg algebra $H_{p,q}$ and to Down-Up algebras, respectively, the
base field is assumed to have characteristic zero. On the other hand, in
Section \ref{$B_q(f)$}, devoted to the algebras $B_q(f)$, the role of the
characteristic is made explicit; in particular, some results are discussed
separately according to whether the base field has characteristic zero or
positive characteristic.

Let $A$ be an arbitrary ring. A polynomial identity for $A$ is a polynomial
\[
f(x_1,\ldots,x_n)\in \mathbb{Z}\langle x_1,\ldots,x_n\rangle
\]
such that
\[
f(a_1,\ldots,a_n)=0
\]
for all $a_1,\ldots,a_n\in A$. Following the convention in
\cite[Section~13.1]{mcconnell}, a ring $A$ is called a
PI ring if it satisfies a monic polynomial identity in
$\mathbb{Z}\langle x_1,\ldots,x_n\rangle$.

More generally, let $R$ be a commutative ring and let $A$ be an
$R$-algebra. A polynomial identity for $A$ over $R$ is a polynomial
\[
f(x_1,\ldots,x_n)\in R\langle x_1,\ldots,x_n\rangle
\]
such that
\[
f(a_1,\ldots,a_n)=0
\]
for all $a_1,\ldots,a_n\in A$. In this paper, an $R$-algebra $A$ will be
called a PI algebra if its underlying ring is a PI ring in the preceding
sense.

When the base ring is a field $\K$, this convention agrees with the usual
one: a $\K$-algebra is PI if and only if it satisfies a nonzero polynomial
identity in $\K\langle x_1,\ldots,x_n\rangle$. Indeed, if such an identity
has a nonzero coefficient among its highest-degree monomials, then one can
multiply the polynomial by the inverse of that coefficient and obtain a
monic polynomial identity.


\section{Double Ore extensions}\label{oredobles}
Double Ore extensions were defined in \cite{Zhang} as a generalization of Ore extensions. Using double Ore extensions, 26 families of Artin-Schelter regular algebras of global dimension 4 were obtained (see \cite{Zhang2}). Double Ore extensions are constructed from a ring $A$ by the simultaneous adjunction to $A$ of two generators $x_1$ and $x_2$.

\begin{definicion} [{\cite[Definition 1.3]{Zhang}}]\label{defore2} Let $A$ be an algebra and $B$ another algebra containing $A$  as a subring:
\begin{itemize}
\item[\rm (i)] It is said that $B$ is  a \textbf{right double Ore extension of} $A$ if the following conditions hold:
\begin{enumerate}
\item[\rm (ia)] The ring $B$ is generated by $A$ and two new variables $x_1$ and $x_2$.
\item[\rm (iia)] The variables $x_1$ and $x_2$ satisfy the relation
\begin{equation}\label{R1}
x_2x_1= p_{12}x_1x_2 + p_{11}x_1^2 + \tau_1x_1 + \tau_2 x_2 + \tau_0,
\end{equation}
where $p_{12}$, $p_{11}\in \K$ and $\tau_1$,  $\tau_2$, $\tau_0 \in A$.
\item[\rm (iiia)] As a left $A$-module, the $\K$-algebra $B$ is free with basis
\begin{align*} 
\{x_1^{\alpha_1} x_2^{\alpha_2} \mid \alpha_1,  \alpha_2 \geq 0\}.
\end{align*}
\item[\rm (iva)] \begin{equation}\label{4ore2}
x_1A + x_2A \subseteq Ax_1 + Ax_2 + A.
\end{equation}
\end{enumerate}
\item[\rm (ii)] Analogously, it is said that $B$ is a \textbf{left double Ore extension of} $A$ if the following conditions hold:
\begin{enumerate}
\item[\rm (ib)] The ring $B$ is generated by $A$ and two new variables $x_1$  and $x_2$.
\item[\rm (iib)] These variables $x_1$ and $x_2$ satisfy the relation
\begin{equation}\label{L1}
x_1x_2= p'_{12}x_2x_1 + p'_{11}x_1^2 + x_1\tau'_1 + x_2\tau'_2 + \tau'_0,
\end{equation}
where $p'_{12}$, $p'_{11} \in \K$
and $\tau'_1$, $\tau'_2$, $\tau'_0 \in A$.
\item[\rm (iiib)] As a right $A$-module the ring $B$ is free with basis \begin{align*} 
\{x_2^{\alpha_1} x_1^{\alpha_2} \mid \alpha_1, \alpha_2 \geq 0\}.
 \end{align*}
\item[\rm (ivb)] 
$Ax_{1} + Ax_{2} \subseteq x_{1}A + x_{2}A + A$.
\end{enumerate}
\item[\rm (iii)] If $B$ is both a left and a right double Ore extension of $A$ it is said that $B$ is \textbf{a double Ore extension of} $A$ with the same set of generators $\{x_1,x_2\}$.
\end{itemize}
\end{definicion}
In the graded case, all relations of $B$ are required to be homogeneous with $\deg(x_1)>0$ and $\deg(x_2) > 0$. We denote by $P$ the set of scalars $\{p_{12},p_{11}\}$, while $\tau$ denotes the set $\{\tau_1,\tau_2,\tau_0\}$. The sets $P$ and $\tau$ are called the \textbf{parameter} and the \textbf{tail}, respectively.

For computational purposes, it will be considerably more useful to have an explicit description of the definition of a double Ore extension, specifically with regard to the identities that establish the multiplication relations. To this end, condition (\ref{4ore2}) is rewritten in the following form:
\begin{equation}\label{R2}
\left(\begin{smallmatrix}
x_1 \\
x_2 \\
\end{smallmatrix}\right)r
 := \left(\begin{smallmatrix}
x_1r \\
x_2r \\
\end{smallmatrix}\right)= \left(\begin{smallmatrix}
\sigma_{11}(r) && \sigma_{12}(r) \\
\sigma_{21}(r) && \sigma_{22}(r) \\
\end{smallmatrix}\right)\left(\begin{smallmatrix}
x_1 \\
x_2 \\
\end{smallmatrix}\right) + \left(\begin{smallmatrix}
\delta_1(r) \\
\delta_2(r) \\
\end{smallmatrix}\right), 
\end{equation}

for all $r \in A$.
Writing $\sigma(r) = \left(\begin{smallmatrix}
\sigma_{11}(r) && \sigma_{12}(r) \\
\sigma_{21}(r) && \sigma_{22}(r) \\
\end{smallmatrix}\right)$ and $\delta(r) = \left(\begin{smallmatrix}
\delta_{1}(r)  \\
\delta_{2}(r)  \\
\end{smallmatrix}\right)$, such $\sigma$ turns out to be a $\K$-linear map from $A$ to $M_2(A)$, where $M_2(A)$ denotes the algebras of $2\times 2$ matrices over $A$, while $\delta$ is a $\K$-linear morphism from $A$ to the column $A$-module $A^{\oplus^{2}}:= \left(\begin{smallmatrix}
A \\
A  \\
\end{smallmatrix}\right)$. Thus, equation (\ref{R2}) can also be written in the following form:
$$\left(\begin{smallmatrix}
x_1\\
x_2 \\
\end{smallmatrix}\right)r = \sigma(r)\left(\begin{smallmatrix}
x_1\\
x_2\\
\end{smallmatrix}\right) + \delta(r).$$
This equation is a natural generalization of one of the conditions in the definition of an Ore extension.
\begin{definicion} Suppose that $\sigma : A \rightarrow M_2(A)$ is an algebra morphism, with $\sigma(rs)= \sigma(r)\sigma(s)$ for all $r,s \in A$. The $\K$-linear function $\delta: A \to A^{\oplus^2}$ is called a $\sigma$-\textbf{derivation} if:  $$\delta(rs)=\sigma(r)\delta(s) + \delta(r)s$$
for all $r$, $s \in A$.
\end{definicion}

\begin{remark}
\begin{itemize}
\item[\rm (1)] By virtue of what was established in (\ref{R2}), the morphisms $\sigma$
 and $\delta$
are completely determined. Under these conditions, a double Ore extension of $A$ is also denoted by $B=A_P[x_1,x_2; \sigma, \delta,\tau]$. With this notation it is understood that one is working with a right double Ore extension (although such a $B$ may also denote a left double Ore extension).
\item[\rm (2)] If $\delta$ is the zero map and $\tau=\emptyset$  then the right double Ore extension is denoted by $A_P[x_1,x_2;\sigma]$  and is called a \textbf{trimmed right double Ore extension}.
\item[\rm (3)] If $B=A_P[x_1,x_2;\sigma,\delta,\tau]$  denotes a (right-left) double Ore extension, then the collection 
$\{P, \sigma, \delta, \tau\}$ is called the \textbf{DE-data} of $A$.
\end{itemize}
\end{remark}

\begin{lema}[{\cite[Lemma 1.7]{Zhang}}] \label{lema1.7}
Let $B = A_P[x_1,x_2;\sigma, \delta, \tau]$ be a right double Ore extension of $A$, with $\{ \sigma,\delta \}$ those morphisms determined by (\ref{R2}). The following holds 
\begin{enumerate}
\item[\rm (i)] $\sigma: A \rightarrow M_2(A)$ is an algebra morphism.
\item[\rm (ii)] $\delta: A \rightarrow A^{\oplus^2}$ is a $\sigma$-derivation.
\end{enumerate}  
\end{lema}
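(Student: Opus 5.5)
The plan is to compute the product $\left(\begin{smallmatrix} x_1\\ x_2\end{smallmatrix}\right)(rs)$ in two ways and compare coefficients. The comparison is justified by the uniqueness of coefficients coming from the freeness in (iiia) of Definition~\ref{defore2}. First I would record that $\sigma$ and $\delta$ are well defined. For each $r\in A$, condition (iva) says that $x_ir$ lies in $Ax_1+Ax_2+A$. By (iiia), $\{1,x_1,x_2\}$ is part of a left $A$-basis of $B$, so the coefficients $\sigma_{ij}(r)$ and $\delta_i(r)$ in (\ref{R2}) are uniquely determined. Uniqueness also shows that these maps are $\K$-linear, since $x_i(\lambda r+\mu s)=\lambda x_ir+\mu x_is$ and the right-hand sides add coefficientwise. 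For the same reason, applying (\ref{R2}) to $r=1$ gives $\sigma(1)=I_2$ and $\delta(1)=0$, so $\sigma$ is unital.

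Next, for $r,s\in A$ I would compute, using associativity in $B$ and the matrix notation of (\ref{R2}),
\[
\left(\begin{smallmatrix} x_1\\ x_2\end{smallmatrix}\right)rs=\Bigl(\sigma(r)\left(\begin{smallmatrix} x_1\\ x_2\end{smallmatrix}\right)+\delta(r)\Bigr)s
=\sigma(r)\Bigl(\sigma(s)\left(\begin{smallmatrix} x_1\\ x_2\end{smallmatrix}\right)+\delta(s)\Bigr)+\delta(r)s .
\]
The point to check carefully is that the entrywise products are legitimate. The $i$-th row is $\sum_j\sigma_{ij}(r)x_js+\delta_i(r)s$. Substituting $x_js=\sum_k\sigma_{jk}(s)x_k+\delta_j(s)$ gives
\[
\sum_k\Bigl(\sum_j\sigma_{ij}(r)\sigma_{jk}(s)\Bigr)x_k+\sum_j\sigma_{ij}(r)\delta_j(s)+\delta_i(r)s .
\]
This is exactly the row $i$ of $\sigma(r)\sigma(s)\left(\begin{smallmatrix} x_1\\ x_2\end{smallmatrix}\right)+\sigma(r)\delta(s)+\delta(r)s$. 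Every coefficient here lies in $A$ and multiplies on the left, so the expression is in the form $Ax_1+Ax_2+A$.

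On the other hand, (\ref{R2}) applied to $rs$ gives $\sigma(rs)\left(\begin{smallmatrix} x_1\\ x_2\end{smallmatrix}\right)+\delta(rs)$. By the uniqueness of coefficients with respect to the left $A$-basis $\{1,x_1,x_2,\dots\}$, comparing the coefficients of $x_1$ and $x_2$ yields $\sigma(rs)=\sigma(r)\sigma(s)$. Comparing the constant terms yields $\delta(rs)=\sigma(r)\delta(s)+\delta(r)s$. This proves (i) and (ii) at once.

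I expect no step to be a real obstacle. The only delicate point is bookkeeping: the matrix multiplication must appear in the order $\sigma(r)\sigma(s)$, and not in the reverse order. This order is forced because $r$ is moved past $x$ first, leaving $\sigma_{ij}(r)$ on the left of $x_js$. The other thing to state explicitly is that freeness as a left module, and not merely generation, is what allows the comparison of coefficients.
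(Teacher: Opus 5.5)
Your proof is correct and complete. The paper gives no proof of this lemma; it only quotes Lemma 1.7 of Zhang--Zhang. Your computation is the standard argument behind that result: you expand $x_i(rs)=(x_ir)s$ by applying (\ref{R2}) twice, then compare the coefficients of $x_1$, $x_2$ and $1$ using the left $A$-freeness of $B$ on $\{x_1^{\alpha_1}x_2^{\alpha_2}\}$. It gets the order $\sigma(r)\sigma(s)$ right and also covers the well-definedness, $\K$-linearity and $\sigma(1)=I_2$, $\delta(1)=0$ points that the bare citation leaves implicit.
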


\begin{ejemplo}
A trivial example of a right double Ore extension is obtained by taking $A=\K$. Specifically, if $B$ is a right double Ore extension of $\K$, then the following $\K$-isomorphism holds:
\[
B \cong \bigoplus_{n_1,n_2 \ge 0} \K\, x_1^{n_1} x_2^{n_2}
\]
Moreover, there is an isomorphism of $\K$-algebras
\[
\K\langle x_1, x_2\rangle / \langle r\rangle,
\]
where $r$ is the following relation:
\[
x_2 x_1 = p_{12} x_1 x_2 + p_{11} x_1^2 + a_1 x_1 + a_2 x_2 + a_3
\]
for certain $p_{12}, p_{11}, a_1, a_2, a_3 \in \K$ depending on $B$. The data associated with the double Ore extension can be identified explicitly
\[
\sigma = \begin{pmatrix} \mathrm{id}_{\K} & 0 \\ 0 & \mathrm{id}_{\K} \end{pmatrix}, 
\quad \delta = 0, 
\quad P = \{ p_{12}, p_{11} \}, 
\quad \tau = \{ a_1, a_2, a_3 \}.
\] If $p_{12} = 0$, then $B$ is a right double Ore extension, but not a double Ore extension, since condition (iib) in the Definition \ref{defore2} is not satisfied.  On the contrary, if $p_{12}\neq 0$, such $B$ turns out to be a double Ore extension.
\end{ejemplo}

The following are some notable examples of double Ore extensions introduced in \cite{Zhang}.
\begin{ejemplo}
For $A=\K[t]$, it is possible to classify all graded and connected right double extensions $B$ of $A$, considering $\deg(t)=\deg(x_1)=\deg(x_2)=1$. In this particular case, the relations (\ref{R1}) and (\ref{R2}) that determine such extensions take the following form:
\begin{align*}
x_2x_1 & = p_{12}x_1x_2 +p_{11}x_1^2+ atx_1+btx_2+ct^2\\
x_1t   & = dtx_1 + etx_2+ ft^2\\
x_2t    &= gtx_1+htx_2+it^2,  
 \end{align*}
 for certain $a$, $b$, $c$, $d$, $e$, $f$, $g$, $h$, $i \in \K$. Some particular cases of these double Ore extensions $B$ are presented below (see \cite[Example 4.1]{Zhang}):
\begin{itemize}
\item $B^1:=B^1(p,a,b,c)$ where $a$, $b$, $c$, $p \in \K$ and $p\neq 0$, $b\neq 0,1$. The relations for this $B^1$ are
\begin{align*}
x_2x_1 & = px_1x_2 +\dfrac{bc}{1-b}(pb-1)tx_1^2+ at^2,\\
x_1t   & = btx_1, \\
x_2t    &= b^{-1}tx_2+ct^2.  
 \end{align*}
The morphism $\sigma$ 
is given by  $\sigma(t) =  \left(\begin{smallmatrix}
bt && 0 \\
0 && b^{-1}t \\
\end{smallmatrix}\right)$. In this case $\sigma_{12}=\sigma_{21}=0$ and $\sigma_{11}$, $\sigma_{22}$ are automorphisms. In addition, the derivation $\delta$ is determined by $\delta(t) = \left(\begin{smallmatrix}
0 \\
ct^2 \\
\end{smallmatrix}\right)$. The parameter $P$ is $(p,0)$ and the tail is $\tau = \left\{\dfrac{bc}{1-b}(pb-1)t, 0,at^2\right\}$. 
 \item $B^2:=B^2(a,b,c)$, where $a$, $b$, $c\in \K$ and $b\neq 0$. In this instance
\begin{align*}
x_2x_1 & = -x_1x_2 + at^2,\\
x_1t   & = b^{-1}tx_2 +ct^2, \\
x_2t    &= btx_1+(-bc)t^2.
\end{align*}
The morphism $\sigma$ is determined by $\sigma(t) = \left(\begin{smallmatrix}
0 & b^{-1}t \\
bt & 0 \\
\end{smallmatrix}\right)$, hence $\sigma(t^n) = \left(\begin{smallmatrix}
0 & b^{-1}t^n \\
bt^n & 0 \\
\end{smallmatrix}\right)$, when $n$ is odd, and $\sigma(t^n) = \left(\begin{smallmatrix}
t^n & 0 \\
0 & t^n \\
\end{smallmatrix}\right)$ when $n$ is even. Unlike $B^1$, neither $\sigma_{12}$ nor 
$\sigma_{21}$ are  algebra morphisms. Furthermore, the derivation $\delta$ is determined by $\delta(t) = \left(\begin{smallmatrix}
ct^2 \\
-bct^2 \\
\end{smallmatrix}\right)$, the parameter $P$
 is $(-1,0)$, and the tail $\tau =\{0,0,at^2\}$.  It can be proven that $B^2(a,b,c)$ is not an iterated Ore extension of $A=\K[t]$ for a wide range of values for $(a,b,c)$, see \cite[pg. 2685]{Zhang}.
\end{itemize}
Other particular cases of such double Ore extensions of $\K[t]$ are considered in \cite{Zhang} --being these denoted by $B^3$ and $B^4$. Unlike $B^2$, the algebras $B^1$, $B^3$ and $B^4$ turn out to be iterated extensions of $A$.
\end{ejemplo}

\subsection*{The PI property in the double Ore extension B(h)}
\noindent
In \cite{Zhang}, the authors introduced the algebra $B(h)$, described below, and pointed out that this algebra is a novel example satisfying several homological properties of relevance, such as being Artin-Schelter regular, Koszul, Auslander regular, and a Cohen-Macaulay domain. Furthermore, they highlight that when $h$ is a root of unity, this algebra is a PI algebra.
\begin{ejemplo}[{\cite[Example 4.2]{Zhang}}] 
Let $h$ be a nonzero scalar in $\K$ and define
$B(h)$ as the graded $\K$-algebra generated by $x_1$, $x_2$, $y_1$ and $y_2$ subject to the following relations:
\begin{alignat*}{4}
x_2 x_1 &= {-x_1 x_2} & & & & \\
y_2 y_1 &= {-y_1 y_2} & & & & \\
y_1 x_1 &= h( & x_1 y_1 &+ x_2 y_1 &+ x_1 y_2 & \;) \\
y_1 x_2 &= h( & & & + x_1 y_2 & \;) \\
y_2 x_1 &= h( & & + x_2 y_1 & & \;) \\
y_2 x_2 &= h( & & - x_2 y_1 &- x_1 y_2 + x_2 y_2 & \;).
\end{alignat*}
\end{ejemplo}
By \cite[Proposition 4.6]{Zhang}, this $B(h)$ is a right double Ore extension of the following algebra
\[
A := \K_{-1}[x_1,x_2] =  \K\langle x_1, x_2\rangle \big/ \langle x_1x_2 + x_2x_1\rangle.\]

The following result was proved by Zhang in \cite[Corollary~4.8]{Zhang}. We include its proof for completeness, making explicit the finite-over-center argument used to establish the PI property.

\begin{teorema}[{\cite[Corollary 4.8]{Zhang}}]\label{coro4.8}
Let $B=B(h)$. Then $B$ is a PI algebra if and only if $h$ is a root of unity.
\end{teorema}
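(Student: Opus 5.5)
We will prove the two implications separately. The ``only if'' direction should follow from a quotient argument, and the ``if'' direction from exhibiting a central subalgebra over which $B$ is a finitely generated module. Throughout we use the obvious $\mathbb{Z}^2$-grading of $B=B(h)$ by ($x$-degree, $y$-degree). All six defining relations are bihomogeneous, so this grading is well defined.

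For the necessity, let $I$ be the two-sided ideal generated by $x_2$ and $y_2$, and reduce the six relations modulo $I$. The relations $x_2x_1=-x_1x_2$, $y_2y_1=-y_1y_2$, $y_1x_2=hx_1y_2$, $y_2x_1=hx_2y_1$ and $y_2x_2=h(-x_2y_1-x_1y_2+x_2y_2)$ become trivial, because both sides lie in $I$. Only $y_1x_1=hx_1y_1$ survives. Hence $B/I\cong \K_h[x_1,y_1]$, the quantum plane. A quotient of a PI algebra is PI. On the other hand, the quantum plane $\K_h[x,y]$ is PI only when $h$ is a root of unity. To justify this last point, we would note that if $h$ has infinite order then $\K_h[x,y]$ localizes to a quantum torus whose centre is $\K$, and whose division ring of fractions has infinite dimension over its centre, contradicting Posner--Kaplansky. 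So if $B$ is PI, then $h$ is a root of unity.

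For the sufficiency, the first step is to reduce to the case $h=1$ by a Zhang twist. Let $\phi$ be the graded automorphism of $B(1)$ given by $\phi(x_i)=hx_i$ and $\phi(y_i)=y_i$. It respects the relations of $B(1)$ because they are bihomogeneous. Define the twisted product $a\ast b=a\,\phi^{\deg_y(a)}(b)$. Then $x_i\ast x_j=x_ix_j$, $y_i\ast y_j=y_iy_j$ and $x_i\ast y_j=x_iy_j$, while $y_i\ast x_j=h\,y_ix_j$. Consequently the twisted algebra satisfies exactly the relations of $B(h)$, and comparing Hilbert series (both are double Ore extensions of $A$ with the same PBW basis) gives $B(h)\cong B(1)^{\phi}$. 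If $h^n=1$, then $\phi^n=\mathrm{id}$. For bihomogeneous elements whose $y$-degrees are divisible by $n$, the twisted and untwisted products agree. From this we would deduce that $B(h)$ and $B(1)$ share a large common subalgebra $S$ (the $y$-degree-$n\mathbb{Z}$ part, or a suitable central subalgebra inside it), over which both are finite modules.

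The remaining step, which we expect to be the main obstacle, is to show that $B(1)$ is finite over its centre. For this we would compute directly in $B(1)$ with the PBW basis $\{x_1^{a}x_2^{b}y_1^{c}y_2^{d}\}$. The plan is to verify that $x_1^2,x_2^2$ commute with $y_1,y_2$, and likewise $y_1^2,y_2^2$ with $x_1,x_2$, up to possibly passing to fourth powers. The matrix describing $y\otimes x\mapsto x\otimes y$ at $h=1$ is unipotent-like, so its squares should be scalar. We would first test this by computing $y_1x_1^2$ and $y_2x_2^2$ explicitly using the relations together with $x_2x_1=-x_1x_2$. If the squares are central, then $B(1)$ is a finitely generated module over $\K[x_1^2,x_2^2,y_1^2,y_2^2]$, spanned by the monomials with all exponents at most $1$. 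Combining this with the twist yields a central subalgebra of $B(h)$, generated by suitable powers $x_i^{2n}$ and $y_i^{2n}$, over which $B(h)$ is finite. A ring that is a finitely generated module over a commutative subring of its centre is PI, which completes the proof.
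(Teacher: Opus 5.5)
\paragraph{Necessity and the twist.} Your necessity argument is correct, and it is more direct than the paper's. Modulo the ideal generated by $x_2,y_2$, only $y_1x_1=hx_1y_1$ survives, so $B/I\cong\mathcal{O}_h(\K^2)$ and $h$ itself must be a root of unity. The paper instead passes to the subalgebra generated by $y_1$ and $u=x_1x_2$, where $y_1u=-h^2uy_1$, and first obtains that $-h^2$ is a root of unity. Your Zhang twist $B(h)\cong B(1)^{\phi}$ is also a legitimate reduction.

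\paragraph{The gap.} The problem is the step you flag as the main obstacle: the test you propose comes out negative. From the relations and $x_2x_1=-x_1x_2$ one gets
\[
y_1x_1^2=h^2\bigl((x_1^2+x_1x_2+x_2^2)\,y_1+(x_1^2-x_1x_2)\,y_2\bigr).
\]
By the PBW basis this is not $h^2x_1^2y_1$, so $x_1^2$ is not central even in $B(1)$. Higher powers do not help. In $B(1)$ the elements $s=x_1^2+x_2^2$ and $u^2=-x_1^2x_2^2$ are central, and $x_1^2$ is a root of $T^2-sT-u^2$. Hence $x_1^{2k}=\alpha_kx_1^2+\beta_k$, where $\alpha_k=\sum_{i=0}^{k-1}x_1^{2i}x_2^{2(k-1-i)}$ is central and nonzero, and so $[x_1^{2k},y_1]=\alpha_k[x_1^2,y_1]\neq 0$ for every $k$. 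When $h^n=1$, your twist does not change products involving an element of $y$-degree $0$ and $x$-degree divisible by $n$. So the same failure occurs in $B(h)$: $\K[x_i^{2n},y_i^{2n}]$ is not a central subalgebra, and your concluding step collapses.

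\paragraph{The missing idea.} You need the elements that move past the other pair of variables up to a scalar. Set $u=x_1x_2$, $s=x_1^2+x_2^2$, $v=y_1y_2$ and $t=y_1^2+y_2^2$. Then
\[
y_iu=-h^2uy_i,\qquad y_is=h^2sy_i,\qquad x_jv=-h^{-2}vx_j,\qquad x_jt=h^{-2}tx_j .
\]
Moreover $u^2,s$ are central in $\K_{-1}[x_1,x_2]$, and $v^2,t$ are central in $\K_{-1}[y_1,y_2]$. So if $h^\ell=1$, the elements $u^{2\ell},s^\ell,v^{2\ell},t^\ell$ are central in $B(h)$; this is exactly the paper's choice.

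Finiteness then follows by a chain of finite extensions. $\K_{-1}[x_1,x_2]$ is finite over $\K[x_1^2,x_2^2]$. That ring is finite over $\K[s,u^2]$ by the quadratic relation above, and $\K[s,u^2]$ is finite over $\K[s^\ell,u^{2\ell}]$. The same holds for the $y$'s, and $B(h)=\K_{-1}[x_1,x_2]\,\K_{-1}[y_1,y_2]$ by the PBW basis. With these elements you can work directly in $B(h)$, and the twist becomes unnecessary.
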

\begin{proof}
Suppose that $B$ is a PI algebra, and consider the subalgebra $\widetilde{B}$ of $B$ generated by $y_1$ and $u:=x_1 x_2$; thus $ \widetilde{B}$ is also a PI algebra. From the defining relations of $B$, it follows that
\begin{align*}
y_1\,u=(-h^2)\,u\,y_1.
\end{align*}

Observe that $\widetilde{B}$ is an Ore extension of $\K[u]$. Specifically,   $\widetilde{B}= \K[u][\,y_1;\sigma\,]$, with $\sigma(u)=(-h^2)u$ and $\delta= 0$. Proposition 2.5 in \cite{LeroyMatczuk2007} ensures that $\widetilde{B}$ is PI if and only if
$\sigma|_{Z(\K[u])}$ has finite order.  Since $Z(\K[u])=\K[u]$, this condition is equivalent to the existence of some $n\geq 1$ such that $\sigma^n=\mathrm{id}_{\K[u]}$. Applying induction on $n$, one obtains $\sigma^n(u)=\lambda^n u$, where $\lambda = -h^2$, so that $\sigma^n(u)=\lambda^n u=u$ holds if and only if $\lambda^n=1$; that is,  when
$\lambda=-h^2$ is a root of unity. This implies that $(-h^{2})^{n}=1$ for some $n\ge 1$; i.e., $h$ is a root of unity.\\
Conversely, suppose that $h^\ell=1$, for some $\ell\ge 1$.
Let
\[
u:=x_1x_2,\qquad s:=x_1^2+x_2^2,\qquad
v:=y_1y_2,\qquad t:=y_1^2+y_2^2.
\]
From the defining relations of $B$ the following holds:
\begin{align}\label{commutausvt}
\begin{split}
y_i\,u&=(-h^2)\,u\,y_i, \\
y_i\,s&=h^2\,s\,y_i,\\
x_j\,v&=(-h^{-2})\,v\,x_j,\\
x_j\,t&=(h^{-2})\,t\,x_j,
\end{split}
\end{align}
for $i,j\in \{1,2\}$.

Observe that $(x_1\pm x_2)^2=x_1^2+x_2^2=s$ since the cross terms vanish upon applying the relation $x_2x_1=-x_1x_2$. In addition, since $x_2x_1=-\,x_1x_2$, the element $u$ anticommutes with $x_1$ and $x_2$, so that  $u^2$ commutes with $x_1$ and $x_2$. Moreover, the element  $s$ also commutes with $x_1$ and $x_2$. Similarly,  since $y_2y_1=-\,y_1y_2$, the element $v$ anticommutes with $y_1$ and $y_2$,  from which it follows that $v^{2}$  commutes with  $y_1$ and  $y_2$. Finally, the element $t$ also commutes with $y_1$ and $y_2$. 

From (\ref{commutausvt}), and by induction on $n\in \Z^+$, the following relations are obtained:
  \[
    y_i\,u^{n}=(-h^{2})^{n}\,u^{n}y_i,
    \qquad
    y_i\,s^{n}=(h^{2})^{n}\,s^{n}y_i.
  \]
In particular, for $n=2\ell$, and using the fact that $h^{\ell}=1$,  it follows that
  \[
    y_i\,u^{2\ell}=u^{2\ell}y_i,
    \qquad
    y_i\,s^{\ell}=s^{\ell}y_i.
  \] 
Furthermore, since $u^{2}$ and $s$ commute with
 $x_1,x_2$, then $u^{2\ell}$ and $s^{\ell}$ also  commute with $x_1,x_2$. This shows that the elements   $u^{2\ell}$ and $s^{\ell}$  commute with all generators of $B$; that is, these elements belong to the center of $B$.  An analogous argument shows that $v^{2\ell}$ and $t^{\ell}$  also belong to the center of $B$.
 
Now, let
\[
U:=u^{2\ell},\qquad S:=s^\ell,\qquad V:=v^{2\ell},\qquad T:=t^\ell;
\]
then  $B$ is finitely generated as a right module over the subalgebra $\K[U,S,V,T]\subseteq Z(B)$.  

Indeed, if $I$ denotes the bilateral ideal of $B$ generated by $U$, $S$, $T$ and $V$, Observe that every monomial $x_1^{a_1} x_2^{a_2}$ is a $\K$-linear combination of elements of the form:
\[
u^{2i} s^j \quad \text{and} \quad u^{2i} s^j \, x_1 \quad (\text{or } x_2),
\]
where $i,j\geq 0$. Since $U=u^{2\ell}$  and $S=s^\ell$  belong to $I$, module $I$, the exponents $i$ and $j$ are bounded by $0\leq i, j<\ell$, leaving only finitely many monomials. It can also be proven that   every monomial $y_1^{b_1} y_2^{b_2}$ is a $\K$-linear combination of elements of the form:
\[
v^{2i} t^j \quad \text{and} \quad v^{2i} t^j \, y_1 \quad (\text{or } y_2),
\]
where $i,j\geq 0$. Thus, module $I$, every monomial in $y_1$, $y_2$ is a $\K$-linear combination of:
\[
v^{2i} t^j \quad \text{and} \quad v^{2i} t^j \, y_1 \quad (\text{or } y_2),
\]
with $0\leq i,j <\ell$.
Therefore, the following finite set
\[
\mathcal{B} :=
\left\{
(u^2)^r s^a x_1^{\varepsilon_1}x_2^{\varepsilon_2}
(v^2)^c t^d y_1^{\eta_1}y_2^{\eta_2}
\mid
0\leq r,a,c,d<\ell,\ \varepsilon_i,\eta_i\in\{0,1\}
\right\}
\] 
is a spanning set for the quotient algebra $B/I$. Hence $B/I$ is
finite-dimensional. \\
\\
Then, for each polynomial \(b \in B(h)\),
\[
b=\sum_{\alpha,\beta,\gamma,\delta} w_{\alpha\beta\gamma\delta}\,U^\alpha S^\beta V^\gamma T^\delta,
\;\;\; w_{\alpha\beta\gamma\delta} \in \K\!\left\langle \mathcal{B}\right\rangle
.
\] Under these conditions \cite[\S 13.1.13, Corollary]{mcconnell} implies that  $B(h)$ is a PI algebra.
\end{proof}


\section{Two-parameter quantum Heisenberg algebra} \label{Heisenbergdedosparametros}
The three-dimensional Heisenberg Lie algebra $\mathfrak{h}$ over $\K$ with basis $\{x,y,t\}$
subject to the relations 
$[x,y]=t$, $[x,t]=0$, and $[y,t]=0$ is a nilpotent Lie algebra that arises naturally in quantum mechanics in the context of the harmonic oscillator problem. More precisely, its enveloping algebra $U(\mathfrak{h})$ --the associative $\K$-algebra generated by $x$, $y$, $t$, and relations $xy - yx = t$, $xt = tx$, and  $yt = ty$-- admits a central element $t$ such that  $U(\mathfrak{h})/\langle t-1\rangle$ is isomorphic to the first Weyl algebra $A_1(\K)=\K\langle x,y \mid xy-yx = 1\rangle$. The so-called oscillator representation of this  algebra provides the standard algebraic solution to the quantum harmonic oscillator (e.g., \cite[Section 1]{ks}).
\\
The study of quantum analogs of $U(\mathfrak{h})$ has been an active area of research: a one-parameter deformation, denoted by $H_q$, was introduced and studied by Kirkman and Small \cite{ks}. For $q\in\K^*$, this $\K$-algebra $H_q$ is generated by $x$, $y$, and $t$, subject to the relations
\begin{align} \label{quantumonepar}
tx = q^{-1}xt,\qquad ty = qyt,\qquad yx - qxy = t.
\end{align}
They also proved that 
the quotient ring $H_q/\langle \Omega-1 \rangle$, for an appropriate central element $\Omega$, is isomorphic to the Hayashi 
$q$-analog of the Weyl algebra; see \cite{hayashi}.

The algebra $H_q$ is an iterated Ore extension of the form $\K[t][x;\sigma_x][y;\sigma_y,\delta_y]$, where \(\sigma_x(t)=qt\), \(\sigma_y(t)=q^{-1}t\), \(\sigma_y(x)=qx\),
\(\delta_y(t)=0\) and \(\delta_y(x)=t\). Proposition 1.2 in \cite{ks} claims that $H_q$ is a graded Noetherian domain of global dimension  three, as well as an Artin--Shelter regular algebra. Some of these results were subsequently extended to the two-parameter case by Gaddis \cite{Gaddis2016}, who introduced the algebra $H_{p,q}$, where $p,q\in \K^*$. This algebra ${H}_{p,q}$ is generated by $x$, $y$ and $t$, satisfying the relations below
\begin{align}\label{quantumtwopar}
tx = p^{-1}xt,\qquad ty = pyt,\qquad yx - qxy = t.
\end{align} 
The algebra $H_{p,q}$ can be presented as the iterated Ore extension
$$\K[t][x;\sigma_x][y;\sigma_y,\delta_y],
$$
where \(\sigma_x(t)=p\,t\), \(\sigma_y(t)=p^{-1}t\), \(\sigma_y(x)=q\,x\),
\(\delta_y(t)=0\) and \(\delta_y(x)=t\); see \cite[Proposition 3.4]{Gaddis2016}. Therefore ${H}_{p,q}$  is a prime Noetherian domain; moreover, by assigning degree one to the variables $x$ and $y$, and degree two to the variable $t$, then $H_{p,q}$ 
becomes a connected graded algebra.

\begin{observacion}
Let $\K$ be a field and $p$, $q\in \K^*$ with $q\neq p^{-1}$. The \textbf{($p$,$q$)-number} is defined as  
\begin{align}
\label{pqdef}
[n]_{p,q} = \frac{q^n - p^{-n}}{q - p^{-1}} 
= \sum_{i=0}^{n-1} q^i p^{-(n-i)}.
\end{align}
In the case where $p=q^{-1}$, expression (\ref{pqdef}) reduces to the following:
\begin{align}
[n]_{p,q}=p^{-n}\sum_{i=0}^{n-1}1=p^{-n}\,n.   
\end{align}
Additionally,
\begin{align}\label{obs2}
[n+1]_{p,q}
&=q^{n}+p^{-1}[n]_{p,q}.
\end{align}
\end{observacion}
These ($p$,$q$)-numbers satisfy the following useful property.
\begin{lema} If $p$ and $q$ are both primitive roots of unity whose orders  
$\operatorname{ord}(p)$ and $\operatorname{ord}(q)$ divide $n$, then $[n]_{p,q}=0$.
\end{lema}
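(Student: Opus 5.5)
The plan is to work with the closed-form expression for $\pqnum{n}$ in (\ref{pqdef}), which is available because the Remark defining the $(p,q)$-numbers assumes $q\neq p^{-1}$. I will keep that standing hypothesis throughout.

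First, from $\operatorname{ord}(q)\mid n$ we get $q^n=1$. Likewise $\operatorname{ord}(p)\mid n$ gives $p^n=1$, hence $p^{-n}=1$. The numerator in (\ref{pqdef}) is therefore $q^n-p^{-n}=1-1=0$. The denominator $q-p^{-1}$ is nonzero by hypothesis, so
\[
\pqnum{n}=\frac{q^n-p^{-n}}{q-p^{-1}}=0 .
\]

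As a cross-check that avoids division, I would also verify this from the sum form. Factoring $p^{-n}$ out of $\sum_{i=0}^{n-1}q^ip^{-(n-i)}$ gives
\[
\pqnum{n}=p^{-n}\sum_{i=0}^{n-1}(pq)^i .
\]
Set $\zeta:=pq$. Then $\zeta^n=p^nq^n=1$, and $\zeta\neq 1$ exactly because $q\neq p^{-1}$. The identity $(\zeta-1)\sum_{i=0}^{n-1}\zeta^i=\zeta^n-1=0$, together with $\zeta-1\neq 0$ in the field $\K$, forces the geometric sum to vanish.

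I expect no real computational obstacle. The one point needing care is the degenerate case $q=p^{-1}$. There the Remark gives $\pqnum{n}=p^{-n}n=n$ (since $p^n=1$), which vanishes only when $\operatorname{char}\K$ divides $n$. Since this section works in characteristic zero, the lemma must be read under the standing assumption $q\neq p^{-1}$, and I would state this explicitly in the proof. Primitivity of the roots of unity is not actually needed; only $p^n=q^n=1$ is used.
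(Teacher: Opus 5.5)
Your proposal is correct and matches the paper's proof, which is exactly your cross-check: set $\tau=pq$, note $\tau^n=1$ and $\tau\neq 1$ because $q\neq p^{-1}$, so $\sum_{i=0}^{n-1}\tau^i=0$ and hence $[n]_{p,q}=0$; your primary closed-form argument (numerator $q^n-p^{-n}=0$, denominator nonzero) is the computation the paper itself uses later to get $[mn]_{p,q}=0$ in the converse of the Heisenberg proposition. (A harmless aside: expanding $(q^n-p^{-n})/(q-p^{-1})$ actually gives $\sum_{i=0}^{n-1}q^ip^{-(n-1-i)}=p^{-(n-1)}\sum_{i=0}^{n-1}(pq)^i$, so the prefactor is $p^{-(n-1)}$ rather than the $p^{-n}$ you inherited from the Remark; this does not affect the vanishing.)
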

\begin{proof}
Let $r:=\operatorname{ord}(p)$ and $s:=\operatorname{ord}(q)$. Since
$r\mid n$ and $s\mid n$, it follows that $p^n=1$ and $q^n=1$. For
$\tau:=pq$, one obtains $\tau^n=(pq)^n=p^nq^n=1$. Moreover, the condition $q\neq p^{-1}$ implies that $\tau\neq 1$.
Therefore,
\[
\sum_{i=0}^{n-1}\tau^i=\frac{\tau^n-1}{\tau-1}=0.
\]
Thus,
\[
[n]_{p,q}
=
p^{-n}\sum_{i=0}^{n-1}(pq)^i
=
p^{-n}\sum_{i=0}^{n-1}\tau^i
=
p^{-n}\cdot 0
=
0.
\]
\end{proof}

\begin{lema}[{\cite[Lemma 3.7]{Gaddis2016} }]\label{lema3.7}
Given $n\in \Z^+$, the following identities hold in \(\Hpq\):
\begin{align}
yx^{n} &= q^{n}x^{n}y+\pqnum{n}\,x^{n-1}t, \label{eq:38}\\
y^{n}x &= q^{n}xy^{n}+\pqnum{n}\,ty^{\,n-1}. \label{eq:39}
\end{align}
\end{lema}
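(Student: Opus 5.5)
The plan is to prove both identities by induction on $n$, using only the defining relations \eqref{quantumtwopar} and the recursion \eqref{obs2} for the $(p,q)$-numbers. The commutation rules for $t$ are used in the forms
\[
tx=p^{-1}xt,\qquad yt=p^{-1}ty,
\]
where the second is obtained from $ty=pyt$ by multiplying by $p^{-1}$. Throughout, $\pqnum{n}$ is read via the closed form $\frac{q^n-p^{-n}}{q-p^{-1}}=\sum_{i=0}^{n-1}q^ip^{-(n-1-i)}$. This is the normalization that gives $\pqnum{1}=1$, which the base case needs, and it is the one compatible with \eqref{obs2}. The exponent in the sum as printed in \eqref{pqdef} appears to be shifted by one, and I would remark on this.

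\textbf{Base case.} For $n=1$, both \eqref{eq:38} and \eqref{eq:39} reduce to the relation $yx=qxy+t$, since $\pqnum{1}=1$.

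\textbf{Inductive step for \eqref{eq:38}.} Assume the identity holds for $n$ and multiply it on the right by $x$:
\[
yx^{n+1}=q^nx^n(yx)+\pqnum{n}x^{n-1}(tx)=q^nx^n(qxy+t)+p^{-1}\pqnum{n}x^{n}t .
\]
Collecting terms gives $q^{n+1}x^{n+1}y+\bigl(q^n+p^{-1}\pqnum{n}\bigr)x^nt$. By \eqref{obs2} the coefficient equals $\pqnum{n+1}$, which closes the induction.

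\textbf{Inductive step for \eqref{eq:39}.} Assume the identity holds for $n$ and multiply it on the left by $y$:
\[
y^{n+1}x=q^n(yx)y^n+\pqnum{n}(yt)y^{n-1}=q^n(qxy+t)y^n+p^{-1}\pqnum{n}ty^n .
\]
This equals $q^{n+1}xy^{n+1}+\pqnum{n+1}ty^n$, again by \eqref{obs2}.

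\textbf{Degenerate case $q=p^{-1}$.} When $q=p^{-1}$ the closed form is not defined, so $\pqnum{n}$ is taken to be the sum $\sum_{i=0}^{n-1}q^ip^{-(n-1-i)}$. In that form \eqref{obs2} still holds, so the same argument applies without change.

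The only real point of care is the normalization of $\pqnum{n}$ and checking that \eqref{obs2} holds for it. The rest is bookkeeping of the commutation of $t$ past $x$ and $y$.
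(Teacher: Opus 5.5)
Your proof is correct and follows the paper's route. The paper also argues by induction on $n$: it multiplies the inductive hypothesis by $x$ on the right and closes with the recursion \eqref{obs2}. You additionally write out the second identity by multiplying on the left by $y$ and using $yt=p^{-1}ty$, which the paper leaves as ``analogous''.

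Your normalization remark is also accurate. The printed sum in \eqref{pqdef} should read $\sum_{i=0}^{n-1}q^ip^{-(n-1-i)}$. As printed, it gives $\pqnum{1}=p^{-1}$, and \eqref{obs2} fails for it in general.
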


\begin{proof}
These identities are proved by induction on \(n\).\\ 
If \(n=1\), 
\[
yx=qxy+t=q^{1}x^{1}y+[1]_{p,q}\,x^{0}t,
\]
where $
[1]_{p,q}=\frac{q-p^{-1}}{q-p^{-1}}=1$.\\
Assume that \eqref{eq:38} holds for \(n=k\),
\[
yx^{k}=q^{k}x^{k}y+\pqnum{k}\,x^{k-1}t.
\]
Multiplying on the right by $x$ and applying (\ref{obs2}), one obtains
\begin{align*}
yx^{k+1}
&=\big(q^{k}x^{k}y+\pqnum{k}\,x^{k-1}t\big)x \\
&=q^{k}x^{k}(yx)+\pqnum{k}\,x^{k-1}(tx) \\
&=q^{k}x^{k}(qxy+t)+\pqnum{k}\,x^{k-1}(p^{-1}xt) \\   
&=q^{k+1}x^{k+1}y+q^{k}x^{k}t+p^{-1}\pqnum{k}\,x^{k}t \\
&=q^{k+1}x^{k+1}y+\big(q^{k}+p^{-1}\pqnum{k}\big)x^{k}t \\
&=q^{k+1}x^{k+1}y+\pqnum{k+1}\,x^{k}t.
\end{align*}
\medskip 
An analogous argument establishes the identity in \eqref{eq:39}.
\end{proof}

The PI property for the algebra $H_{p,q}$ is presented below.
\begin{proposicion}[{\cite[Proposition 3.10]{Gaddis2016}}] 
Let $p$, $q \in \K^*$ with $pq \neq 1$. The algebra $H_{p,q}$ is a PI algebra if and only if $p$ and $q$ are roots of unity.
\end{proposicion}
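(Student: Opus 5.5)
We prove the two implications separately. For necessity we restrict to subalgebras that are skew polynomial rings, exactly as in Theorem~\ref{coro4.8}. For sufficiency we exhibit a central subalgebra over which $H_{p,q}$ is a finitely generated module, and then apply \cite[\S 13.1.13, Corollary]{mcconnell}.

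\emph{Necessity.} Suppose $H_{p,q}$ is PI. Then the subalgebra generated by $t$ and $x$ is PI. From $tx=p^{-1}xt$ this subalgebra is the Ore extension $\K[t][x;\sigma]$ with $\sigma(t)=pt$. By \cite[Proposition 2.5]{LeroyMatczuk2007} it is PI if and only if $\sigma$ has finite order on $\K[t]$, that is, if and only if $p$ is a root of unity. For $q$, look at the subalgebra generated by $x$ and $y$. Its relation $yx-qxy=t$ involves $t$, so it is not immediately a skew polynomial ring. Two ways around this:
\begin{itemize}
\item[(a)] Localize at the normal Ore set $\{t^k\}$. Then $u:=xy$ satisfies $tu=ut$, and from $yx=qxy+t$ one computes the commutation of $y$ with $u$, which gives a $q$-skew relation involving $t$. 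This is messy.
\item[(b)] Pass to the factor by the two-sided ideal generated by $t$. Since $t$ is normal, $\langle t\rangle = tH_{p,q}$. The quotient $H_{p,q}/\langle t\rangle$ is $\K_q[x,y]$, the quantum plane with $yx=qxy$.
\end{itemize}
We use (b), because factor rings of PI rings are PI. The quantum plane $\K_q[x,y]=\K[x][y;\sigma]$ with $\sigma(x)=qx$ is PI only if $q$ is a root of unity, again by \cite[Proposition 2.5]{LeroyMatczuk2007}. The only check needed is that the quotient really is the quantum plane, which follows from the PBW basis $\{x^ay^bt^c\}$ of the iterated Ore extension.

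\emph{Sufficiency.} Let $p,q$ be roots of unity and let $n$ be a common multiple of $\operatorname{ord}(p)$ and $\operatorname{ord}(q)$. We use the preceding lemma on $(p,q)$-numbers with the hypothesis $pq\neq 1$; note that $q\neq p^{-1}$ is exactly this. We show that $x^n$, $y^n$ and $t^n$ are central.
\begin{itemize}
\item[(1)] By Lemma~\ref{lema3.7}, $yx^n=q^nx^ny+\pqnum{n}x^{n-1}t=x^ny$. Also $tx^n=p^{-n}x^nt=x^nt$. Hence $x^n$ is central.
\item[(2)] By \eqref{eq:39} with the same $n$, $y^nx=xy^n$. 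Also $y^n$ commutes with $t$ because $p^n=1$. Hence $y^n$ is central.
\item[(3)] $t^n$ commutes with $x$ and $y$ since $p^{\pm n}=1$. Hence $t^n$ is central.
\end{itemize}
Take the central subalgebra $C=\K[x^n,y^n,t^n]$. By the PBW basis $\{x^ay^bt^c\}$, the algebra $H_{p,q}$ is generated as a $C$-module by the finitely many monomials $x^ay^bt^c$ with $0\le a,b,c<n$. Indeed, $x^{a+kn}y^{b+ln}t^{c+mn}=x^ay^bt^c\,(x^n)^k(y^n)^l(t^n)^m$, since the factors $(x^n)^k$, $(y^n)^l$, $(t^n)^m$ are central and can be moved to the right. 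Therefore $H_{p,q}$ is finite over its center, and \cite[\S 13.1.13, Corollary]{mcconnell} gives that it is PI.

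The main obstacle is the vanishing $\pqnum{n}=0$. This is exactly where $pq\neq1$ enters: if $pq=1$, then $\pqnum{n}=p^{-n}n$, which is nonzero in characteristic zero, so $x^n$ would fail to be central. The second point to verify carefully is the identification of $H_{p,q}/\langle t\rangle$ with the quantum plane in the necessity argument.
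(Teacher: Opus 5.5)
Your proposal is correct. The sufficiency half and the $q$-part of necessity follow the paper's proof essentially as written. The paper also passes to $H_{p,q}/tH_{p,q}\cong\mathcal{O}_q(\K^2)$. It also shows that suitable powers of $x$, $y$, $t$ are central, using Lemma~\ref{lema3.7} and the vanishing of the $(p,q)$-number (which is where $pq\neq 1$ enters), and then applies \cite[\S 13.1.13, Corollary]{mcconnell}. The only cosmetic difference is that you use a single common multiple $n$ for all three generators, while the paper uses $mn$ for $x,y$ and $\operatorname{ord}(p)$ for $t$.

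You genuinely differ in how you show that $p$ is a root of unity. The paper introduces $\theta=(1-pq)yx-t$ and checks by direct computation that $\theta x=qx\theta$, $\theta y=q^{-1}y\theta$ and $\theta t=t\theta$. It then identifies the factor algebra $H_{p,q}/\theta H_{p,q}$ with $\mathcal{O}_{p^{-1}}(\K^2)$. You instead observe that $\K\langle x,t\rangle$ is the first stage $\K[t][x;\sigma_x]$ of the iterated Ore extension, so it is already a quantum plane with parameter $p$, and you restrict to this subalgebra.

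Your route is shorter and needs no normality computation. The paper's route costs more but yields a second normal element. It also gives a symmetric picture in which both $q$ and $p^{-1}$ occur as parameters of quantum-plane factor algebras, which is structural information beyond the PI question.

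A minor point: the isomorphism $H_{p,q}/tH_{p,q}\cong\mathcal{O}_q(\K^2)$ follows directly from the presentation by setting $t=0$, so the PBW basis is not needed for that check.
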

\begin{proof}
Suppose that $H:=H_{p,q}$ is a PI algebra. Since $t$ is a normal element in $H$, it follows that $tH$ is a two-sided ideal and $H/tH$ is a quotient algebra. In particular,  $\overline{y}\, \overline{x}=q\, \overline{x}\,\overline{y}$; that is, $H/tH$ is isomorphic to the quantum plane with parameter $q$,
\[
H/tH\ \cong\ \mathcal O_q(\K^2):=\K\langle x,y\mid yx=q\,xy\rangle.
\] 
Since $H$ is a PI algebra, the quotient $H/tH\ \cong\ \mathcal O_q(\K^2)$ is also a PI algebra\footnote{It can be proved that every homomorphic image of a PI algebra is PI; see  \cite{DrenskyFormanek2004}.}. Nevertheless, by \cite[Section \S 7.1]{DeConciniProcesi1993}, the algebra $\mathcal O_q(\K^2)$ satisfies the PI property if and only if $q$ is a root of unity.

On the other hand, if $\theta:=(1-pq)\,yx-t$, then $\theta$ is a normal element in $H$. More explicitly, the following relations hold:
\begin{align*}
\theta x = q\,x\theta,\qquad  \theta y = q^{-1}\,y\theta, \qquad \theta t = t\theta.  
\end{align*}
The proof of the second identity is presented below. The remaining ones follow by analogous arguments, see (\ref{quantumtwopar}).

\begin{align*}
\theta y
&= \big((1-pq)yx - t\big)y 
 = (1-pq)yxy - ty \\
&= (1-pq)\big(qxy^2 + ty\big) - ty  \\
&= q(1-pq)xy^2 + \big((1-pq)-1\big)ty \\
&= q(1-pq)xy^2 - pq\,ty \\
&= q(1-pq)xy^2 - pq\,ty +ty -pyt \qquad(ty=pyt) \\
&= q(1-pq)xy^2 +(1-pq)\,ty -pyt\\
&= q^{-1}(1-pq)( q^{2}xy^2 + q ty) -pyt \\
&= q^{-1}(1-pq)( q^{2}xy^2 + q ty) +(1-pq-1)q^{-1}yt \\
&= q^{-1}(1-pq)(q^{2}xy^2+qty +yt) - q^{-1}yt \\
&= q^{-1}(1-pq)(y^{2}x) - q^{-1}yt \qquad ( y^2x=q^2xy^2+qty+yt)\\
&= q^{-1}y\big((1-pq)yx-t\big) \\
&= q^{-1} y\theta.
\end{align*}

Consequently, $\theta H$ is a two-sided ideal and $H/\theta H$ is an algebra where $\overline{t}=(1-pq) \overline{y}\,\overline{x}$.  Moreover, from the relation $yx=qxy+t$ over $H$ the following in  $H/\theta H$ is obtained
\begin{align*}
0 
&= \bar y \bar x - q\,\bar x \bar y - \bar t \\
&= \bar y \bar x - q\,\bar x \bar y - (1-pq)\,\bar y \bar x 
\\
&= pq\,\bar y \bar x - q\,\bar x \bar y 
\\
&= pq\left(\bar y \bar x - p^{-1}\bar x \bar y\right). 
\end{align*}
This implies that $\bar y\,\bar x=p^{-1}\bar x\,\bar y$, so that the quotient $H/\theta H$ is isomorphic to the quantum plane with parameter $p^{-1}$,
\[
H/\theta H\ \cong\ \mathcal O_{p^{-1}}(\K^2).\] 
An argument analogous to the one employed above for $\mathcal{O}_{q}(\K^2)$ allows one to conclude that $p$ is also a root of unity.\\
\\
Conversely, suppose that $p$ and $q$ are roots of unity with
$n:=\operatorname{ord}(p)$ and  $m:=\operatorname{ord}(q)$, respectively. Using the relations $tx=p^{-1}xt$ and $ty=pyt$ in $H$, together with the fact that $p^n=1$, an induction argument shows that  
$t^{n}x=xt^{n}$ and $t^{n}y=yt^{n}$; that is, the element $t^n$ commutes with both   $x$ and $y$. Analogously, one obtains
$tx^{mn}=p^{-mn}x^{mn}t$ and $ty^{mn}=p^{mn}y^{mn}t$. Since  $p^{mn}=(p^{n})^{m}=1$, then $x^{mn}t=tx^{mn}$ and $y^{mn}t=ty^{mn}$; that is, the elements $x^{mn}$ and $y^{mn}$ commute with $t$. In addition, for $r\in \Z^+$, the Lemma \ref{lema3.7} insures that
\begin{align*} 
yx^{r}=q^{r}x^{r}y+[r]_{p,q}\,x^{r-1}t  \text{\quad and \quad}
y^{r}x=q^{r}xy^{r}+[r]_{p,q}\,ty^{\,r-1}.
\end{align*} 
Setting $r=mn$ and using the fact that $q^{mn}=(q^{m})^{n}=1$, one obtains 
\[
yx^{mn}=x^{mn}y+[mn]_{p,q}\,x^{mn-1}t,
  \;
  y^{mn}x=xy^{mn}+[mn]_{p,q}\,t\,y^{mn-1}.
\] 
If $q\neq p^{-1}$, the ($p$, $q$)-number  $[mn]_{p,q}$ satisfies
\begin{align*}
[mn]_{p,q}=\frac{q^{\,mn}-p^{-mn}}{\,q-p^{-1}\,}
=\frac{1-1}{\,q-p^{-1}\,}=0.
\end{align*}
Thus, $yx^{mn}=x^{mn}y$ and $y^{mn}x=xy^{mn}$. Hence,  the power $x^{mn}$ commutes with  $y$, while $y^{mn}$ commutes with $x$.

From the above, it follows that $x^{mn}$, $y^{mn}$, and $t^{n}$ are central elements in $H$; that is,
$\K[x^{mn},y^{mn},t^{n}]\subseteq Z(H)$. In particular,   every monomial $x^{a}y^{b}t^{c}$ can be written as
\[
(x^{mn})^{u}(y^{mn})^{v}(t^{n})^{w}\cdot x^{i}y^{j}t^{\ell},
\quad
0\le i,j<mn,\;\;0\le \ell<n.
\]
Therefore, the algebra $H$ is finitely generated as a module over $\K[x^{mn},y^{mn},t^{n}]$ by the finite set $\{\,x^{i}y^{j}t^{\ell}\,\}$. Under these conditions \cite[\S 13.1.13, Corollary]{mcconnell} implies that $H$ is a PI algebra.
\end{proof}

\begin{corolario} 
Let $q\in k^*$ and assume that $q^2\neq 1$. The quantum  Heisenberg enveloping algebra  
$H_q$ in (\ref{quantumonepar}) is a PI algebra if and only if  $q$ is a root   of unity.
\end{corolario}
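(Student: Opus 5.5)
Comparing the defining relations (\ref{quantumonepar}) of $H_q$ with the relations (\ref{quantumtwopar}) of $H_{p,q}$ shows that $H_q$ is the two-parameter algebra with $p=q$: $tx=q^{-1}xt$, $ty=qyt$ and $yx-qxy=t$ are exactly (\ref{quantumtwopar}) for $p=q$. The first step is therefore the identification
\[
H_q = H_{q,q}
\]
as $\K$-algebras; it holds on the nose, since the generators and relations coincide. The Corollary will then follow from the preceding Proposition applied with $p=q$.

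Next I check the hypothesis of the Proposition. It requires $pq\neq 1$, which for $p=q$ reads $q^2\neq 1$. This is exactly the assumption in the Corollary. The same condition gives $q\neq p^{-1}$, which is what the Proposition needs to define the $(p,q)$-numbers $[n]_{q,q}$ and to obtain the vanishing $[mn]_{q,q}=0$ in the converse direction. Hence the Proposition applies: $H_{q,q}$ is PI if and only if $p=q$ and $q$ are roots of unity, that is, if and only if $q$ is a root of unity.

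There is no real obstacle here. The only point requiring care is that $pq\neq 1$ is used in both directions of the Proposition, and $q^2\neq 1$ covers both. In the forward direction it makes $\theta=(1-q^2)yx-t$ a nontrivial normal element, so that $H/\theta H\cong\mathcal O_{q^{-1}}(\K^2)$. In the converse it is needed for the $(p,q)$-number argument. I would also note explicitly that the special value $q^2=1$ is excluded only because the Proposition does not cover it, not because the statement fails there.
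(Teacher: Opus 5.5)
Your core argument is the paper's: set $p=q$ in (\ref{quantumtwopar}), note that $pq\neq 1$ becomes $q^2\neq 1$, and apply the Proposition for $H_{p,q}$. That part is correct.

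Your closing remark, however, is false. The hypothesis $q^2\neq 1$ is not an artifact of the Proposition, because the statement genuinely fails at $q=\pm 1$ under the standing characteristic-zero assumption of Section~\ref{Heisenbergdedosparametros}.

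\begin{itemize}
\item For $q=1$: the quotient $H_1/\langle t-1\rangle$ is presented by $yx-xy=1$, so it is the first Weyl algebra $A_1(\K)$, which is not PI. Hence $H_1\cong U(\mathfrak h)$ is not PI, even though $1$ is a root of unity.
\item For $q=-1$: $t^2$ is central and regular in $H_{-1}$. In the central localization $H_{-1}[t^{-2}]$, the elements $x$ and $yt^{-1}$ satisfy $x(yt^{-1})-(yt^{-1})x=(xy+yx)t^{-1}=1$, so they generate a copy of $A_1(\K)$. Since central localizations of PI algebras are PI, $H_{-1}$ is not PI either.
\end{itemize}

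Your account of where the hypothesis is used is also inaccurate. The forward implication never needs $q^2\neq 1$. With $p=q$, the quotient $H_q/tH_q\cong\mathcal O_q(\K^2)$ already forces $q$ to be a root of unity, so the $\theta$-quotient is redundant. Moreover, $\theta$ stays normal and nonzero when $pq=1$; it is simply $-t$. The hypothesis enters only in the converse. There, for $q^2=1$ one has $[n]_{q,q}=q^{-n}n\neq 0$, and the argument producing central powers $x^{mn}$ and $y^{mn}$ breaks down, as the examples above show it must.
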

\begin{proof}
Setting $p=q$  yields $H_{p,q}=H_q$. Thus, the result follows by applying the preceding proposition. 
\end{proof}


\section{Quantum Matrix Algebra}\label{matrices2p}

The two-parameter quantum matrix algebras introduced by Takeuchi in \cite{Takeuchi1990} arise from the observation that the then-known one-parameter quantizations of the general linear group $GL(n)$ — namely, the standard quantization dual to the Drinfeld-Jimbo enveloping algebra $U_q(\mathfrak{gl}(n))$, and the Dipper-Donkin quantization — could be unified and generalized within a single two-parameter framework. This algebra is defined as follows: let $\K$ be a field and $\alpha$, $\beta\in \K^*$ be two quantum parameters. The two-parameter quantum matrix algebra 
$M_n(\alpha,\beta)$ is the associative $\K$-algebra generated by $\{X_{ij}\}_{1\le i,j\le n}$, subject to the relations:
\[
\begin{aligned}
X_{ij}X_{ik} &= \alpha\, X_{ik}X_{ij} && \text{if } k<j,\\
X_{ij}X_{kj} &= \beta\, X_{kj}X_{ij} && \text{if } k<i,\\
X_{ij}X_{st} &= \beta\alpha^{-1}\, X_{st}X_{ij} && \text{if } s<i,\; j<t,\\
X_{ij}X_{st} - X_{st}X_{ij} &= (\beta-\alpha^{-1})\, X_{sj}X_{it} && \text{if } s<i,\; t<j.
\end{aligned}
\] 
It can be shown that the algebra  $M_n(\alpha,\beta)$ is Artin-Schelter regular algebra, with global  dimension  and  Gelfand--Kirillov dimension both equal to $n^2$, see \cite[Theorem 2]{ArtinSchelterTate1991}.

In the remainder of this section, the case $n=2$ will be considered.
\begin{definicion}
Let $\alpha,\beta \in \K^*$. 
The \textbf{two-parameter quantum matrix algebra} $M_2(\alpha,\beta)$ is the associative $\K$-algebra generated by the entries of the matrix
\begin{align}\label{qmatrix}
X=\begin{pmatrix}
X_{11} & X_{12} \\
X_{21} & X_{22}
\end{pmatrix},
\end{align}
subject to the relations:
\begin{align*}
X_{12}X_{11} &= \alpha X_{11}X_{12} 
&\qquad X_{22}X_{21} &= \alpha X_{21}X_{22} \\
X_{21}X_{11} &= \beta X_{11}X_{21} 
&\qquad X_{22}X_{12} &= \beta X_{12}X_{22} \\
X_{21}X_{12} &= \beta \alpha^{-1} X_{12}X_{21} 
&\;\; X_{22}X_{11} - X_{11}X_{22} &= (\beta - \alpha^{-1})X_{12}X_{21}
\end{align*}
\end{definicion}
Note that if $\alpha=\beta=1$, then the entries of the matrix in (\ref{qmatrix}) commute.

Consequently,  $M_2(1,1)\cong \K[X_{11},X_{12},X_{21},X_{22}]$.
The algebra $M_2(\alpha,\beta)$ admits a presentation as an iterated Ore extension, considering the following ordering of the variables 
$X_{11},X_{12},X_{21},X_{22}$, namely
\[
\K[X_{11}]\,[X_{12},\sigma_{12}]\,[X_{21},\sigma_{21}]\,[X_{22},\sigma_{22},\delta_{22}],
\]
where $\sigma_{12}$, $\sigma_{21}$, and $\sigma_{22}$ are $\K$-linear automorphisms and $\delta_{22}$ is a $\sigma_{22}$-derivation. These morphisms satisfy
\[
\begin{aligned}
\sigma_{12}(X_{11})&=\alpha X_{11},        & \sigma_{21}(X_{11})&=\beta X_{11},        & \sigma_{21}(X_{12})&=\beta\alpha^{-1} X_{12},\\
\sigma_{22}(X_{11})&=X_{11},               & \sigma_{22}(X_{12})&=\beta X_{12},        & \sigma_{22}(X_{21})&=\alpha X_{21},\\
\delta_{22}(X_{11})&=(\beta-\alpha^{-1})X_{12}X_{21}, & \delta_{22}(X_{12})&=0, & \delta_{22}(X_{21})&=0.
\end{aligned}
\]
In this way, $M_2(\alpha,\beta)$ is a prime Noetherian domain and the family of monomials $\{\,X_{11}^a X_{12}^b X_{21}^c X_{22}^d \;\mid \; a,b,c,d\ge 0\,\}
$ forms a $\K$-basis for this algebra. Note that the defining relations of the algebra guarantee that $X_{12}$ and $X_{21}$ are normal elements.

\begin{lema}[{\cite[Lemma 2.2]{BeraMukherjee2025}}]\label{lemam2}
The following identities hold in 
$M_2(\alpha,\beta)$: 
\begin{align}
X_{22}^{k}X_{11}&=X_{11}X_{22}^{k}
+\alpha^{-1}\!\left((\alpha\beta)^{k}-1\right)\,X_{12}X_{21}X_{22}^{k-1}, \label{eq:lem22-1}\\[2mm]
X_{22}X_{11}^{k}&=X_{11}^{k}X_{22}
+\beta\!\left(1-(\alpha\beta)^{-k}\right)\,X_{12}X_{21}X_{11}^{k-1}. \label{eq:lem22-2}
\end{align}
for all $k\geq 1$.
\end{lema}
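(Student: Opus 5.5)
Both identities will be proved by induction on $k$, using only the defining relations of $M_2(\alpha,\beta)$. The whole argument rests on two commutation facts for the monomial $X_{12}X_{21}$:
\[
X_{22}\,X_{12}X_{21}=\alpha\beta\,X_{12}X_{21}X_{22},\qquad X_{11}\,X_{12}X_{21}=(\alpha\beta)^{-1}X_{12}X_{21}X_{11}.
\]
The first follows from $X_{22}X_{12}=\beta X_{12}X_{22}$ and $X_{22}X_{21}=\alpha X_{21}X_{22}$. The second follows from $X_{11}X_{12}=\alpha^{-1}X_{12}X_{11}$ and $X_{11}X_{21}=\beta^{-1}X_{21}X_{11}$, which are the first and third defining relations rewritten. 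Iterating the second fact gives $X_{11}^{k}X_{12}X_{21}=(\alpha\beta)^{-k}X_{12}X_{21}X_{11}^{k}$.

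\emph{Base case.} For $k=1$ both formulas reduce to the defining relation $X_{22}X_{11}=X_{11}X_{22}+(\beta-\alpha^{-1})X_{12}X_{21}$. This holds because
\[
\alpha^{-1}(\alpha\beta-1)=\beta-\alpha^{-1}=\beta\bigl(1-(\alpha\beta)^{-1}\bigr).
\]

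\emph{Inductive step for \eqref{eq:lem22-1}.} Write $c_k:=\alpha^{-1}((\alpha\beta)^k-1)$. Multiply the identity for $k$ on the left by $X_{22}$. In the first term, replace $X_{22}X_{11}$ using the defining relation. In the second term, move $X_{22}$ past $X_{12}X_{21}$ using the first commutation fact. This yields
\[
X_{22}^{k+1}X_{11}=X_{11}X_{22}^{k+1}+\bigl((\beta-\alpha^{-1})+\alpha\beta\,c_k\bigr)X_{12}X_{21}X_{22}^{k}.
\]
It then remains to check that $(\beta-\alpha^{-1})+\alpha\beta c_k=c_{k+1}$, which is a one-line computation: $\alpha^{-1}\bigl[\alpha\beta-1+(\alpha\beta)^{k+1}-\alpha\beta\bigr]=\alpha^{-1}\bigl((\alpha\beta)^{k+1}-1\bigr)$.

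\emph{Inductive step for \eqref{eq:lem22-2}.} Write $d_k:=\beta(1-(\alpha\beta)^{-k})$. Multiply the identity for $k$ on the right by $X_{11}$. Expand $X_{11}^kX_{22}X_{11}$ via the defining relation, and then move $X_{11}^k$ past $X_{12}X_{21}$ using the iterated second fact. This yields the recursion $d_{k+1}=d_k+(\beta-\alpha^{-1})(\alpha\beta)^{-k}$. The closed form satisfies it, since
\[
d_{k+1}-d_k=\beta(\alpha\beta)^{-k}\bigl(1-(\alpha\beta)^{-1}\bigr)=(\alpha\beta)^{-k}(\beta-\alpha^{-1}).
\]

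The only delicate point is bookkeeping: one must use the defining relations in the correct direction, for instance $X_{11}X_{12}=\alpha^{-1}X_{12}X_{11}$ rather than $\alpha X_{12}X_{11}$, since this is what produces the factor $\alpha\beta$ in one identity and $(\alpha\beta)^{-1}$ in the other. Once the two commutation facts for $X_{12}X_{21}$ are established, both inductions are routine.
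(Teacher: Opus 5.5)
Your proof is correct and follows the paper's route for the first identity: induction on $k$, multiplying on the left by $X_{22}$, using $X_{22}X_{12}X_{21}=\alpha\beta\,X_{12}X_{21}X_{22}$, and checking the same coefficient recursion. The paper only says the second identity is "analogous"; your version of it (right multiplication by $X_{11}$ together with $X_{11}^{k}X_{12}X_{21}=(\alpha\beta)^{-k}X_{12}X_{21}X_{11}^{k}$) fills that in correctly, and your recursion for $d_k$ checks out.
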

\begin{proof}
The identity (\ref{eq:lem22-1}) will be proved by induction on $k$. The proof of the second identity is analogous. For the base case, 
\[
X_{22}X_{11} = X_{11}X_{22} + (\beta - \alpha^{-1})X_{12}X_{21},
\]
with $\alpha^{-1}\left((\alpha \beta)^{1} - 1\right) 
= \alpha^{-1}(\alpha \beta - 1) 
= \beta - \alpha^{-1}$. 
Thus \eqref{eq:lem22-1} hold for $k=1$. Now, assume that \eqref{eq:lem22-1} holds for $k\ge1$; then,  
\begin{align*}
X_{22}^{k+1}X_{11}
&=X_{22}\big(X_{22}^{k}X_{11}\big) \\
&=X_{22}\!\Big(X_{11}X_{22}^{k} 
+\alpha^{-1}\!\big((\alpha\beta)^{k}-1\big)X_{12}X_{21}X_{22}^{k-1}\Big) \\
&=(X_{22}X_{11})X_{22}^{k} 
+\alpha^{-1}\!\big((\alpha\beta)^{k}-1\big)\,X_{22}(X_{12}X_{21})X_{22}^{k-1} \\
&=\big(X_{11}X_{22}+(\beta-\alpha^{-1})X_{12}X_{21}\big)X_{22}^{k}+\alpha^{-1}\!\big((\alpha\beta)^{k}-1\big)\,(\alpha\beta)\,X_{12}X_{21}X_{22}^{k} \\
&=X_{11}X_{22}^{k+1}+\Big[(\beta-\alpha^{-1})+\alpha^{-1}\!\big((\alpha\beta)^{k}-1\big)(\alpha\beta)\Big]X_{12}X_{21}X_{22}^{k} \\
&=X_{11}X_{22}^{k+1}
+\alpha^{-1}\!\big((\alpha\beta)^{k+1}-1\big)\,X_{12}X_{21}X_{22}^{k}.
\end{align*} 

In the last line, use has been made of the fact that:
\begin{align*}
(\beta-\alpha^{-1})+\alpha^{-1}\left((\alpha\beta)^{k}-1\right)(\alpha\beta)
&= -\alpha^{-1}+\alpha^{-1}(\alpha\beta)^{k+1} = \alpha^{-1}\left((\alpha\beta)^{k+1}-1\right).
\end{align*}
\end{proof}

\begin{proposicion}[{\cite[Corollary 2.3]{BeraMukherjee2025}}]\label{centro} If $\alpha$ and $\beta$ are $l$-th roots of unity, then $X_{11}^l$, $X_{12}^l$, $X_{21}^l$ and $X_{22}^l$  are central elements in $M_2(\alpha,\beta)$.
\end{proposicion}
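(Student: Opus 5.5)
It suffices to check that each $X_{ij}^l$ commutes with the four generators $X_{11},X_{12},X_{21},X_{22}$. The plan is to handle the ``skew-commuting'' pairs by a direct induction and the single non-skew pair $(X_{11},X_{22})$ by Lemma \ref{lemam2}.

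First, from each defining relation of the form $ab=\gamma\, ba$ with $\gamma\in\{\alpha,\beta,\beta\alpha^{-1}\}$ we get, by induction on $k$, the identities $a^k b=\gamma^k b a^k$ and $a b^k=\gamma^k b^k a$. Since $\alpha^l=\beta^l=1$, also $(\beta\alpha^{-1})^l=1$. Taking $k=l$ then shows the following:
\begin{itemize}
\item $X_{11}^l$ commutes with $X_{12},X_{21}$;
\item $X_{12}^l$ commutes with $X_{11},X_{21},X_{22}$;
\item $X_{21}^l$ commutes with $X_{11},X_{12},X_{22}$;
\item $X_{22}^l$ commutes with $X_{12},X_{21}$.
\end{itemize}
Each element trivially commutes with itself. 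The relations involving $X_{22}$ and $X_{12}$ (coefficient $\beta$), and $X_{22}$ and $X_{21}$ (coefficient $\alpha$), are of this skew form. So all pairs except $(X_{11}^l,X_{22})$ and $(X_{22}^l,X_{11})$ are handled.

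For the remaining pairs, apply Lemma \ref{lemam2} with $k=l$. Identity (\ref{eq:lem22-1}) gives
\[
X_{22}^lX_{11}=X_{11}X_{22}^l+\alpha^{-1}\big((\alpha\beta)^l-1\big)X_{12}X_{21}X_{22}^{l-1},
\]
and $(\alpha\beta)^l=\alpha^l\beta^l=1$ kills the correction term. Similarly, (\ref{eq:lem22-2}) gives $X_{22}X_{11}^l=X_{11}^lX_{22}$, because $1-(\alpha\beta)^{-l}=0$.

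Combining the two steps, each $X_{ij}^l$ commutes with every generator and is therefore central. The only nontrivial step is the pair $(X_{11},X_{22})$, and it is entirely absorbed by Lemma \ref{lemam2}. The rest is bookkeeping: for each of the six relations, check which power of which parameter appears, and confirm that it is an $l$-th power of a root of unity of order dividing $l$.
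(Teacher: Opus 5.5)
Your proposal is correct and follows essentially the same route as the paper. The skew relations are iterated by induction and specialized at exponent $l$, using $\alpha^l=\beta^l=(\beta\alpha^{-1})^l=1$, and the single non-skew pair $(X_{11},X_{22})$ is handled by Lemma~\ref{lemam2} with $k=l$, where $(\alpha\beta)^{\pm l}=1$ kills the correction terms; your bookkeeping of which relation yields which commutation is accurate and slightly more systematic than the paper's list of identities.
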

\begin{proof}
To establish this, it suffices to show that $X_{ij}^{l}$  commutes with each of the generating elements of $M_2(\alpha,\beta)$, for $1\leq i,j\leq 2$. An induction argument on $m$ yields the following identities:
\[
X_{11}X_{12}^m=\alpha^{-m}X_{12}^mX_{11},\qquad
X_{22}X_{12}^m=\beta^{\,m}X_{12}^mX_{22},\]
\[
X_{21}X_{12}^m=(\beta \alpha ^{-1})^{m}X_{12}^mX_{21}.
\] 
Setting $m=l$ and using the hypotheses on $\alpha$ and $\beta$, the fact that 
 $X_{12}^l \in Z(M_2(\alpha,\beta))$ follows.
Similarly, for the element $X_{21}$, one has:
\[
X_{11}X_{21}^m=\beta^{-m}X_{21}^mX_{11},\qquad
X_{22}X_{21}^m=\alpha^{m}X_{21}^mX_{22},\]
\[ X_{12}X_{21}^m=(\alpha \beta^{-1})^{m}X_{21}^mX_{12}.
\] 
Thus, taking  $m=l$ one obtains  $X_{21}^l\in Z(M_2(\alpha,\beta))$.
Finally, consider $X_{11}^l$ and $X_{22}^l$. An induction argument on $m$ yields the following equalities: 
\[
X_{11}^mX_{12}=\alpha^{-m}X_{12}X_{11}^m,\qquad
X_{11}^mX_{21}=\beta^{-m}X_{21}X_{11}^m
\]
\[
X_{22}^mX_{12}=\beta^{m}X_{12}X_{22}^m,\qquad
X_{22}^mX_{21}=\alpha^{m}X_{21}X_{22}^m.
\]
Once again, setting $m=l$ in the above identities, and $k=l$ in those equations from Lemma \ref{lemam2}, it follows that $X_{11}^l$, $X_{22}^l\in Z(M_2(\alpha,\beta))$. 
\end{proof}
Regarding the PI property of these algebras, the following result holds.
\begin{proposicion}
$M_2(\alpha,\beta)$  is a PI algebra if and only if $\alpha$ and $\beta$ are roots of unity.
\end{proposicion}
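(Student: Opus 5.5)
We prove the two implications separately. For sufficiency we use the central elements of Proposition \ref{centro}. For necessity we pass to PI subalgebras or quotients that are quantum planes, as in the argument for $H_{p,q}$.

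\emph{Sufficiency.} Assume $\alpha$ and $\beta$ are roots of unity. Let $l$ be a common multiple of their orders, for instance the product of the orders, so that $\alpha^l=\beta^l=1$. Proposition \ref{centro} gives that $X_{11}^l, X_{12}^l, X_{21}^l, X_{22}^l$ are central. Let $C:=\K[X_{11}^l,X_{12}^l,X_{21}^l,X_{22}^l]\subseteq Z(M_2(\alpha,\beta))$.

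By the PBW basis $\{X_{11}^aX_{12}^bX_{21}^cX_{22}^d\}$, each basis monomial can be written as
\[
(X_{11}^l)^{a'}(X_{12}^l)^{b'}(X_{21}^l)^{c'}(X_{22}^l)^{d'}\,X_{11}^{a_0}X_{12}^{b_0}X_{21}^{c_0}X_{22}^{d_0},\qquad 0\le a_0,b_0,c_0,d_0<l.
\]
This uses only Euclidean division of the exponents and the centrality of the $l$-th powers, which lets us pull them to the left. Hence $M_2(\alpha,\beta)$ is a finitely generated $C$-module, spanned by the $l^4$ monomials with exponents below $l$. Then \cite[\S 13.1.13, Corollary]{mcconnell} yields the PI property, exactly as in the proofs for $B(h)$ and $H_{p,q}$.

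\emph{Necessity.} Assume $M_2(\alpha,\beta)$ is PI.

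For $\alpha$, take the subalgebra generated by $X_{11}$ and $X_{12}$. By the iterated Ore extension description it is $\K[X_{11}][X_{12};\sigma_{12}]$ with $\sigma_{12}(X_{11})=\alpha X_{11}$, which is the quantum plane $\mathcal O_\alpha(\K^2)$. Subalgebras of PI algebras are PI, so \cite[\S 7.1]{DeConciniProcesi1993} gives that $\alpha$ is a root of unity. Alternatively, one could apply \cite[Proposition 2.5]{LeroyMatczuk2007} as in Theorem \ref{coro4.8}: $\sigma_{12}$ has finite order on $\K[X_{11}]$ if and only if $\alpha$ is a root of unity.

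For $\beta$, the subalgebra generated by $X_{11}$ and $X_{21}$ satisfies $X_{21}X_{11}=\beta X_{11}X_{21}$. Again by the PBW basis, it is a quantum plane $\mathcal O_\beta(\K^2)$, so $\beta$ is a root of unity.

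The only step needing care is that these two-generator subalgebras are genuinely the quantum planes, i.e.\ that there are no extra relations. This follows because the monomials $X_{11}^aX_{12}^b$ (respectively $X_{11}^aX_{21}^c$) form a subset of the PBW basis and are therefore linearly independent. With that checked, the rest of the argument is routine.
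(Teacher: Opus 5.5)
Your proposal is correct and follows the paper's proof essentially step for step. For sufficiency, the central $l$-th powers from Proposition \ref{centro} together with the PBW basis give finite generation over $\K[X_{11}^l,X_{12}^l,X_{21}^l,X_{22}^l]$, and the cited McConnell--Robson corollary then yields PI. For necessity, the quantum-plane subalgebras $\K\langle X_{11},X_{12}\rangle$ and $\K\langle X_{11},X_{21}\rangle$ combined with De Concini--Procesi give that $\alpha$ and $\beta$ are roots of unity. Your explicit PBW-basis check that these subalgebras have no extra relations is a point the paper leaves implicit.
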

\begin{proof}
Suppose that $M_2(\alpha,\beta)$ is a PI algebra. Since every subalgebra
of a PI algebra is again PI, the subalgebras considered below are also PI algebras. Let $A_{12}:=\K\langle X_{11},X_{12}\rangle$. Since  $X_{12}X_{11}=\alpha\,X_{11}X_{12}$ one has $A_{12}\cong \mathcal O_q(\K^2) =  \K\langle x,y\mid yx=qxy\rangle$; that is, $A_{12}$ is the quantum plane with $q=\alpha$. Analogously, letting $A_{21}:=\K\langle X_{11},X_{21}\rangle$, the relation $X_{21}X_{11} = \beta X_{11}X_{21}$ implies  $A_{21}\cong \mathcal O_{q'}(\K^2)$ with $q'=\beta$.  Under these conditions, the results in \cite[Section \S 7.1]{DeConciniProcesi1993} ensure that both $\alpha$ and $\beta$ are roots of unity.

Conversely, suppose that $\alpha$ and $\beta$  are roots of unity with orders $m$ and $n$, respectively, and let $L=mn$. Then $\alpha^L=\beta^L=1$. By Proposition \ref{centro}, the elements 
$X_{11}^L$, $X_{12}^L$, $X_{21}^L$, $X_{22}^L$ are central in  $M_2(\alpha,\beta)$ and, therefore,
\[
\K[X_{11}^L,X_{12}^L,X_{21}^L,X_{22}^L]\;\subseteq\; Z\!\left(M_2(\alpha,\beta)\right).
\]
Additionally, note that for each monomial $X_{11}^aX_{12}^bX_{21}^cX_{22}^d$, there exist  $0\le i,j,k,\ell<L$ such that
\[
a=uL+i,\quad b=vL+j,\] \[ c=wL+k,\quad d=tL+\ell,
\]
for certain $u$, $v$, $w$, $t\in \N$.
Using the commutativity of the 
$L$-th powers and the reordering relations of the algebra, one obtains
\[
X_{11}^aX_{12}^bX_{21}^cX_{22}^d
=(X_{11}^L)^{u}(X_{12}^L)^{v}(X_{21}^L)^{w}(X_{22}^L)^{t}\cdot
X_{11}^{i}X_{12}^{j}X_{21}^{k}X_{22}^{\ell}.
\]
That it, $M_2(\alpha,\beta)$ is finitely generated as a module over
$\K[X_{11}^L,X_{12}^L,X_{21}^L,X_{22}^L]$ by the finite set
\[
\mathcal{B}:=\{\,X_{11}^{i}X_{12}^{j}X_{21}^{k}X_{22}^{\ell}\;:\;0\le i,j,k,\ell<L\,\}.
\]
In this way, by 
 \cite[\S 13.1.13, Corollary]{mcconnell} the algebra $M_2(\alpha,\beta)$ satisfies the PI property.
\end{proof}


\section{The algebra \texorpdfstring{$U_{q}^{+}(B_2)$}{Uq+}}\label{álgebraU_q+}
The algebra \(U_q^{+}(B_2)\)  was originally defined within the general framework of quantum enveloping algebras developed by Drinfeld \cite{Drinfeld1986} and Jimbo \cite{Jimbo1985} in the 1980s, as part of the theory of quantum groups arising from the study of quantum integrable systems and the Yang-Baxter equation.
Its significance lies in the fact that it constitutes a fundamental example of a rank-$2$ quantum algebra (with two simple roots), associated to the complex simple Lie algebra $\mathfrak{g}$  of type $B_2$. In particular $U_q^{+}(B_2)$ is the positive part of the quantum group $U_q(\mathfrak{g})$, generated by the Chevalley generators $E_1$, $E_2$ corresponding to the simple roots of the root system of type $B_2$. This algebra has been studied in detail by Launois \cite{Launois2007}, who explicitly described its prime and primitive spectra, computed its automorphism group, and obtained quantum analogues of the Weyl algebra as simple quotients. Additionally, in \cite{AndruskiewitschDumas2008} some results related to Hopf algebra automorphism are also developed. 
Recently, some properties have been studied in the case where the parameter $q$ is a root of unity. In particular, Bera and Mukherjee \cite{BeraMukherjeeUqB2} proved that $U_q^{+}(B_2)$ is a PI algebra when $q$ is a primitive $m$-th root of unity with $m\geq 5$, computed its center, and classified its simple modules up to isomorphism. In this section, we provide a detailed proof of the fact that this algebra is a PI algebra.

Formally, the algebra   \(U_q^{+}(B_2)\) is the \(\K\)-algebra generated by two indeterminates  $e_1$ and $e_2$ subject to the quantum Serre relations,
\begin{align*}
e_1^{2}e_2 - (q^{2}+q^{-2})\,e_1 e_2 e_1 + e_2 e_1^{2} &= 0\\
e_2^{3}e_1 - (q^{2}+1+q^{-2})\,e_2^{2} e_1 e_2 + (q^{2}+1+q^{-2})\,e_2 e_1 e_2^{2}   - e_1 e_2^{3} &= 0
\end{align*}

In \cite[Section 3.1.2]{AndruskiewitschDumas2008},  Andruskiewitsch and Dumas introduced a new set of generators for
$U_q^{+}(B_2)$. For this, they defined
$e_{3}=e_{1}e_{2}-q^{2}e_{2}e_{1}
$ and $
z=e_{2}e_{3}-q^{2}e_{3}e_{2}$. Thus, the set 
\(\{\, z^{i} e_{3}^{j} e_{1}^{k} e_{2}^{\ell} \mid i,j,k,\ell \in \mathbb{Z}^{+}\,\}\) 
is a PBW basis for $(U_q^{+}(B_2)$ and the latter turns out to be  the \(\K\)-algebra generated by
\(e_1,e_2,e_3\) and \(z\) subject to the relations:
\begin{align*}
&e_i z = z e_i, \ \text{ for }\, i=1,2,3,
&e_1 e_3 &= q^{-2} e_3 e_1,\\
&e_2 e_1 = q^{-2} e_1 e_2 - q^{-2} e_3, &e_2 e_3 &= q^{2} e_3 e_2 + z.
\end{align*}
Furthermore, \(U_q^{+}(B_2)\) can be presented as an iterated Ore extension, 
\[
U_q^{+}(B_2) \;=\; \K[z,e_3][\,e_1;\sigma_1\,][\,e_2;\sigma_2,\delta_2\,],
\]
where
\begin{align*}
\sigma_1(z) &= z, 
& \sigma_1(e_3) &= q^{-2}e_3, \\[6pt]
\sigma_2(z) &= z, 
& \sigma_2(e_3) &= q^{2}e_3, 
& \sigma_2(e_1) &= q^{-2}e_1, \\[6pt]
\delta_2(z) &= 0, 
& \delta_2(e_3) &= z, 
& \delta_2(e_1) &= -q^{-2}e_3.
\end{align*}

Setting $p=1$ in (\ref{pqdef}), the following $q$ numbers are obtained:
\[
[k]_{q^2}=\frac{q^{2k}-1}{q^2-1},\qquad [k]_{q^{-4}}=\frac{q^{-4k}-1}{q^{-4}-1}.
\]

The identities below will be useful in the sequel:
\begin{align*}
q^{2k}+[k]_{q^2}
&=[k+1]_{q^2},\\
q^{-4k}+[k]_{q^{-4}}
&=[k+1]_{q^{-4}}.
\end{align*}

\begin{lema}{\cite[Lemma 2.1]{BeraMukherjeeUqB2}} \label{lem:B2}
Assume that $q^4\neq 1$. The identities below hold in  $U_q^+(B_2)$:
\begin{enumerate}[\rm (i)]
\item $e_2 e_3^{\,k}=q^{2k} e_3^{\,k} e_2+ [k]_{q^2}\, z\, e_3^{\,k-1}.$
  \item $e_2^{\,k} e_3=q^{2k} e_3 e_2^{\,k}+[k]_{q^2}\, z\, e_2^{\,k-1}.$
  \item $e_2 e_1^{\,k}=q^{-2k} e_1^{\,k} e_2-q^{-2}\,[k]_{q^{-4}}\, e_3 e_1^{\,k-1}.$
  \item $e_2^{k} e_1= q^{-2k} e_1 e_2^{k}- q^{-2} B_k\, e_3 e_2^{k-1}
- C_k\, z\, e_2^{k-2}$, 
where 
\begin{align*}
B_k=\frac{q^{2k}-q^{-2k}}{q^{2}-q^{-2}},
\;\;\;\;\;
C_k = q^{-2(k-1)}\,\frac{(q^{2k}-1)(q^{2(k-1)}-1)}{(q^{4}-1)(q^{2}-1)}.
\end{align*}
for $k \geq 2$.
\end{enumerate}
\end{lema}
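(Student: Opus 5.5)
All four identities follow by induction on $k$, using only the defining relations of $U_q^+(B_2)$ in the generators $e_1,e_2,e_3,z$:
\[
z \text{ central},\qquad e_1e_3=q^{-2}e_3e_1,\qquad e_2e_1=q^{-2}e_1e_2-q^{-2}e_3,\qquad e_2e_3=q^2e_3e_2+z.
\]
In each case the inductive step reduces to a scalar recursion for the coefficients, and I then check that recursion against the closed formula.

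For (i) the base case $k=1$ is the relation $e_2e_3=q^2e_3e_2+z$. For the step I multiply the case $k$ on the right by $e_3$ and apply $e_2e_3=q^2e_3e_2+z$ once more. This gives
\[
e_2e_3^{k+1}=q^{2k+2}e_3^{k+1}e_2+\bigl(q^{2k}+[k]_{q^2}\bigr)z\,e_3^{k},
\]
and the identity $q^{2k}+[k]_{q^2}=[k+1]_{q^2}$ closes the induction.

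For (ii) I multiply the case $k$ on the left by $e_2$ and use the same relation together with the centrality of $z$. The resulting coefficient recursion is identical to the one in (i).

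For (iii) I multiply the case $k$ on the right by $e_1$. This uses $e_2e_1=q^{-2}e_1e_2-q^{-2}e_3$, and also $e_1^ke_3=q^{-2k}e_3e_1^k$, obtained by a trivial induction from $e_1e_3=q^{-2}e_3e_1$. The coefficient of $e_3e_1^k$ becomes
\[
-q^{-2}\bigl(q^{-4k}+[k]_{q^{-4}}\bigr)=-q^{-2}[k+1]_{q^{-4}},
\]
as required.

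For (iv) I first compute the case $k=2$ directly, i.e. $e_2^2e_1=q^{-4}e_1e_2^2-q^{-2}(q^{-2}+q^2)e_3e_2-q^{-2}z$. This matches $B_2=q^2+q^{-2}$ and $C_2=q^{-2}$. Equivalently, the induction can be started from $k=1$ with $B_1=1$ and $C_1=0$. For the step I multiply the case $k$ on the left by $e_2$, moving $e_2$ past $e_1$ with the relation $e_2e_1=q^{-2}e_1e_2-q^{-2}e_3$ and past $e_3$ with $e_2e_3=q^2e_3e_2+z$. This gives
\[
e_2^{k+1}e_1=q^{-2(k+1)}e_1e_2^{k+1}-q^{-2}\bigl(q^{-2k}+q^2B_k\bigr)e_3e_2^{k}-\bigl(C_k+q^{-2}B_k\bigr)z\,e_2^{k-1}.
\]
The step therefore reduces to the two scalar recursions
\[
B_{k+1}=q^{-2k}+q^2B_k,\qquad C_{k+1}=C_k+q^{-2}B_k.
\]

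The first recursion follows at once after clearing the denominator $q^2-q^{-2}$. The main (though only computational) obstacle is the second recursion. Here I put $x=q^2$. Then $C_k=x^{-k}\,x\,(x^k-1)(x^{k-1}-1)/\bigl((x^2-1)(x-1)\bigr)$, and the bracket factors as
\[
(x^{k+1}-1)-(x^k-x)=(x^k+1)(x-1).
\]
Hence
\[
C_{k+1}-C_k=\frac{x^{-k}(x^{2k}-1)}{x^2-1},
\]
which is exactly $q^{-2}B_k=(x^k-x^{-k})/(x^2-1)$. The hypothesis $q^4\neq1$ guarantees that all the denominators appearing here are nonzero.
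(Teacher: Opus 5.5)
Your proposal is correct and follows the paper's proof essentially step for step. It uses the same inductions (right multiplication by $e_3$ and $e_1$ in (i) and (iii), left multiplication by $e_2$ in (ii) and (iv)), which reduce to the same scalar recursions $q^{2k}+[k]_{q^2}=[k+1]_{q^2}$, $q^{-4k}+[k]_{q^{-4}}=[k+1]_{q^{-4}}$, $B_{k+1}=q^{-2k}+q^2B_k$ and $C_{k+1}=C_k+q^{-2}B_k$. Your verification of the last recursion via $x=q^2$ differs only cosmetically, and your version is slightly more careful: you make $e_1^ke_3=q^{-2k}e_3e_1^k$ explicit in (iii), where the paper's displayed coefficient $q^{-2k}$ should read $q^{-4k}$, and you check the base case $k=2$ of (iv) directly.
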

\begin{proof}
The proof of each of these identities will be carried out by induction on $k$. First of all, the relations of $U_q^+(B_2)$ guarantee the identities for 
 $k=1$. Additionally,  
\begin{align*}
e_2 e_3^{k+1}
&=(q^{2k} e_3^{k} e_2 + [k]_{q^2}\, z\, e_3^{k-1})\,e_3= q^{2k} e_3^{k}(q^{2} e_3 e_2+z)+[k]_{q^2} z e_3^{k}\\
&= q^{2(k+1)} e_3^{k+1} e_2 + \big(q^{2k}+[k]_{q^2}\big) z e_3^{k}= q^{2(k+1)} e_3^{k+1} e_2 + [k\!+\!1]_{q^2}\, z e_3^{k},
\end{align*}
\begin{align*}
e_2^{k+1} e_3
&= e_2\big(q^{2k} e_3 e_2^{k} + [k]_{q^2}\, z\, e_2^{k-1}\big)= q^{2k} (q^{2} e_3 e_2 + z) e_2^{k} + [k]_{q^2} z e_2^{k}\\
&= q^{2(k+1)} e_3 e_2^{k+1} + \big(q^{2k}+[k]_{q^2}\big) z e_2^{k}= q^{2(k+1)} e_3 e_2^{k+1} + [k\!+\!1]_{q^2}\, z e_2^{k},
\end{align*}
and 
\begin{align*}
e_2 e_1^{k+1}
&= \big(q^{-2k} e_1^{k} e_2 - q^{-2}[k]_{q^{-4}}\, e_3 e_1^{k-1}\big) e_1= q^{-2k}e_1^{k} e_2e_1 - q^{-2}[k]_{q^{-4}}\, e_3 e_1^{k}\\
&= q^{-2k}\big(q^{-2} e_1^{k+1} e_2 - q^{-2} e_1^{k} e_3\big) - q^{-2}[k]_{q^{-4}}\, e_3 e_1^{k}\\
&= q^{-2(k+1)} e_1^{k+1} e_2 - q^{-2}\big(q^{-2k}+[k]_{q^{-4}}\big) e_3 e_1^{k} \\
&= q^{-2(k+1)} e_1^{k+1} e_2 - q^{-2}[k\!+\!1]_{q^{-4}}\, e_3 e_1^{k},
\end{align*}
thus proving identities (i)--(iii).
Now, for $k\ge 2$,
 \begin{align*}
q^{2}B_k+q^{-2k}
&=q^{2}\frac{q^{2k}-q^{-2k}}{q^{2}-q^{-2}}+q^{-2k}=\frac{q^{2k+2}-q^{2-2k}}{q^{2}-q^{-2}}+\frac{q^{-2k}(q^{2}-q^{-2})}{q^{2}-q^{-2}}\\
&=\frac{q^{2k+2}-q^{2-2k}+q^{-2k+2}-q^{-2k-2}}{q^{2}-q^{-2}}\\
&=\frac{q^{2k+2}-q^{-2k-2}}{q^{2}-q^{-2}}
= B_{k+1},
\end{align*}

whereas,
\begin{align*}
q^{-2}B_k
&= q^{-2}\,\frac{q^{2k}-q^{-2k}}{q^{2}-q^{-2}}
= \frac{q^{2k}-q^{-2k}}{q^2\,(q^{2}-q^{-2})}= \frac{q^{2k}-q^{-2k}}{q^{4}-1}
= \frac{(q^{2k}-1)(1+q^{-2k})}{q^{4}-1}.
\end{align*}
Thus, 
\begin{align*}
q^{-2}B_k+C_k
&=\frac{(q^{2k}-1)\big(q^{2}-q^{-2k}\big)}
{(q^{4}-1)(q^{2}-1)} =\frac{(q^{2k}-1)\,q^{-2k}\,(q^{2k+2}-1)}
{(q^{4}-1)(q^{2}-1)}\\[2pt]
&=q^{-2k}\,\frac{(q^{2k+2}-1)(q^{2k}-1)}{(q^{4}-1)(q^{2}-1)}
= C_{k+1}.
\end{align*}
 
Hence, 
\begin{align*}
 q^{-2}B_k + C_k = C_{k+1}.   
\end{align*}
Using the latter equations, it follows that
 \begin{align*}
e_2^{k+1}e_1
&= e_2\left( e_2^{k}e_1 \right)= e_2\!\left( q^{-2k}e_1e_2^{k} - q^{-2}B_k\,e_3e_2^{k-1} - C_k\,z\,e_2^{k-2} \right) \\ 
&= q^{-2k} e_2(e_1e_2^{k})
      - q^{-2}B_k e_2(e_3e_2^{k-1})
      - C_k\,e_2(ze_2^{k-2}) \\
&= q^{-2k}\!\left( (q^{-2}e_1e_2 - q^{-2}e_3) e_2^{k} \right)- q^{-2}B_k\!\left( (\,q^{2}e_3e_2 + z\,) e_2^{k-1} \right)
   \;-\; C_k\, z\, e_2^{k-1} \\
&= q^{-2(k+1)} e_1 e_2^{k+1}
   - q^{-2(k+1)} e_3 e_2^{k}
   - q^{-2}B_k\, q^{2} e_3 e_2^{k}- q^{-2}B_k\, z\, e_2^{k-1}
   \;-\; C_k\, z\, e_2^{k-1} \\
&= q^{-2(k+1)} e_1 e_2^{k+1}
   - q^{-2}\left( q^{-2k} + q^2B_k \right) e_3 e_2^{k}- \left( q^{-2}B_k + C_k \right) z\, e_2^{k-1}\\
&= q^{-2(k+1)} e_1 e_2^{k+1} - q^{-2}\left( B_{k+1} \right) e_3 e_2^{k}
   - \left( C_{k+1} \right) z\, e_2^{k-1}.
\end{align*}
 
\end{proof}

\begin{proposicion}{\cite[Corollary 2.2]{BeraMukherjeeUqB2}}\label{centro2} 
If $q$ is a primitive 
$l$-th root of unity, with  $l \geq 5$, then $z$, $e_{1}^l$, $e_{2}^l$ and  $e_{3}^l$ are central elements of \(U_q^{+}(B_2)\).
\end{proposicion}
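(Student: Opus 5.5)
Since $z$ commutes with $e_1,e_2,e_3$ by the defining relations of the presentation in generators $e_1,e_2,e_3,z$, it is central. The plan is then to show that each $e_i^l$ commutes with the generators $e_1,e_2,e_3,z$, using Lemma~\ref{lem:B2} with $k=l$. First I will record the consequences of the hypothesis. Since $q$ is a primitive $l$-th root of unity with $l\geq 5$, we have
\[
q^{2l}=q^{-2l}=q^{-4l}=1,
\]
while $q^2\neq 1$ and $q^4\neq 1$. The latter two facts matter because the order of $q$ does not divide $2$ or $4$. In particular, the hypothesis $q^4\neq 1$ of Lemma~\ref{lem:B2} holds, and the denominators $q^2-1$, $q^4-1$ and $q^2-q^{-2}=q^{-2}(q^4-1)$ are nonzero. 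Commutation with $z$ is automatic in every case, since $z$ is central.

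For $e_3^l$, an induction on the relation $e_1e_3=q^{-2}e_3e_1$ gives $e_1e_3^l=q^{-2l}e_3^le_1=e_3^le_1$. Lemma~\ref{lem:B2}(i) with $k=l$ gives $e_2e_3^l=q^{2l}e_3^le_2+[l]_{q^2}ze_3^{l-1}$. Here $[l]_{q^2}=(q^{2l}-1)/(q^2-1)=0$, so $e_2e_3^l=e_3^le_2$.

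For $e_1^l$, the same induction gives $e_1^le_3=q^{-2l}e_3e_1^l=e_3e_1^l$. Lemma~\ref{lem:B2}(iii) gives $e_2e_1^l=q^{-2l}e_1^le_2-q^{-2}[l]_{q^{-4}}e_3e_1^{l-1}$, and $[l]_{q^{-4}}=(q^{-4l}-1)/(q^{-4}-1)=0$ because $q^4\neq1$. Hence $e_1^l$ is central.

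For $e_2^l$, Lemma~\ref{lem:B2}(ii) gives $e_2^le_3=e_3e_2^l$, by the same vanishing $[l]_{q^2}=0$. Lemma~\ref{lem:B2}(iv), valid since $l\geq 2$, gives
\[
e_2^le_1=q^{-2l}e_1e_2^l-q^{-2}B_l\,e_3e_2^{l-1}-C_l\,ze_2^{l-2}.
\]
Here $B_l=(q^{2l}-q^{-2l})/(q^2-q^{-2})=0$, and $C_l$ carries the factor $q^{2l}-1=0$ over the nonzero denominator $(q^4-1)(q^2-1)$. Hence $e_2^le_1=e_1e_2^l$.

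The only delicate point, and the step I expect to require care, is this identity for $e_2^le_1$. The coefficients there must be shown to vanish rather than to be undefined, which requires checking that every denominator is nonzero. This is exactly where the bound $l\geq5$ enters, through $q^2\neq1$ and $q^4\neq1$. Everything else is a direct substitution $k=l$ into the identities of Lemma~\ref{lem:B2} together with elementary $q$-commutation inductions.
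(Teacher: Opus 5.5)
Your proof is correct and follows the paper's argument essentially step for step. In both, $z$ is central by the defining relations, and substituting $k=l$ into Lemma~\ref{lem:B2}(i)--(iv), together with the $q$-commutation $e_1e_3=q^{-2}e_3e_1$, kills every correction term. Your explicit check that the denominators $q^2-1$ and $q^4-1$ are nonzero, so that $[l]_{q^2}$, $[l]_{q^{-4}}$, $B_l$ and $C_l$ genuinely vanish rather than being undefined, is a welcome addition the paper only asserts. It only requires $l\nmid 2$ and $l\nmid 4$, so $l\geq 5$ is sufficient rather than exactly what is needed; for instance, $l=3$ would also work.
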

\begin{proof}
Since $q$ is a primitive $l$-th root of unity, it follows that
\begin{align*}
 q^{2l}=1,\quad [l]_{q^{2}}=0,\quad [l]_{q^{4}}=0,\quad B_l=0,\quad C_l=0.
\end{align*}
From the defining relations of \(U_q^{+}(B_2)\), it follows that $z$ is a central element. On the other hand, by  Lemma \ref{lem:B2}--(i) with $k=l$ one obtains 
\[
e_2 e_3^{\,l}=q^{2l} e_3^{\,l} e_2 + [l]_{q^{2}} z e_3^{\,l-1}=e_3^{\,l} e_2.
\]
Moreover, from the relation $e_1 e_3=q^{-2} e_3 e_1$ it follows by induction that 
$e_1 e_3^{\,l}=q^{-2l} e_3^{\,l} e_1=e_3^{\,l} e_1$. Hence,  $e_3^{\,l}$ commute with $e_1$ and $e_2$.
Analogously, Lemma \ref{lem:B2}--(ii), for $k=l$, it follows
\[
e_2^{\,l} e_3 = q^{2l} e_3 e_2^{\,l} + [l]_{q^{2}} z e_2^{\,l-1}= e_3 e_2^{\,l}.
\]
Once again, Lemma \ref{lem:B2}--(iv) implies
\[
e_2^{\,l} e_1 = q^{-2l} e_1 e_2^{\,l} - q^{-2} B_l\, e_3 e_2^{\,l-1} - C_l\, z e_2^{\,l-2}
= e_1 e_2^{\,l};
\]
that is, $e_2^{\,l}$ commute with $e_1$ and $e_3$. From the relation $e_1 e_3=q^{-2} e_3 e_1$ can be proved that $e_3 e_1^{\,l}=q^{2l} e_1^{\,l} e_3 = e_1^{\,l} e_3$. Indeed, Lemma \ref{lem:B2}--(iii), assuming $k=l$, implies
\[
e_2 e_1^{\,l}= q^{-2l} e_1^{\,l} e_2 - q^{-2}[l]_{q^{4}} e_3 e_1^{\,l-1}
= e_1^{\,l} e_2.
\]
Thus, $e_1^{\,l}$ commute  with $e_2$ and $e_3$. 
Finally, since  $e_1^{\,l}$, $e_2^{\,l}$, $e_3^{\,l}$ and $z$ commute with the generators of $U_q^{+}(B_2)$, it is concluded that these elements belong to its center.
\end{proof}
In the following proposition, the root-of-unity case is considered under the standing assumption that $q$ is a primitive $\ell$-th root of unity with $\ell\geq 5$, as in Proposition \ref{centro2}.
\begin{proposicion}{\cite[Proposition 2.3]{BeraMukherjeeUqB2}}
 $U_q^{+}(B_2)$ is a PI algebra if and only if $q$ is an $l$-th  root of unity.
\end{proposicion}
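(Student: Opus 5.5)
The plan is to follow the same two-direction template used for $H_{p,q}$ and $M_2(\alpha,\beta)$. For necessity, I would assume $U_q^+(B_2)$ is PI and locate a quantum-plane subalgebra whose parameter is a power of $q$. The relation $e_1e_3=q^{-2}e_3e_1$ suggests using the subalgebra $A:=\K\langle e_1,e_3\rangle$. Because the monomials $z^ie_3^je_1^ke_2^\ell$ form a PBW basis, the monomials $e_3^je_1^k$ are linearly independent, so
\[
A\cong \mathcal O_{q^{-2}}(\K^2)=\K\langle x,y\mid yx=q^{-2}xy\rangle.
\]
Equivalently, $A=\K[e_3][e_1;\sigma_1]$ is an Ore extension, and Proposition 2.5 of \cite{LeroyMatczuk2007} could be applied as in Theorem \ref{coro4.8}. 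Subalgebras of PI algebras are PI, so $A$ is PI. By \cite[Section \S 7.1]{DeConciniProcesi1993}, this forces $q^{-2}$ to be a root of unity, and hence $q$ itself is a root of unity.

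For sufficiency, I assume, as stated, that $q$ is a primitive $l$-th root of unity with $l\ge 5$. Proposition \ref{centro2} then shows that $z$, $e_1^l$, $e_2^l$, $e_3^l$ are central. Let $C:=\K[z,e_1^l,e_2^l,e_3^l]\subseteq Z(U_q^+(B_2))$. Take a PBW monomial $z^ie_3^je_1^ke_2^m$ and write $j=al+j'$, $k=bl+k'$, $m=cl+m'$ with $0\le j',k',m'<l$. Since the $l$-th powers are central, they can be pulled out to the left, giving
\[
z^ie_3^je_1^ke_2^m=z^i(e_3^l)^a(e_1^l)^b(e_2^l)^c\, e_3^{j'}e_1^{k'}e_2^{m'}.
\]
This exhibits $U_q^+(B_2)$ as a finitely generated $C$-module, with generating set $\{e_3^{j'}e_1^{k'}e_2^{m'} : 0\le j',k',m'<l\}$. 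Then \cite[\S 13.1.13, Corollary]{mcconnell} yields the PI property.

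The main obstacle is not in either direction itself but in the hypotheses feeding Proposition \ref{centro2}. That proposition relies on the vanishing of $[l]_{q^2}$, $[l]_{q^{-4}}$, $B_l$ and $C_l$, and on the condition $q^4\ne1$ from Lemma \ref{lem:B2}. I would check these directly. Since $q^{2l}=1$, we get $[l]_{q^2}=0$, $[l]_{q^{-4}}=0$ and $B_l=0$, provided the denominators $q^2-1$, $q^4-1$ and $q^2-q^{-2}$ are nonzero. This is guaranteed by $l\ge5$, as $q^4\ne1$. Likewise $C_l=0$ because its numerator contains the factor $q^{2l}-1$. If $q$ is a root of unity of small order ($q^4=1$), this route breaks down. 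I would restrict the converse to the standing assumption $l\ge5$, as the paper does, rather than attempt a separate treatment of those cases.
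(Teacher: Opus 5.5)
Your proposal is correct and follows essentially the same route as the paper. For necessity you use the quantum-plane subalgebra $\K\langle e_1,e_3\rangle$ with parameter $q^{-2}$ together with De Concini--Procesi, and for sufficiency (under the same standing assumption $l\ge 5$) you use the central subalgebra $\K[z,e_1^l,e_2^l,e_3^l]$ from Proposition~\ref{centro2} with McConnell--Robson's finite-over-center criterion; factoring the PBW monomials $z^ie_3^je_1^ke_2^m$ directly makes the module-finiteness step cleaner than the paper's reordering of $e_1^ae_2^be_3^c$.
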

\begin{proof}
Consider the subalgebra $A:= \K\langle e_1,e_3\rangle$ of $U_q^{+}(B_2)$. The relation  $e_1e_3=q^{-2}e_3e_1$ shows that $A$ is isomorphic to the quantum plane with parameter $\lambda=q^{-2}$:
\[
A\ \cong\  \K\langle x,y\mid yx=\lambda xy\rangle,\qquad \lambda=q^{-2}.
\]
Hence, by \cite[Section \S 7.1]{DeConciniProcesi1993} one obtains  $q^{-2}$ is a root of unity, which implies that $q$ is a root of unity. Conversely, Proposition \ref{centro2}, ensures that 
\[
R:= \K\!\left[e_1^{\,l},\,e_2^{\,l},\,e_3^{\,l},\,z\right]\subseteq Z \big(U_q^{+}(B_2)\big).
\] 
For a monomial of the form $e_1^{a}e_2^{b}e_3^{c}$, there exist $u,v,w \in \mathbb{N}$ such that 
\[  
a=ul+i,\qquad b=vl+j,\qquad c=wl+k,
\]
with $0\leq i,j,k < l$. Using the identities of Lemma \ref{lem:B2} and the relation $e_1e_3=q^{-2}e_3e_1$, any product 
$e_1^{a}e_2^{b}e_3^{c}$ may be reordered by successively displacing $e_1^{L}$, $e_2^{L}$ and $e_3^{L}$ to the left.
Each time a commutation is performed, the resulting correction terms are either multiplied by $z$
or strictly decrease some of the exponents of $e_1,e_2,e_3$; that is,
 \[
e_1^{a}e_2^{b}e_3^{c}\ \in\
\sum_{0\le i,j,k<l} R\cdot e_1^{i}e_2^{j}e_3^{k}.
\] 
This establishes that $U_q^{+}(B_2)$ is  generated as an $R$-module by the finite set 
\[
\mathcal{B}:=\big\{\,e_1^{i}e_2^{j}e_3^{k}\ \mid \ 0\le i,j,k<l\,\big\}.
\] 
Therefore, 
 \cite[\S 13.1.13, Corollary]{mcconnell}
guarantee that 
$U_q^{+}(B_2)$ sa\-tis\-fies the PI property.
\end{proof}


\section{Quantized Weyl Algebras}\label{Wely cuanticas}
Let $\K$ be a field, $n$  a positive integer, and $\Lambda = (\lambda_{ij})_{n \times n}$  a multiplicatively antisymmetric matrix over $\K$; that is,  $\lambda_{ii} = 1$ and $\lambda_{ij}\lambda_{ji} = 1$ for all $1 \leq i,j \leq n$. 
In addition, let $\mathbf{q} = (q_1, \dots, q_n) \in \K^n$ be  an $n$-tuple with each 
$q_i \in \K \setminus \{0,1\}$. So, given $\Lambda$ and $\mathbf{q}$, the \textbf{multiparametric quantum Weyl algebra} 
$A_{n}^{\mathbf{q},\Lambda}$, introduced by Maltsiniotis in \cite{Maltsiniotis1990} in his work on non-commutative differential calculus, is defined as the $\K$-algebra generated by the variables $x_1, y_1, \dots, x_n, y_n$, subject to the following relations:
\[
\begin{aligned}
x_i x_j &= q_i \lambda_{ij} x_j x_i, \quad && 1 \le i < j \le n,\\
x_i y_j &= \lambda_{ji}^{-1} y_j x_i, \quad && 1 \le i < j \le n,\\
y_i y_j &= \lambda_{ij} y_j y_i, \quad && 1 \le i < j \le n,\\
y_i x_j &= q_i^{-1} \lambda_{ij}^{-1} x_j y_i, \quad && 1 \le i < j \le n,\\
x_i y_i - q_i y_i x_i &= 1 + \sum_{k=1}^{i-1} (q_k - 1) y_k x_k, \quad && 1 \le i \le n.
\end{aligned}
\] 
Another family of multiparametric quantum Weyl algebras has been studied in the literature, see \cite{AkhavizadeganJordan1996}.  This family exhibits more symmetric relations than those of $A_{n}^{\mathbf{q},\Lambda}$. The symmetric multiparametric quantum Weyl algebra $\mathcal{A}_{n}^{\mathbf{q},\Lambda}$, introduced by Akhavizadegan and Jordan and also known as the \textbf{alternative quantum Weyl algebra}, is the $\K$-algebra generated by the variables $x_1,y_1,\ldots,x_n,y_n$ subject to the following relations:
\[
\begin{aligned}
x_i x_j &= \lambda_{ij} x_j x_i, \quad && 1 \le i < j \le n,\\
x_i y_j &= \lambda_{ij}^{-1} y_j x_i, \quad && 1 \le i < j \le n,\\
y_i y_j &= \lambda_{ij} y_j y_i, \quad && 1 \le i < j \le n,\\
y_i x_j &= \lambda_{ij}^{-1} x_j y_i, \quad && 1 \le i < j \le n,\\
x_i y_i - q_i y_i x_i &= 1, \quad && 1 \le i \le n.
\end{aligned}
\]
Both algebras  $A_{n}^{\mathbf{q},\Lambda}$ and $\mathcal{A}_{n}^{\mathbf{q},\Lambda}$ admit presentations as iterated Ore extensions, from which it follows that they are Noetherian affine domains, see  \cite{AkhavizadeganJordan1996, Jordan1995}. When the parameters are roots of unity, it is assumed that $q_i$ is a primitive root of unity of order $l_i$, for all $1 \le i \le n$, and that $\lambda_{ij}$ is a root of unity of order $l_{ij}$, for all $1 \le i < j \le n$.

If $z_0:=1$ and $z_i:=x_i y_i-y_i x_i$ are defined for $1\le i\le n$, then 
\[
z_i \;=\; 1+\sum_{j=1}^i (q_j-1)\,y_jx_j
\;=\; z_{i-1}+(q_i-1)\,y_i x_i.
\]
Indeed, note that
\begin{align*}
z_i
&=x_i y_i-y_i x_i = (x_i y_i-q_i y_i x_i)+(q_i-1)y_i x_i\\
&=\Big(1+\sum_{j=1}^{i-1}(q_j-1)\,y_j x_j\Big)+(q_i-1)y_i x_i\\
&=1+\sum_{j=1}^{i}(q_j-1)\,y_j x_j,
\end{align*}
whereas, 
\begin{align*}
z_i
&= x_i y_i - y_i x_i =  1 + \sum_{j=1}^{i} (q_j-1)\,y_j x_j \\
&= 1 + \sum_{j=1}^{i-1} (q_j-1)\,y_j x_j + (q_i-1)\,y_i x_i \\
&= z_{i-1} + (q_i-1)\,y_i x_i.
\end{align*}
Thus, the last of the defining relations of $A_{n}^{\mathbf{q},\Lambda}$ may be rewritten as follows,
\begin{align*}
x_i y_i - q_i\,y_i x_i 
&= 1 + \sum_{j=1}^{i-1} (q_j-1)\,y_j x_j = z_{i-1}.
\end{align*}

\begin{lema}\label{lemazjcentral} For each  \(1\le i\le n\), the element \(z_i\) is  normal in  $A_{n}^{\mathbf{q},\Lambda}$.

\end{lema}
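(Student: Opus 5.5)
The plan is to show that $z_i$ \emph{skew-commutes} with every generator. That is, for each generator $w\in\{x_1,y_1,\dots,x_n,y_n\}$ I will find a nonzero scalar $\mu_{i}(w)\in\K^*$ with
\[
z_i\,w=\mu_i(w)\,w\,z_i .
\]
Given this, an induction on word length gives $z_i\,m=\mu\, m\, z_i$ for every monomial $m$ and some $\mu\in\K^*$ depending on $m$. Hence $z_iA\subseteq Az_i$ and $Az_i\subseteq z_iA$, so $z_iA=Az_i$ and $z_i$ is normal. I will argue by induction on $i$, using the recursion $z_i=z_{i-1}+(q_i-1)\,y_ix_i$ established just before the statement, together with $z_0=1$.

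The first step is to handle generators with index $j>i$. Here $z_i$ is a linear combination of $1$ and the products $y_kx_k$ with $k\le i<j$. From the relations $x_kx_j=q_k\lambda_{kj}x_jx_k$ and $y_kx_j=q_k^{-1}\lambda_{kj}^{-1}x_jy_k$, the scalars cancel and $y_kx_k\,x_j=x_j\,y_kx_k$. The same cancellation, using the relations $x_ky_j=\lambda_{jk}^{-1}y_jx_k$ and $y_ky_j=\lambda_{kj}y_jy_k$, should give $y_kx_k\,y_j=y_j\,y_kx_k$. So $z_i$ commutes with $x_j$ and $y_j$ for all $j>i$. In particular $z_{i-1}$ commutes with $x_i$ and $y_i$.

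The second step treats the diagonal generators $x_i,y_i$. Using $x_iy_i=q_iy_ix_i+z_{i-1}$ and the fact that $z_{i-1}$ commutes with $x_i$, I compute
\[
x_iz_i=z_{i-1}x_i+(q_i-1)\bigl(q_iy_ix_i+z_{i-1}\bigr)x_i=q_i\,z_i\,x_i .
\]
Symmetrically, I expect $y_iz_i=q_i^{-1}z_iy_i$. This reproduces the one-variable computation for the quantum Weyl algebra.

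The third step, for generators $x_j,y_j$ with $j<i$, is the one I expect to be the main obstacle. I write
\[
z_i=z_j+\sum_{k=j+1}^{i}(q_k-1)\,y_kx_k .
\]
By the second step, $z_j$ skew-commutes with $x_j$ with factor $q_j^{-1}$. I then must check that each $y_kx_k$ with $k>j$ skew-commutes with $x_j$ with exactly the \emph{same} factor. Only then does the whole sum $z_i$ skew-commute with $x_j$; otherwise the pieces carry different scalars and the sum is not normal. The same check is needed for $y_j$. This is a scalar bookkeeping problem involving $q_j$, $\lambda_{jk}$ and the antisymmetry $\lambda_{jk}\lambda_{kj}=1$. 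It is sensitive to the precise form of the relations $x_jy_k=\lambda_{kj}^{-1}y_kx_j$ and $y_jx_k=q_j^{-1}\lambda_{jk}^{-1}x_ky_j$ for $j<k$. My first attempt will be to compute $y_kx_k\,x_j$ directly by moving $x_j$ first past $x_k$ and then past $y_k$, and compare with $z_jx_j=q_j^{-1}x_jz_j$. If the $\lambda$'s do not cancel as written, I will check the relations against Jordan's iterated Ore extension presentation in \cite{Jordan1995}, where these compatibilities are built in.
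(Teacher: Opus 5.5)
Your strategy is sound, but your first step does not go through as written. With the relation you quote, $x_ky_j=\lambda_{jk}^{-1}y_jx_k$ (as the paper prints it), together with $y_ky_j=\lambda_{kj}y_jy_k$ and $\lambda_{jk}^{-1}=\lambda_{kj}$, the computation gives
\[
(y_kx_k)\,y_j=\lambda_{kj}^{2}\,y_j\,(y_kx_k)\qquad (k<j),
\]
not commutation. So the summands $1$ and $y_kx_k$ of $z_i$ carry different factors. The $y_i$-half of your second step, which needs $z_{i-1}y_i=y_iz_{i-1}$, breaks as well.

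The printed relation is a misprint. With it, $x_1z_1$ computed from $z_1=x_2y_2-q_2y_2x_2$ equals $q_1\lambda_{12}^{2}z_1x_1$. Computed from $z_1=1+(q_1-1)y_1x_1$, it equals $q_1z_1x_1$. Together these force $(1-\lambda_{12}^{2})z_1x_1=0$. The intended relation is $x_ky_j=\lambda_{jk}\,y_jx_k$ for $k<j$; this is Jordan's form, and it is what the paper's own proof actually uses when it computes $z_iy_j$ for $i<j$.

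With the corrected relation, your first step holds and your second step is correct as written. The bookkeeping in your third step also closes: for $j<k$,
\[
(y_kx_k)\,x_j=q_j^{-1}\,x_j\,(y_kx_k),\qquad (y_kx_k)\,y_j=q_j\,y_j\,(y_kx_k).
\]
These are exactly the factors in $z_jx_j=q_j^{-1}x_jz_j$ and $z_jy_j=q_jy_jz_j$ from your second step. Hence $z_ix_j=q_j^{-1}x_jz_i$ and $z_iy_j=q_jy_jz_i$ for $j\le i$, while $z_i$ commutes with $x_j,y_j$ for $j>i$.

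Compared with the paper: the paper never expands $z_i$ through the recursion. For $j\neq i$ it works directly with the two terms of $z_i=x_iy_i-y_ix_i$. For $j<i$ this is shorter than your route, since it needs neither the diagonal case nor the sum over $k$. On the other hand, the paper's argument only treats $x_j,y_j$ with $j\neq i$, plus $z_iz_j=z_jz_i$, which normality does not require. It never handles $x_i$ and $y_i$ themselves. Your second step, $x_iz_i=q_iz_ix_i$ and $y_iz_i=q_i^{-1}z_iy_i$, supplies exactly this missing case. Your closing observation, that skew-commutation with every generator yields $z_iA=Az_i$, makes explicit a step the paper leaves implicit.
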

\begin{proof}
In fact, for  \(i,j\), with \(1\le i<j\le n\), 
\begin{align*}
z_i x_j
&=(x_i y_i - y_i x_i)\,x_j = x_i(y_i x_j) - y_i(x_i x_j) \\
&= x_i\!\big(q_i^{-1}\lambda_{ij}^{-1} x_j y_i\big)
   - y_i\!\big(q_i\lambda_{ij} x_j x_i\big) \\
&= q_i^{-1}\lambda_{ij}^{-1}(x_i x_j) y_i
   - q_i\lambda_{ij}(y_i x_j) x_i \\
&= q_i^{-1}\lambda_{ij}^{-1}\big(q_i\lambda_{ij} x_j x_i\big) y_i
   - q_i\lambda_{ij}\big(q_i^{-1}\lambda_{ij}^{-1} x_j y_i\big) x_i \\
&= x_j x_i y_i - x_j y_i x_i
= x_j(x_i y_i - y_i x_i)
= x_j z_i.
\end{align*}
Likewise,
\[
\begin{aligned}
z_i y_j
&=(x_i y_i - y_i x_i)\,y_j = x_i(y_i y_j) - y_i(x_i y_j) \\
&= x_i(\lambda_{ij} y_j y_i) - y_i(\lambda_{ij}^{-1} y_j x_i) \\
&= \lambda_{ij}(x_i y_j) y_i -\lambda_{ij}^{-1} (y_i y_j) x_i  \\
&=\lambda_{ij}\lambda_{ij}^{-1} y_j x_i y_i - \lambda_{ij}^{-1} \lambda_{ij} y_j y_i x_i  \\
&= y_j(x_i y_i - y_i x_i)
= y_j z_i .
\end{aligned}
\]

Additionally, given \(i,j\),  with \(1\le j < i\le n\), one obtains
\begin{align*}
z_i x_j
&=(x_i y_i-y_i x_i)x_j \\
&=x_i(y_i x_j)-y_i(x_i x_j) \\
&=x_i(\lambda_{ij}\,x_j y_i)-y_i(q_j^{-1}\lambda_{ij}^{-1})\,x_j x_i \\
&= (\lambda_{ij})(x_i x_j)y_i-(q_j^{-1}\lambda_{ij}^{-1})(y_i x_j)x_i \\
&=(\lambda_{ij})(q_j^{-1}\lambda_{ij}^{-1})\,x_j x_iy_i-(q_j^{-1}\lambda_{ij}^{-1})(\lambda_{ij})\,x_j y_ix_i \\
&=\,q_j^{-1}x_j(x_i y_i-y_i x_i)
=\,q_j^{-1}x_j z_i.
\end{align*}
Analogously, 
\begin{align*}
z_i y_j
&=(x_i y_i-y_i x_i)y_j \\
&=x_i(y_i y_j)-y_i(x_i y_j) \\
&=\lambda_{ij}^{-1}(x_iy_j) y_i-q_j\lambda_{ij}(y_i y_j )x_i \\
&= \lambda_{ij}^{-1}q_j\lambda_{ij}y_jx_iy_i-q_j\lambda_{ij}\lambda_{ij}^{-1}y_jy_ix_i  \\
&= q_jy_j(x_iy_i-y_ix_i)
=\,q_jy_j z_i.
\end{align*}
Furthermore, if \(i<j\), 
\begin{align*}
z_i z_j
&= (x_i y_i - y_i x_i) z_j =  x_i (y_i z_j) - y_i (x_i z_j)\\
&= x_i (z_j y_i) - y_i (z_j x_i) = z_j(x_i y_i) - z_j (y_i  x_i)\\
&= z_j(x_i y_i - y_i x_i)  \\
&= z_j z_i;
\end{align*}
 and, if \(j\leq i\), it follows that
\begin{align*}
z_i z_j
&= z_i(x_j y_j - y_j x_j) = (z_i x_j) y_j - (z_i y_j) x_j \\
&= (q_j^{-1} x_j z_i) y_j - (q_j\, y_j z_i) x_j \\
&= q_j^{-1} x_j (z_i y_j) - q_j\, y_j (z_i x_j) \\
&= q_j^{-1} x_j (q_j\, y_j z_i) - q_j\, y_j (q_j^{-1} x_j z_i) \\
&= x_j y_j z_i - y_j x_j z_i = (x_j y_j - y_j x_j) z_i \\
&= z_j z_i.
\end{align*}
\end{proof}

\begin{proposicion}\label{identities6-2} For each \(i\), \(k\ge1\), the following identities hold
\begin{enumerate}[\rm (i)]
\item $ x_i^{k}y_i \;=\; q_i^{k}y_i x_i^{k}
\;+\; (1+q_i+\cdots+q_i^{k-1})\, z_{i-1}\,x_i^{k-1}$.
\item $x_i y_i^{k} \;=\; q_i^{k}y_i^{k} x_i
\;+\; (1+q_i+\cdots+q_i^{k-1})\, z_{i-1}\,y_i^{\,k-1}$.
\end{enumerate}
\end{proposicion}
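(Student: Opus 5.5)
Both identities are proved by induction on $k$, taking as the only inputs the rewritten defining relation
\[
x_i y_i = q_i\, y_i x_i + z_{i-1}
\]
and the commutation of $z_{i-1}$ with $x_i$ and $y_i$. So the first step is to isolate the latter fact. Lemma \ref{lemazjcentral} with indices $i-1 < i$ gives $z_{i-1}x_i = x_i z_{i-1}$ and $z_{i-1}y_i = y_i z_{i-1}$, since for $i' < j$ the computations there show that $z_{i'}$ commutes with $x_j$ and $y_j$. For $i=1$ we have $z_0 = 1$, so the claim is trivial.

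For (i), the base case $k=1$ is exactly the relation $x_i y_i = q_i y_i x_i + z_{i-1}$. Assuming (i) for $k$, multiply on the left by $x_i$:
\[
x_i^{k+1}y_i = q_i^{k}\,(x_i y_i)\, x_i^{k} + (1+\cdots+q_i^{k-1})\, x_i z_{i-1} x_i^{k-1}.
\]
Substitute $x_i y_i = q_i y_i x_i + z_{i-1}$ in the first term, and move $x_i$ past $z_{i-1}$ in the second. This gives
\[
q_i^{k+1} y_i x_i^{k+1} + \bigl(q_i^{k} + 1 + \cdots + q_i^{k-1}\bigr) z_{i-1} x_i^{k},
\]
which is the claimed formula for $k+1$.

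For (ii), the argument is symmetric: multiply the inductive hypothesis on the right by $y_i$, rewrite $x_i y_i$ in the first term, and use $y_i^{k-1} z_{i-1} y_i = z_{i-1} y_i^{k}$.

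The only real point is the commutation of $z_{i-1}$ with $x_i$ and $y_i$. If the statement of Lemma \ref{lemazjcentral} were read only as normality, I would instead check directly, from the relations $x_j x_i = q_j\lambda_{ji} x_i x_j$ and its companions, that each summand $y_jx_j$ with $j<i$ commutes with $x_i$ and $y_i$, so that $z_{i-1}=1+\sum_{j<i}(q_j-1)y_jx_j$ does as well. This direct check is routine, since the scalars coming from $y_j$ and from $x_j$ cancel.
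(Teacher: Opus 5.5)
Your argument is correct and is essentially the paper's: induction on $k$ from $x_iy_i=q_iy_ix_i+z_{i-1}$, together with the commutation of $z_{i-1}$ with $x_i$ and $y_i$ given by Lemma~\ref{lemazjcentral}. The only difference is that you split $x_i^{k+1}y_i=x_i(x_i^ky_i)$ and use $S_k+q_i^k=S_{k+1}$, whereas the paper splits $x_i^{k+1}y_i=x_i^k(x_iy_i)$ and uses $q_iS_k+1=S_{k+1}$, with $S_k=1+q_i+\cdots+q_i^{k-1}$ (and there is a small slip in (ii): the term to rearrange is $q_i^k y_i^k z_{i-1}$, so the identity you need is $y_i^k z_{i-1}=z_{i-1}y_i^k$, which follows from the same commutation).
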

\begin{proof}
Identity (i) is proved by induction on \(k\); case (ii) is analogous. Note that the case $k=1$ is reduced exactly to the relation \(x_i y_i-q_i y_i x_i=z_{i-1}\). Suppose the identity holds for $k\geq 1$; that is, 
$$x_i^{k}y_i  = q_i^{k}y_i x_i^{k}
 + S_k\, z_{i-1}x_i^{k-1},$$
where $S_k:=1+q_i+\cdots+q_i^{k-1}$. In these conditions,
\begin{align*}
x_i^{k+1}y_i
&=x_i^{k}(x_i y_i)
 =x_i^{k}(q_i y_i x_i+z_{i-1})\\
&= q_i\,x_i^{k}y_i x_i \;+\; x_i^{k}z_{i-1}.
\end{align*}
Applying the induction hypothesis and using Lemma \ref{lemazjcentral}, it fo\-llo\-ws that 
\begin{align*}
x_i^{k+1}y_i
&= q_i\!\left(q_i^{k}y_i x_i^{k}
 + S_k\, z_{i-1}x_i^{k-1}\right)x_i \;+\; z_{i-1}x_i^{k}\\
&= q_i^{k+1} y_i x_i^{k+1}
   + q_i S_k\, z_{i-1}x_i^{k}
   + z_{i-1}x_i^{k}\\
&= q_i^{k+1} y_i x_i^{k+1}
   + (q_i S_k + 1)\, z_{i-1}x_i^{k}\\
&=  q_i^{k+1} y_i x_i^{k+1}
  + S_{k+1}\, z_{i-1}x_i^{k},
\end{align*}
the last equality being a consequence of the fact below:
$$q_i S_k+1= q_i(1+\cdots+q_i^{k-1})+1
= 1+q_i+\cdots+q_i^{k}\,=\,S_{k+1}.$$
\end{proof}

\begin{proposicion}{\cite[Proposition 2.4]{BeraMukherjee2023}} \label{propiedad PI wel multiparámetro}
Let $n\geq 2$. The quantum Weyl algebra $A_{n}^{\mathbf{q},\Lambda}$ (respectively, $\mathcal{A}_{n}^{\mathbf{q},\Lambda}$) 
is a PI algebra if and only if each $q_i$ and each $\lambda_{ij}$ are roots of unity.
\end{proposicion}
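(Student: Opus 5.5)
The proof follows the pattern used earlier in the paper for $M_2(\alpha,\beta)$ and $U_q^+(B_2)$. The necessity direction passes to quantum-plane subalgebras. The sufficiency direction exhibits a central polynomial subalgebra over which the algebra is a finitely generated module, and then invokes \cite[\S 13.1.13, Corollary]{mcconnell}.

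\emph{Necessity.} Suppose $A_{n}^{\mathbf{q},\Lambda}$ is PI. Then every subalgebra is PI. For $i<j$ the subalgebra generated by $y_i,y_j$ satisfies $y_iy_j=\lambda_{ij}y_jy_i$. By the PBW basis coming from the iterated Ore extension presentation, this subalgebra is a quantum plane $\mathcal O_{\lambda_{ij}}(\K^2)$, so \cite[Section \S 7.1]{DeConciniProcesi1993} forces $\lambda_{ij}$ to be a root of unity. To reach $q_i$, take $i<j$ (possible since $n\ge 2$). The subalgebra generated by $x_i,x_j$ satisfies $x_ix_j=q_i\lambda_{ij}x_jx_i$, so $q_i\lambda_{ij}$ is a root of unity, and hence so is $q_i$. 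This handles every $i<n$. For $q_n$ I would use the normal elements $z_{n-1}$ and $x_n$. Lemma \ref{lemazjcentral} gives $z_{n-1}x_n=x_nz_{n-1}$ when $n-1<n$, so that relation does not help. Instead I would use $z_n$ and $x_{n}$: from $z_n=z_{n-1}+(q_n-1)y_nx_n$ one computes $x_nz_n=q_nz_nx_n$, by the same computation that gives $z_ix_j=q_j^{-1}x_jz_i$ for $j<i$ extended to $j=i$. The subalgebra generated by $x_n$ and $z_n$ is then a quantum plane with parameter $q_n$, and \cite{DeConciniProcesi1993} applies again.

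For $\mathcal{A}_{n}^{\mathbf{q},\Lambda}$ the argument is the same. The $\lambda_{ij}$ come from the $y$'s. The $q_i$ come from the pair $x_i$ and $u_i:=x_iy_i-y_ix_i=1+(q_i-1)y_ix_i$, which satisfy $x_iu_i=q_iu_ix_i$.

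\emph{Sufficiency.} Let $L$ be a common multiple of all the orders $l_i$ and $l_{ij}$. I claim that $x_i^{L}$ and $y_i^{L}$ are central for every $i$. For generators with a different index, the skew-commutation relations give scalars that are products of the $q$'s and $\lambda$'s raised to the power $L$, which all equal $1$. For the generator with the same index, Proposition \ref{identities6-2} gives
\[
x_i^{L}y_i=q_i^{L}y_ix_i^{L}+(1+q_i+\cdots+q_i^{L-1})z_{i-1}x_i^{L-1}.
\]
Since $q_i\neq 1$ and $q_i^{L}=1$, the geometric sum vanishes, so $x_i^{L}$ commutes with $y_i$. The same holds for $y_i^{L}$ and $x_i$.

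The PBW basis $\{y_1^{b_1}x_1^{a_1}\cdots y_n^{b_n}x_n^{a_n}\}$ of the iterated Ore extension then shows that the algebra is generated as a module over $\K[x_1^{L},y_1^{L},\ldots,x_n^{L},y_n^{L}]$ by the finitely many basis monomials with all exponents $<L$. Indeed, each basis monomial factors as a central element times such a reduced monomial. The alternative algebra is handled identically, with $1$ in place of $z_{i-1}$.

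The main obstacle I expect is the careful verification of $x_nz_n=q_nz_nx_n$ for the last parameter. The other steps reduce to bookkeeping with the defining relations and the earlier lemmas.
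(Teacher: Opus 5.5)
Your proposal is correct. The sufficiency half is the paper's argument: take a common multiple $L$ of the orders (the paper uses the lcm), get centrality of $x_i^L,y_i^L$ from the cross relations and Proposition~\ref{identities6-2}, and conclude finite generation over $\K[x_1^L,y_1^L,\ldots,x_n^L,y_n^L]$ by reduced PBW monomials. Your ordering $y_1^{b_1}x_1^{a_1}\cdots y_n^{b_n}x_n^{a_n}$ versus the paper's $x_1^{a_1}\cdots x_n^{a_n}y_1^{b_1}\cdots y_n^{b_n}$ makes no difference.

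In the necessity half you use the same quantum-plane strategy, but you go further than the paper in a way that matters. The paper ends with $q_i=(q_i\lambda_{ij})\lambda_{ij}^{-1}$ for $i<j$, which never reaches $q_n$. Its ``analogous arguments'' for $\mathcal{A}_{n}^{\mathbf q,\Lambda}$ reach no $q_i$ at all, since there every relation between generators of distinct indices involves only the $\lambda_{ij}$. Your planes $\K\langle x_n,z_n\rangle$ and $\K\langle x_i,u_i\rangle$ fill exactly this hole. The step you flagged is short: from $x_ny_n=q_ny_nx_n+z_{n-1}$ and $x_nz_{n-1}=z_{n-1}x_n$ (Lemma~\ref{lemazjcentral}),
\[
x_nz_n=(q_n-1)\,x_ny_nx_n+z_{n-1}x_n=q_nz_nx_n .
\]
The same computation with $z_{n-1}$ replaced by $1$ gives $x_iu_i=q_iu_ix_i$ in $\mathcal{A}_{n}^{\mathbf q,\Lambda}$.

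The one thing you should add is a reason why $\K\langle x_n,z_n\rangle$ is the quantum plane itself rather than a proper quotient of it, since a proper quotient can be PI whatever $q_n$ is. One way to see it:
\begin{enumerate}
\item The defining relations are homogeneous for the $\Z$-grading with $\deg x_n=1$, $\deg y_n=-1$ and all other generators of degree $0$, and $z_n$ has degree $0$.
\item Hence a relation $\sum c_{ab}z_n^ax_n^b=0$ splits into $\bigl(\sum_a c_{ab}z_n^a\bigr)x_n^b=0$ for each $b$, and the domain property cancels $x_n^b$.
\item Finally, $z_n$ is transcendental over $\K$. It is a non-scalar element of a domain over the algebraically closed field $\K$, because its $y_nx_n$-coefficient $q_n-1$ in your PBW basis is nonzero.
\end{enumerate}
The same argument works for $\K\langle x_i,u_i\rangle$.
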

\begin{proof} 
The statement is proved for $A_{n}^{\mathbf{q},\Lambda}$.

Suppose that all $q_i$ and $\lambda_{ij}$ are roots of unity, and define
\[
\ell:= \operatorname{lcm}\left\{\,\operatorname{ord}(q_i),\,\operatorname{ord}(\lambda_{ij}) \,\middle|\, 1 \le i < j \le n \right\}.
\]
From the defining relations of the algebra and by induction, the following identities are obtained:  
\[
x_i^{\ell} x_j = (q_i\lambda_{ij})^{\ell}\, x_j x_i^{\ell},\qquad
x_i^{\ell} y_j = \lambda_{ji}^{-\ell}\, y_j x_i^{\ell}.
\] 
Since $\ell$ is a multiple of the orders of $q_i$ and $\lambda_{ij}$, it follows that 
\[
x_i^{\ell} x_j = x_j x_i^{\ell},\qquad x_i^{\ell} y_j = y_j x_i^{\ell}\quad (i\neq j).
\] 
This implies that $x_i^{\ell}$ commutes with $x_j$ and $y_j$, for $j\neq i$.
In a similar way, the identities below are obtained:
\[
y_i^{\ell} x_j \;=\; (q_j \lambda_{i j})^{-\ell} x_j y_i^{\ell},\qquad
y_i^{\ell} y_j \;=\; \lambda_{ij}^{\ell} y_j y_i^{\ell}.
\] 
Since $(q_j \lambda_{ij})^{\ell}=\lambda_{ij}^{\ell}=1$, it follows that, 
\[
y_i^{\ell} x_j = x_j y_i^{\ell},\qquad y_i^{\ell} y_j = y_j y_i^{\ell}\quad (i\neq j).
\] 
Whence,  $y_i^{\ell}$ commutes with $x_j$ and $y_j$, for $j\neq i$.

For  $i=j$,  the fact that
\begin{align*} 1+q_i+\cdots+q_i^{\,\ell-1} = 0,
\end{align*}
together with Proposition \ref{identities6-2}, ensures that 
\begin{align*}   x_i^{\ell}y_i &= q_i^{\ell}y_i x_i^{\ell}
+ (1+q_i+\cdots+q_i^{\ell-1})\, z_{i-1}\,x_i^{\ell-1}=y_i x_i^{\ell},\\
x_i y_i^{\ell}&= q_i^{k}y_i^{\ell} x_i
+ (1+q_i+\cdots+q_i^{\ell-1})\, z_{i-1}\,y_i^{\,\ell-1} = y_i^{\ell} x_i.
\end{align*}
All of the above ensures that $x_i^{\ell}$  and $y_i^{\ell}$ are in $Z\!\bigl(A_{n}^{\mathbf q,\Lambda}\bigr)$, for all $1\le i\le n$. In consequence
\begin{align*}
A := \K\!\left[x_i^{\,\ell},\,y_i^{\,\ell}\right]\subseteq Z\!\big(A_{n}^{\mathbf q,\Lambda}\bigr).
\end{align*}
Let
\[
C:=\K\!\left[
x_1^\ell,\ldots,x_n^\ell,
y_1^\ell,\ldots,y_n^\ell
\right]
\subseteq Z\!\left(A_n^{\mathbf q,\Lambda}\right).
\]
We claim that $A_n^{\mathbf q,\Lambda}$ is finitely generated as a
module over $C$. Indeed, by the PBW basis of
$A_n^{\mathbf q,\Lambda}$, every element is a finite $\K$-linear
combination of ordered monomials of the form
\[
x_1^{a_1}\cdots x_n^{a_n}y_1^{b_1}\cdots y_n^{b_n},
\qquad a_i,b_i\geq 0.
\]
For each $i$, write
\[
a_i=\ell u_i+r_i,\qquad b_i=\ell v_i+s_i,
\qquad 0\leq r_i,s_i<\ell.
\]
Since $x_i^\ell$ and $y_i^\ell$ are central, each ordered monomial can be
written as
\[
x_1^{a_1}\cdots x_n^{a_n}y_1^{b_1}\cdots y_n^{b_n}
=
\left(\prod_{i=1}^n (x_i^\ell)^{u_i}
\prod_{i=1}^n (y_i^\ell)^{v_i}\right)
x_1^{r_1}\cdots x_n^{r_n}y_1^{s_1}\cdots y_n^{s_n}.
\]
Thus $A_n^{\mathbf q,\Lambda}$ is generated as a $C$-module by the finite
set
\[
\mathcal S:=
\left\{
x_1^{r_1}\cdots x_n^{r_n}
y_1^{s_1}\cdots y_n^{s_n}
\ \middle|\
0\leq r_i,s_i<\ell,\ 1\leq i\leq n
\right\}.
\]
In particular, $\#\mathcal S=\ell^{2n}$, and therefore
$A_n^{\mathbf q,\Lambda}$ is finitely generated as a module over the
central subalgebra $C$. Hence, by
\cite[\S 13.1.13, Corollary]{mcconnell}, the algebra
$A_n^{\mathbf q,\Lambda}$ satisfies the PI property.\\

Conversely, suppose that $A_{n}^{\mathbf q,\Lambda}$  satisfies the PI property. Thus every $\K$-subalgebra  
of $A_{n}^{\mathbf q,\Lambda}$
satisfies the PI property. In particular, the subalgebra $\K\langle y_i,y_j \mid y_i y_j=\lambda_{ij} y_j y_i\rangle$ which is isomorphic to the quantum plane with parameter $\lambda_{ij}$. Once again, the results in \cite[Section \S 7.1]{DeConciniProcesi1993} ensure that $\lambda_{ij}$  is a root of unity.  Similarly, the subalgebra  $\K\langle x_i,x_j \mid x_i x_j=(q_i\lambda_{ij}) x_j x_i\rangle$   is isomorphic to the quantum plane with parameter $q_i\lambda_{ij}$,  so that $q_i\lambda_{ij}$ is also a root of unity.  Since the product of roots of unity is again a root of unity, it follows that  $q_i=(q_i\lambda_{ij})\lambda_{ij}^{-1}$ is likewise a root of unity.\\
The proof for the algebra  $\mathcal{A}_{n}^{\mathbf{q},\Lambda}$ follows by analogous arguments. 
\end{proof}


\section{Bi-quadratic algebras on 3 generators}\label{bicuadráticas}
In 2023, Bavula in \cite{Bavula2023} introduced the class of skew bi-quadratic algebras and, as a special case, the class of bi-quadratic algebras, giving an explicit description of those on three generators that admit a PBW basis. The algebras in this family include, as particular instances, some quantum planes, quantum Weyl algebras, and universal enveloping algebras of three-dimensional Lie algebras (see \cite[Section 1]{Bavula2023}).

For a ring $D$ and an integer number $n\ge 2$, a family $M = (m_{ij})_{i>j}$ of elements $m_{ij}\in D$, where $1 \le j < i \le n$,  is called a \textbf{lower triangular half-matrix} with coefficients in $D$. In addition,  $L_n(D)$ will  denote the set  of all such matrices.

\begin{definicion}{\cite[\S 1, pg 696]{Bavula2023}}\label{def:SBQA}
Let $\sigma = (\sigma_{1},\dots,\sigma_{n})$ be an  $n$-tuple of commuting endomorphisms of $D$, 
$\delta = (\delta_{1},\dots,\delta_{n})$ be an $n$-tuple of $\sigma_i$-derivations of $D$
(that is, $\delta_{i}$ is a $\sigma_{i}$-derivation of $D$, for all $i\in \{1,\dots,n\}$). If $Q = (q_{ij}) \in L_{n}(Z(D))$, $\mathbb{A}:= (a_{ij,k})$, where $a_{ij,k} \in D$,
with $1 \le j < i \le n$ and $k\in\{1,\dots,n\}$, and $\mathbb{B} := (b_{ij}) \in L_{n}(D)$, the \textbf{skew bi-quadratic algebra}
(SBQA)
\[
A:= D[x_{1},\dots,x_{n};\,\sigma,\delta,Q,\mathbb{A}, \mathbb{B}]
\]
is a ring generated by $D$ and elements $x_{1},\dots,x_{n}$ subject to the following relations,
\begin{equation}\label{rel:SBQA1}
x_{i} a = \sigma_{i}(a) x_{i} + \delta_{i}(a),
\quad \text{for } i=1,\dots,n,\ \text{and all } a \in D,
\tag{*}
\end{equation}
\begin{equation}\label{rel:SBQA2}
x_{i} x_{j} - q_{ij} x_{j} x_{i}
= \sum_{k=1}^{n} a_{ij,k}\, x_{k} + b_{ij},
\quad \text{for all } j < i.
\tag{**}
\end{equation}
If $\sigma_{i} = \mathrm{id}_{D}$ and $\delta_{i} = 0$ for $i=1,\dots,n$, the ring $A$ is called the 
\textbf{bi-quadratic algebra} (BQA) and it is denoted by
\[
A:= D[x_{1},\dots,x_{n};\,Q,\mathbb{A},\mathbb{B}].
\]
Finally, the algebra $A$ is said to have a \textbf{PBW basis} if
\begin{align*}
A = \bigoplus_{\alpha \in \mathbb{N}^{n}} D x^{\alpha},
\text{ where }
x^{\alpha}:= x_{1}^{\alpha_{1}} \cdots x_{n}^{\alpha_{n}}.
\end{align*}
\end{definicion}
The set $\mathrm{Mon}(A):=\{x^{\alpha} \mid \alpha \in \mathbb{N}^n\}$ of standard monomials over $A$ is a linear order under the degree-lexicographic ordering. Specifically, in \cite{Bavula2023}, Bavula establishes the order $x_{1}<\cdots<x_{n}$ on the variables. Thus, if $A=D[x_{1},\ldots,x_{n};Q,\mathbb{A},\mathbb{B}]$ is a bi-quadratic algebra with $n\ge 3$, then for each triple $i,j,k\in\{1,\ldots,n\}$ such that $i<j<k$, there are exactly two distinct ways to reduce the product $x_{k}x_{j}x_{i}$ with respect to this ordering:
\begin{align}\label{solapamiento}
x_{k}x_{j}x_{i}
  &= q_{kj}q_{ki}q_{ji}x_{i}x_{j}x_{k}
    + \sum_{|\alpha|\le 2} c_{k,j,i,\alpha} x^{\alpha},\\
x_{k}x_{j}x_{i}
  &= q_{kj}q_{ki}q_{ji}x_{i}x_{j}x_{k}
    + \sum_{|\alpha|\le 2} c'_{k,j,i,\alpha} x^{\alpha}.\notag
\end{align}
Regarding the PBW basis property, it is proved in \cite[Theorem 1.1]{Bavula2023} that the defining relations (\ref{rel:SBQA1}) and  (\ref{rel:SBQA2}) of the algebra $A$ are consistent (that is, the two reduction paths for $x_k x_j x_i$ described above in (\ref{solapamiento}) yield the same result, so that $A$ does not collapse to the trivial algebra) and that $A= \bigoplus_{\alpha\in\mathbb{N}^{n}} D x^{\alpha}$ if and only if, for all triples $i,j,k\in\{1,\ldots,n\}$ such that $i<j<k$, the condition
 $c_{k,j,i,\alpha}=c'_{k,j,i,\alpha}$ is satisfied.

Let $A = \K[x_{1},x_{2},x_{3};Q,\mathbb{A},\mathbb{B}]$ be a bi-quadratic algebra, where $Q=(q_{1},q_{2},q_{3})\in (\K^{*})^{3}$,
\[
\mathbb{A}=
\begin{bmatrix}
a & b & c\\
\alpha & \beta & \gamma\\
\lambda & \mu & \nu
\end{bmatrix}
\qquad\text{and}\qquad
\mathbb{B}=
\begin{bmatrix}
b_{1}\\
b_{2}\\
b_{3}
\end{bmatrix}.
\]

The algebra $A$ is generated over $\K$ by the elements $x_{1}$, $x_{2}$ and $x_{3}$ subject to the relations
\begin{align}
x_{2}x_{1}-q_{1}x_{1}x_{2} &= a x_{1}+b x_{2}+c x_{3}+b_{1}, \label{eq:2.9}\\
x_{3}x_{1}-q_{2}x_{1}x_{3} &= \alpha x_{1}+\beta x_{2}+\gamma x_{3}+b_{2}, \label{eq:2.10}\\
x_{3}x_{2}-q_{3}x_{2}x_{3} &= \lambda x_{1}+\mu x_{2}+\nu x_{3}+b_{3}. \label{eq:2.11}
\end{align}
The PBW property for this particular class of bi-quadratic algebras is stated below.
\begin{teorema}{\cite[Theorem 1.3]{Bavula2023}}
Suppose that the algebra $A$ is generated over a field $\K$ by elements $x_1$, $x_2$, $x_3$ satisfying relations \eqref{eq:2.9}, \eqref{eq:2.10} and \eqref{eq:2.11}. Then the defining relations are consistent and
\begin{align*}
A=\bigoplus_{\alpha\in\mathbb{N}^3} \K x^\alpha,
\text{ where } x^\alpha=x_1^{\alpha_1}x_2^{\alpha_2}x_3^{\alpha_3},
\end{align*}
if and only if the following conditions hold:
\begin{gather*}
(1-q_3)\alpha = (1-q_2)\mu, \\
(1-q_3)a = (1-q_1)\nu, \\
(1-q_2)b = (1-q_1)\gamma, \\
(1-q_1q_2)\lambda = 0, \\
(q_1-q_3)\beta = 0, \\
(1-q_2q_3)c = 0, \\
((1-q_3)a-\mu)a+(b+q_1\gamma)\lambda-\nu\alpha+(q_1q_2-1)b_3 = 0, \\
(a-\nu)\beta+q_1\gamma\mu-q_3\alpha b+(q_1-q_3)b_2 = 0, \\
a+(q_1-1)\nu\gamma+b\gamma-(\mu+q_3\alpha)c+(1-q_2q_3)b_1 = 0, \\
-(\mu+q_3\alpha)b_1+(a-\nu)b_2+(b+q_1\gamma)b_3 = 0.
\end{gather*}

In addition, if $A=\bigoplus_{\alpha\in\mathbb{N}^3} \K x^\alpha$, with
$x^\alpha=x_1^{\alpha_1}x_2^{\alpha_2}x_3^{\alpha_3}$, then
\[
A=\bigoplus_{\alpha\in\mathbb{N}^3} \K x_\sigma^\alpha
\]
for all $\sigma\in S_3$,where
\[
x_\sigma^\alpha:=x_{\sigma(1)}^{\alpha_1}x_{\sigma(2)}^{\alpha_2}x_{\sigma(3)}^{\alpha_3}.
\]
\end{teorema}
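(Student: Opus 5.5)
The plan is to use the general criterion of \cite[Theorem 1.1]{Bavula2023}, recalled above. By that result, the relations \eqref{eq:2.9}--\eqref{eq:2.11} are consistent and $\{x^\alpha\}$ is a $\K$-basis if and only if the two reductions in (\ref{solapamiento}) of the only ambiguity $x_3x_2x_1$ agree. So I would compute both reductions explicitly with respect to the order $x_1<x_2<x_3$.

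In the first reduction I would rewrite $x_3x_2$ first, then $x_3x_1$, then $x_2x_1$. In the second reduction I would start with $x_2x_1$, then $x_3x_1$, then $x_3x_2$. Each time a product $x_kx_j$ with $k>j$ appears, I replace it by $q\,x_jx_k$ plus the linear tail. Each degree-two term produced by a tail must then be brought to standard form again, so products like $x_3x_1$ and $x_2x_1$ created inside the tails are reduced once more.

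Both reductions give $q_1q_2q_3\,x_1x_2x_3$ plus a combination of $x_1x_2$, $x_1x_3$, $x_2x_3$, $x_1^2$, $x_2^2$, $x_3^2$, $x_1$, $x_2$, $x_3$ and $1$. I would equate coefficients monomial by monomial.
\begin{itemize}
\item The squares $x_1^2$, $x_2^2$, $x_3^2$ should give the three conditions $(1-q_1q_2)\lambda=0$, $(q_1-q_3)\beta=0$ and $(1-q_2q_3)c=0$. For example, $c\,x_3$ inside $x_2x_1$ multiplied on the left by $x_3$ gives $c\,x_3^2$ in one reduction and $q_2q_3 c\, x_3^2$ in the other.
\item The mixed monomials $x_1x_2$, $x_1x_3$, $x_2x_3$ should give the three linear relations $(1-q_3)\alpha=(1-q_2)\mu$, $(1-q_3)a=(1-q_1)\nu$ and $(1-q_2)b=(1-q_1)\gamma$.
\item The degree-one terms $x_1$, $x_2$, $x_3$ and the constant term should give the remaining four quadratic conditions.
\end{itemize}

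For the final claim about $x_\sigma^\alpha$, I would argue by a filtration. Give every generator degree one. The associated graded algebra is the skew polynomial ring with $x_jx_i=q^{\pm1}x_ix_j$, in which every ordering of monomials is a basis. Since the relations are of the form (quadratic skew-commutation) + (lower degree), the ordered monomials $x_\sigma^\alpha$ span $A$ modulo lower filtration degrees. They are linearly independent because their leading forms are independent in the graded algebra, which has the same Hilbert series as $A$ because the PBW basis exists.

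The main obstacle is the bookkeeping in the degree-one and constant coefficients. There, second-order tails combine products of the structure constants, and a sign or a missing factor of $q_i$ is easy to make. To check the computation, I would specialize to known cases such as $U(\mathfrak{sl}_2)$ and the quantum Weyl algebras.
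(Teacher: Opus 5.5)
Your plan follows the route the paper relies on. By Bavula's Theorem 1.1, recalled just before the statement, everything reduces to the single overlap $x_3x_2x_1$, and your two reduction orders are the right ones. Your matching of coefficients is also correct: $x_1^2,x_2^2,x_3^2$ give the $\lambda,\beta,c$ conditions, and $x_1x_2,x_1x_3,x_2x_3$ give the three linear relations, after cancelling $q_1$, $q_2$, $q_3$ respectively. The paper gives no argument for the final claim about $x_\sigma^\alpha$. Your filtration argument for it is sound, provided you note that $\operatorname{gr}A$ is a priori only a quotient of the quantum affine space; it becomes isomorphic to it through the dimension count that the PBW basis supplies.

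The step that fails as stated is your expectation that the degree-one and constant coefficients reproduce the last four printed conditions. In $(x_3x_2)x_1$ the coefficients of $x_1,x_2,x_3,1$ are
\begin{gather*}
q_3\alpha a+\mu a+\nu\alpha+b_3,\qquad q_3(\alpha b+b_2)+\mu b+\nu\beta,\\
q_2q_3b_1+(q_3\alpha+\mu)c+\nu\gamma,\qquad (q_3\alpha+\mu)b_1+\nu b_2,
\end{gather*}
and in $x_3(x_2x_1)$ they are
\begin{gather*}
q_1q_2b_3+(b+q_1\gamma)\lambda+a\alpha,\qquad q_1(\gamma\mu+b_2)+a\beta+b\mu,\\
(a+q_1\nu)\gamma+b\nu+b_1,\qquad (b+q_1\gamma)b_3+ab_2.
\end{gather*}
The $x_2$- and constant coefficients give exactly the eighth and tenth printed conditions. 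The $x_1$- and $x_3$-coefficients instead give
\begin{gather*}
((1-q_3)\alpha-\mu)a+(b+q_1\gamma)\lambda-\nu\alpha+(q_1q_2-1)b_3=0,\\
(a+(q_1-1)\nu)\gamma+b\nu-(\mu+q_3\alpha)c+(1-q_2q_3)b_1=0,
\end{gather*}
whereas the statement prints $((1-q_3)a-\mu)a$ in the seventh condition and $a+(q_1-1)\nu\gamma+b\gamma$ in the ninth.

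These are misprints, and the printed list is genuinely false, not merely an equivalent variant. Your own test case already exposes this. Take $U(\mathfrak{sl}_2)$ with $x_1=e$, $x_2=h$, $x_3=f$, so that all $q_i=1$, $a=\nu=2$, $\beta=-1$, and all other parameters are $0$. This algebra has a PBW basis, yet the printed ninth condition reads $2=0$ whenever $\operatorname{char}\K\neq 2$.

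So your method proves the corrected statement. Do not try to force your bookkeeping to agree with the printed seventh and ninth lines.
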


Henceforth, the term «bi-quadratic algebra» means bi-quadratic algebra with PBW basis. An important family of bi-quadratic algebras was introduced by Bavula in \cite{Bavula2024}. These algebras, called \textbf{3-cyclic quantum Weyl algebras}, arose naturally in the author's work on the classification of Harish--Chandra modules over the quantized Lorentz algebra (see \cite{Bavula2024} and references therein).
For $\alpha$, $\beta$, $\gamma \in \K$, the 3-cyclic quantum Weyl algebra $A(\alpha, \beta, \gamma)$ is the $\K$-algebra generated by $x$, $y$, $z$ subject to the defining relations
\begin{equation}\label{relaciones-Aabg}
xy = q^{2}yx + \alpha, \qquad xz = q^{-2}zx + \beta, \qquad yz = q^{2}zy + \gamma.
\end{equation}
The algebra $A(\alpha,\beta,\gamma)$ admits a presentation as an iterated Ore extension of the form $\K[x][y;\sigma_1,\delta_1][z;\sigma_2,\delta_2]$, where $\sigma_1$ is the automorphism of $\K[x]$ defined by $\sigma_1(x) = q^{-2}x$, and $\delta_1$ is the $\sigma_1$-derivation of $\K[x]$ satisfying $\delta_1(x) = -q^{-2}\alpha$. In turn, $\sigma_2$ is the automorphism of $\K[x][y;\sigma_1,\delta_1]$ determined by $\sigma_2(x) = q^{2}x$ and $\sigma_2(y) = q^{-2}y$, and $\delta_2$ is the $\sigma_2$-derivation of $\K[x][y;\sigma_1,\delta_1]$ satisfying $\delta_2(x) = -q^{2}\beta$ and $\delta_2(y) = -q^{-2}\gamma$. In particular, the algebra $A(\alpha,\beta,\gamma)$ is a Noetherian domain of Gelfand--Kirillov dimension $3$.

For each integer a $\geq 1$, define the coefficients
\[
c_{a}:=\frac{1-q^{2a}}{1-q^{2}},\qquad
d_{a}:=\frac{1-q^{-2a}}{1-q^{-2}}.
\]
In order to study the PI property for bi-quadratic algebras, it is necessary to consider the following identities satisfied by these algebras.
\begin{teorema}{\cite[Theorem 2.1]{BeraMandalMukherjeeNandy2024}}\label{identidesbicuadraticas}
Assume that $q^2\neq 1$. The following identities hold in $A(\alpha,\beta,\gamma)$,
for all $a\in\mathbb{Z}^{+}$:
\begin{enumerate}[\rm (i)]
\item $\displaystyle
x^{a}y = q^{2a}yx^{a}
          + c_{a}\,\alpha x^{a-1}.
  $

  \item $\displaystyle
  y^{a}x = q^{-2a}xy^{a}
          - q^{-2}d_{a}\,\alpha y^{a-1}.
  $

  \item $\displaystyle
  x^{a}z = q^{-2a}zx^{a}
          + d_{a}\,\beta x^{a-1}.
  $

  \item  $\displaystyle
  z^{a}x = q^{2a}xz^{a}
          - q^{2}c_{a}\,\beta z^{a-1}.
  $

  \item  $\displaystyle
  y^{a}z = q^{2a}zy^{a}
          + c_{a}\,\gamma y^{a-1}.
  $

  \item $\displaystyle
  z^{a}y = q^{-2a}yz^{a}
          - q^{-2}d_{a}\,\gamma z^{a-1}.
  $
\end{enumerate}
\end{teorema}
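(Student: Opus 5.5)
Each of the six identities will be proved by induction on $a$, in the same way as Lemma \ref{lema3.7} and Lemma \ref{lem:B2}. For $a=1$, note $c_1=d_1=1$, so (i), (iii) and (v) are exactly the defining relations \eqref{relaciones-Aabg}: $xy=q^2yx+\alpha$, $xz=q^{-2}zx+\beta$, $yz=q^2zy+\gamma$. For (ii), (iv) and (vi) I first solve the defining relations for the reversed products:
\[
yx=q^{-2}xy-q^{-2}\alpha,\qquad zx=q^{2}xz-q^{2}\beta,\qquad zy=q^{-2}yz-q^{-2}\gamma .
\]
These are precisely the case $a=1$ of (ii), (iv) and (vi).

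The inductive steps need two recursions for the coefficients. Since $c_a=1+q^2+\cdots+q^{2(a-1)}$ and $d_a=1+q^{-2}+\cdots+q^{-2(a-1)}$, we have
\[
c_{a+1}=q^{2a}+c_a=1+q^2c_a,\qquad d_{a+1}=q^{-2a}+d_a=1+q^{-2}d_a .
\]
The denominators $1-q^{\pm2}$ are nonzero because $q^2\neq1$.

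For (i), assume $x^ay=q^{2a}yx^a+c_a\alpha x^{a-1}$ and multiply on the left by $x$:
\[
x^{a+1}y=q^{2a}(xy)x^a+c_a\alpha x^a=q^{2a}(q^2yx+\alpha)x^a+c_a\alpha x^a=q^{2(a+1)}yx^{a+1}+(q^{2a}+c_a)\alpha x^a .
\]
The last coefficient is $c_{a+1}$, as required. Identities (iii) and (v) follow by the same computation, with the pairs $(x,z)$ and $(y,z)$, the parameters $q^{-2}$ and $q^{2}$, and the constants $\beta$ and $\gamma$; the recursion used is the one for $d_a$ or $c_a$ respectively.

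For (ii), write $y^{a+1}x=y(y^ax)$ and substitute the inductive hypothesis together with $yx=q^{-2}xy-q^{-2}\alpha$:
\[
y^{a+1}x=q^{-2a}(q^{-2}xy-q^{-2}\alpha)y^a-q^{-2}d_a\alpha y^a=q^{-2(a+1)}xy^{a+1}-q^{-2}(q^{-2a}+d_a)\alpha y^a .
\]
The coefficient is $-q^{-2}d_{a+1}$. Identities (iv) and (vi) are handled identically, using $zx=q^2xz-q^2\beta$ with the recursion $q^{2a}+c_a=c_{a+1}$, and $zy=q^{-2}yz-q^{-2}\gamma$ with the recursion for $d_a$.

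Every step involves only one pair of generators, so no reordering of a third variable arises. The main point requiring care is the bookkeeping of the scalar prefactors, namely the $-q^{-2}$ and $-q^{2}$ in the reversed relations. In each inductive step one must check that the factor multiplying the constant term is $q^{\pm 2a}$, so that the right recursion ($c$ or $d$) is the one that closes the induction.
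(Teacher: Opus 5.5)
Your proposal is correct and follows the paper's proof essentially verbatim. Both argue by induction on $a$, take the base cases from the defining relations (and their solved-for reversed forms), multiply on the left by a generator, and close each step with the recursions $q^{2a}+c_a=c_{a+1}$ and $q^{-2a}+d_a=d_{a+1}$, with the prefactors $-q^{-2}$ and $-q^{2}$ in (ii), (iv), (vi) handled correctly.
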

\begin{proof}
Each of the identities is established by induction on
$a\in\mathbb{Z}^{+}$. The defining relations of $A(\alpha,\beta,\gamma)$ yield the identities for $a=1$, see  (\ref{relaciones-Aabg}).

\begin{itemize}
\item[(i)] Note that,
\begin{align*}
q^{2a}+c_a
&= q^{2a} + \frac{1-q^{2a}}{1-q^{2}}
 = \frac{q^{2a}(1-q^{2}) + 1 - q^{2a}}{1-q^{2}} \\
&= \frac{1-q^{2(a+1)}}{1-q^{2}}
 = c_{a+1}.
\end{align*}
Hence,
 \[
\begin{aligned}
x^{a+1}y
  &= x(x^{a}y)
   = x\big(q^{2a}yx^{a} + c_{a}\,\alpha x^{a-1}\big) \\
  &= q^{2a}x y x^{a} + c_{a}\,\alpha x^{a}.
\end{aligned}
\]
Using the relation $xy = q^{2}yx + \alpha$, one obtains 
\[
\begin{aligned}
x^{a+1}y
 &= q^{2a}(q^{2}yx + \alpha)x^{a} + c_{a}\,\alpha x^{a} \\
 &= q^{2(a+1)}yx^{a+1}
    + q^{2a}\alpha x^{a} + c_{a}\,\alpha x^{a} \\
 &= q^{2(a+1)}yx^{a+1}
    + (q^{2a}+c_{a})\,\alpha x^{a}\\
 &= q^{2(a+1)}yx^{a+1} + c_{a+1}\,\alpha x^{a}.
\end{aligned}
\]

\item[(ii)] Observe that,
\begin{align*}
q^{-2a}+d_a
&= q^{-2a} + \frac{1-q^{-2a}}{1-q^{-2}}\\
&=  \frac{q^{-2a}(1-q^{-2}) + 1 - q^{-2a}}{1-q^{-2}} \\
&= \frac{1-q^{-2(a+1)}}{1-q^{-2}}
 = d_{a+1}.
\end{align*}
Thus,
\[
\begin{aligned}
y^{a+1}x
  &= y(y^{a}x)
   = y\big(q^{-2a}xy^{a} - q^{-2}d_{a}\,\alpha y^{a-1}\big) \\
  &= q^{-2a}yxy^{a} - q^{-2}d_{a}\,\alpha y^{a}.
\end{aligned}
\]
Using the identity  $yx = q^{-2}xy - q^{-2}\alpha$, the following is obtained
\[
\begin{aligned}
y^{a+1}x
 &= q^{-2a}\big(q^{-2}xy - q^{-2}\alpha\big)y^{a}
    - q^{-2}d_{a}\,\alpha y^{a} \\
 &= q^{-2(a+1)}xy^{a+1}
    - q^{-2(a+1)}\alpha y^{a}
    - q^{-2}d_{a}\,\alpha y^{a} \\
 &= q^{-2(a+1)}xy^{a+1}
    - q^{-2}\big(q^{-2a} + d_{a}\big)\alpha y^{a} \\
 &= q^{-2(a+1)}xy^{a+1}
    - q^{-2}d_{a+1}\,\alpha y^{a}.
\end{aligned}
\]

\item[(iii)] From the equality  $q^{-2a} + d_{a}  = d_{a+1}$ one sees that  
\[
\begin{aligned}
x^{a+1}z
  &= x(x^{a}z)
   = x\big(q^{-2a}zx^{a} + d_{a}\,\beta x^{a-1}\big) \\
  &= q^{-2a}xzx^{a} + d_{a}\,\beta x^{a}.
\end{aligned}
\]

The relation $xz = q^{-2}zx + \beta$, yields 
\[
\begin{aligned}
x^{a+1}z
 &= q^{-2a}\big(q^{-2}zx + \beta\big)x^{a} + d_{a}\,\beta x^{a} \\
 &= q^{-2(a+1)}zx^{a+1}
    + q^{-2a}\beta x^{a}
    + d_{a}\,\beta x^{a} \\
 &= q^{-2(a+1)}zx^{a+1}
    + \big(q^{-2a} + d_{a}\big)\beta x^{a} \\
 &= q^{-2(a+1)}zx^{a+1}
    + d_{a+1}\,\beta x^{a}.
\end{aligned}
\]

\item[(iv)] Using the equality $q^{2a}+c_{a}= c_{a+1}$ established above, the following holds
\[
\begin{aligned}
z^{a+1}x
  &= z(z^{a}x)
   = z\big(q^{2a}xz^{a} - q^{2}c_{a}\,\beta z^{a-1}\big) \\
  &= q^{2a}z x z^{a} - q^{2}c_{a}\,\beta z^{a}.
\end{aligned}
\]
In consequence, 
\[
\begin{aligned}
z^{a+1}x
 &= q^{2a}\big(q^{2}xz - q^{2}\beta\big)z^{a}
    - q^{2}c_{a}\,\beta z^{a} \\
 &= q^{2(a+1)}xz^{a+1}
    - q^{2(a+1)}\beta z^{a}
    - q^{2}c_{a}\,\beta z^{a} \\
 &= q^{2(a+1)}xz^{a+1}
    - q^{2}\big(q^{2a} + c_{a}\big)\beta z^{a} \\
 &= q^{2(a+1)}xz^{a+1}
    - q^{2}c_{a+1}\,\beta z^{a}.
\end{aligned}
\]

\item[(v)] Once again, using the fact that  $q^{2a} + c_{a}  = c_{a+1}$, one obtains
\[
\begin{aligned}
y^{a+1}z
  &= y(y^{a}z)
   = y\big(q^{2a}zy^{a} + c_{a}\,\gamma y^{a-1}\big) \\
  &= q^{2a}yzy^{a} + c_{a}\,\gamma y^{a}.
\end{aligned}
\]
In this way,
\[
\begin{aligned}
y^{a+1}z
 &= q^{2a}\big(q^{2}zy + \gamma\big)y^{a}
    + c_{a}\,\gamma y^{a} \\
 &= q^{2(a+1)}zy^{a+1}
    + q^{2a}\gamma y^{a}
    + c_{a}\,\gamma y^{a} \\
 &= q^{2(a+1)}zy^{a+1}
    + \big(q^{2a} + c_{a}\big)\gamma y^{a} \\
 &= q^{2(a+1)}zy^{a+1}
    + c_{a+1}\,\gamma y^{a}.
\end{aligned}
\]
\item[(vi)] Finally, since $q^{-2a} + d_{a}  = d_{a+1}$, 
\[
\begin{aligned}
z^{a+1}y
  &= z(z^{a}y)
   = z\big(q^{-2a}yz^{a} - q^{-2}d_{a}\,\gamma z^{a-1}\big) \\
  &= q^{-2a}zyz^{a} - q^{-2}d_{a}\,\gamma z^{a}.
\end{aligned}
\]

Therefore, 
\[
\begin{aligned}
z^{a+1}y
 &= q^{-2a}\big(q^{-2}yz - q^{-2}\gamma\big)z^{a}
    - q^{-2}d_{a}\,\gamma z^{a} \\
 &= q^{-2(a+1)}yz^{a+1}
    - q^{-2(a+1)}\gamma z^{a}
    - q^{-2}d_{a}\,\gamma z^{a} \\
 &= q^{-2(a+1)}yz^{a+1}
    - q^{-2}\big(q^{-2a} + d_{a}\big)\gamma z^{a} \\
 &= q^{-2(a+1)}yz^{a+1}
    - q^{-2}d_{a+1}\,\gamma z^{a}.
\end{aligned}
\]
\end{itemize}
\end{proof}

\begin{corolario}{\cite[Corollary 2.2]{BeraMandalMukherjeeNandy2024}}\label{cor:2.2}
Assume that $q^2\neq 1$. If $q^{2}$ is a primitive $l$-th root of unity, then $x^{l}$, $y^{l}$ and $z^{l}$ are central elements of $A(\alpha,\beta,\gamma)$.
\end{corolario}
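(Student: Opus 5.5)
Since $x$, $y$, $z$ generate $A(\alpha,\beta,\gamma)$, it is enough to show that each of $x^{l}$, $y^{l}$, $z^{l}$ commutes with the other two generators; each power trivially commutes with its own generator. The only input is Theorem~\ref{identidesbicuadraticas} with $a=l$, together with the vanishing of the coefficients $c_l$ and $d_l$.

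First we record the scalar facts. Since $q^{2}$ is a primitive $l$-th root of unity, $q^{2l}=1$ and $q^{-2l}=1$. The hypothesis $q^{2}\neq 1$ makes the denominators $1-q^{2}$ and $1-q^{-2}$ nonzero, so
\[
c_l=\frac{1-q^{2l}}{1-q^{2}}=0,\qquad d_l=\frac{1-q^{-2l}}{1-q^{-2}}=0 .
\]
Equivalently, $1+q^{2}+\cdots+q^{2(l-1)}=0$.

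Next we substitute $a=l$ into the six identities.
\begin{itemize}
\item From (i) and (iii): $x^{l}y=yx^{l}$ and $x^{l}z=zx^{l}$, so $x^{l}$ is central.
\item From (ii) and (v): $y^{l}x=xy^{l}$ and $y^{l}z=zy^{l}$, so $y^{l}$ is central.
\item From (iv) and (vi): $z^{l}x=xz^{l}$ and $z^{l}y=yz^{l}$, so $z^{l}$ is central.
\end{itemize}
In each case the leading coefficient $q^{\pm 2l}$ equals $1$. The correction term carries a factor $c_l$ or $d_l$, possibly multiplied by a unit $q^{\pm 2}$, and therefore vanishes.

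There is no real obstacle. The one point needing care is to use the hypothesis $q^{2}\neq1$ so that the quotient formulas for $c_l$ and $d_l$ are well defined, and hence to justify reading $c_l=d_l=0$ off from $q^{2l}=1$.
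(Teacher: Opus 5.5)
Your proof is correct and is the same argument as the paper's: set $a=l$ in the six identities of Theorem~\ref{identidesbicuadraticas}, then use $q^{\pm 2l}=1$ and $c_l=d_l=0$ (where the hypothesis $q^{2}\neq 1$ is needed). You pair identities (iv) and (vi) for $z^{l}$, which is the right choice; the paper's text cites (iv) and (v) at that step, which is a slip.
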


\begin{proof}
Suppose that $q^{2}$ is a primitive $l$-th root of unity.
In particular,
\[
1-q^{2l}=0,
\qquad
1-q^{-2l}=0.
\] 
For $a=l$, the identities established in Theorem \ref{identidesbicuadraticas} yield:
\begin{align*}
x^{l}y
  &= q^{2l}yx^{l}
    + c_a\,\alpha x^{l-1}\\
  &= 1\cdot yx^{l} + 0\cdot x^{l-1}\\
  &= yx^{l},\\
  x^{l}z
  &= q^{-2l}zx^{l}
    + d_a\,\beta x^{l-1}\\
  &= 1\cdot zx^{l} + 0\cdot x^{l-1}\\
  &= zx^{l},
\end{align*}
thus $x^{l}$ commutes with the generators $x$, $y$, $z$, and hence $x^{l} \in Z(A(\alpha,\beta,\gamma))$.

By analogous arguments, the following equalities are obtained:
\begin{align*}
y^{l}x
  &= q^{-2l}xy^{l}
    - q^{-2}d_a\,\alpha y^{l-1}\\
  &= 1\cdot xy^{l} - q^{-2}\cdot 0\cdot y^{l-1}\\
  &= xy^{l},\\
y^{l}z
  &= q^{2l}zy^{l}
    + c_a\,\gamma y^{l-1}\\
  &= 1\cdot zy^{l} + 0\cdot y^{l-1}\\
  &= zy^{l},
  \end{align*}
  which allows one to conclude that  $y^{l}\in Z(A(\alpha,\beta,\gamma))$.
Finally, for $a=l$ in identities (iv) and (v) of the Theorem \ref{identidesbicuadraticas}, one obtains
\[
z^{l}x
  = q^{2l}xz^{l}
    - q^{2}c_l\,\beta z^{l-1}
  = 1\cdot xz^{l} - q^{2}\cdot 0\cdot z^{l-1}
  = xz^{l},
\]
and
\[
z^{l}y
  = q^{-2l}yz^{l}
    - q^{-2}\,\gamma z^{l-1}
  = 1\cdot yz^{l} - q^{-2}\cdot 0\cdot z^{l-1}
  = yz^{l}.
\]
Thus $z^{l}$ commutes with $x$ and $y$, and hence $z^{l}\in Z(A(\alpha,\beta,\gamma))$.

\end{proof}

\begin{lema}\label{lema:ez-q-2ze}
Assume that $q^2\neq 1$. Let $A(\alpha,\beta,\gamma)$ be the 3-cyclic quantum Weyl algebra and \[
e := xz - \frac{q^{2}\beta}{q^{2}-1}.
\]
Then the following identity holds:
\[
ez = q^{-2}ze.
\]
\end{lema}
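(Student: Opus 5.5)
The plan is a direct computation. Set $\kappa:=\dfrac{q^{2}\beta}{q^{2}-1}$, which is well defined since $q^2\neq 1$, so that $e=xz-\kappa$. Since $\kappa$ is a scalar, the left-hand side is
\[
ez = xz^{2}-\kappa z .
\]
The key step is to express $ze$ in the ordered form $xz^{2}+(\text{scalar})\,z$ and compare.

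From the defining relation $xz=q^{-2}zx+\beta$ in (\ref{relaciones-Aabg}), solving for $zx$ gives
\[
zx=q^{2}xz-q^{2}\beta ,
\]
which is also the case $a=1$ of Theorem~\ref{identidesbicuadraticas}(iv). Then I will expand
\[
q^{-2}ze=q^{-2}(zx)z-q^{-2}\kappa z
= q^{-2}\bigl(q^{2}xz-q^{2}\beta\bigr)z-q^{-2}\kappa z
= xz^{2}-\bigl(\beta+q^{-2}\kappa\bigr)z .
\]

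Comparing with $ez=xz^{2}-\kappa z$, the identity $ez=q^{-2}ze$ reduces to the scalar equation
\[
\kappa=\beta+q^{-2}\kappa ,\qquad\text{equivalently}\qquad \kappa\,(1-q^{-2})=\beta .
\]
This is exactly what the choice of $\kappa$ guarantees, because $\kappa(1-q^{-2})=\dfrac{q^{2}\beta}{q^{2}-1}\cdot\dfrac{q^{2}-1}{q^{2}}=\beta$. There is no genuine obstacle. The only care needed is to rewrite $zx$ correctly via the relation (a sign or power-of-$q$ slip there would break the match) and to check the final scalar identity. The hypothesis $q^{2}\neq 1$ is used only to make $\kappa$ well defined.
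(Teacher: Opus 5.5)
Your proof is correct and is essentially the paper's argument: a direct computation from the relation $xz=q^{-2}zx+\beta$ that reduces the identity to the scalar check $\kappa(1-q^{-2})=\beta$. The only difference is cosmetic. You rewrite $zx$ as $q^{2}xz-q^{2}\beta$ inside $q^{-2}ze$, whereas the paper rewrites $xz^{2}$ as $q^{-2}zxz+\beta z$ inside $ez$.
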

\begin{proof}
First of all, 
\[
\begin{aligned}
ez
  &= \Bigl(xz - \frac{q^{2}\beta}{q^{2}-1}\Bigr)z 
  = xz^{2} - \frac{q^{2}\beta}{q^{2}-1}\,z.
\end{aligned}
\] 
Using the relation $xz = q^{-2}zx + \beta$, one obtains
\[
\begin{aligned}
xz^{2}
  &= (xz)z
   = (q^{-2}zx + \beta)z \\
  &= q^{-2}zxz + \beta z.
\end{aligned}
\]
Substituting into the expression for $ez$, it follows that
\[
\begin{aligned}
ez
 &= q^{-2}zxz + \beta z
    - \frac{q^{2}\beta}{q^{2}-1}\,z \\
 &= q^{-2}zxz + \left(1 - \frac{q^{2}}{q^{2}-1}\right)\beta z \\
 &= q^{-2}zxz - \frac{\beta}{q^{2}-1}\,z.
\end{aligned}
\] On the other hand,
\[
\begin{aligned}
q^{-2}ze
  &= q^{-2}z\Bigl(xz - \frac{q^{2}\beta}{q^{2}-1}\Bigr) \\
  &= q^{-2}zxz - q^{-2}\frac{q^{2}\beta}{q^{2}-1}\,z \\
  &= q^{-2}zxz - \frac{\beta}{q^{2}-1}\,z.
\end{aligned}
\]
It is thus concluded that
\[
ez = q^{-2}ze,
\]
\end{proof}

\begin{teorema}{\cite[Theorem 3.2]{BeraMandalMukherjeeNandy2024}}\label{thm:3.2}
Assume that $q^2\neq 1$. The algebra $A(\alpha,\beta,\gamma)$ is a
PI algebra if and only if $q^2$ is a root of unity.
\end{teorema}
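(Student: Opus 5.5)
The plan is to prove the two implications separately, following the pattern used earlier in the paper for the quantum Heisenberg and quantized Weyl algebras.

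\emph{Sufficiency.} Suppose $q^2$ is a root of unity, say a primitive $l$-th root. By Corollary~\ref{cor:2.2}, the elements $x^l$, $y^l$ and $z^l$ are central. Hence
\[
R:=\K[x^l,y^l,z^l]\subseteq Z(A(\alpha,\beta,\gamma)).
\]
Since $A(\alpha,\beta,\gamma)$ is an iterated Ore extension, the ordered monomials $x^ay^bz^c$ form a $\K$-basis. Write $a=ul+i$, $b=vl+j$ and $c=wl+k$ with $0\le i,j,k<l$. Because the $l$-th powers are central, they can be pulled out of the monomial, giving
\[
x^ay^bz^c=(x^l)^u(y^l)^v(z^l)^w\,x^iy^jz^k .
\]
So the algebra is a finitely generated $R$-module, spanned by the $l^3$ monomials $x^iy^jz^k$. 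Then \cite[\S 13.1.13, Corollary]{mcconnell} shows that it is PI.

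\emph{Necessity.} Assume $A(\alpha,\beta,\gamma)$ is PI. The aim is to find a subalgebra isomorphic to a quantum plane whose parameter is a power of $q^2$. The natural candidate comes from Lemma~\ref{lema:ez-q-2ze}: the element
\[
e=xz-\tfrac{q^2\beta}{q^2-1}
\]
satisfies $ez=q^{-2}ze$. Consider the subalgebra $\K\langle e,z\rangle$. It is PI, since subalgebras of PI algebras are PI. If it is isomorphic to $\mathcal O_{q^{-2}}(\K^2)$, then \cite[Section \S 7.1]{DeConciniProcesi1993} forces $q^{-2}$, and hence $q^2$, to be a root of unity.

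\emph{Main obstacle.} The real work is showing that the natural surjection $\mathcal O_{q^{-2}}(\K^2)\to\K\langle e,z\rangle$ is injective. This amounts to showing that the monomials $z^ie^j$ are linearly independent in $A(\alpha,\beta,\gamma)$.

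I would argue via leading terms in the PBW basis, ordered as $z^cx^a y^b$ (any order is allowed by Bavula's theorem, or directly from the Ore extension presentation). Modulo lower-degree terms, $e^j$ has leading term a nonzero scalar times $z^jx^j$. This holds because $xz=q^{-2}zx+\beta$, so reordering only introduces terms of lower total degree. Consequently, $z^ie^j$ has leading term a nonzero scalar multiple of $z^{i+j}x^j$.

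Distinct pairs $(i,j)$ give distinct leading monomials $z^{i+j}x^j$. Any vanishing linear combination would therefore have a leading term that cannot cancel, and this gives the required independence.

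An alternative I would keep in reserve sidesteps the independence argument entirely. One can use the filtration by total degree: the associated graded algebra is the skew polynomial ring with parameters $q^{\pm2}$. Alternatively, one can apply the Ore-extension PI criterion \cite[Proposition 2.5]{LeroyMatczuk2007} to $\K[x][z;\sigma,\delta]$, where $\sigma(x)=q^2x$. Either route would yield that $q^2$ has finite order.
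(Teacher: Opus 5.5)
Your proposal is correct and follows the same route as the paper. Sufficiency uses the central subalgebra $\K[x^l,y^l,z^l]$ from Corollary~\ref{cor:2.2} together with \cite[\S 13.1.13, Corollary]{mcconnell}, and necessity uses the quantum plane generated by $e$ and $z$ (Lemma~\ref{lema:ez-q-2ze}) together with De Concini--Procesi; your leading-term argument ($z^ie^j$ is a nonzero scalar times $z^{i+j}x^j$ plus terms of lower total degree) and your explicit reduction of PBW monomials modulo the $l$-th powers supply details the paper only asserts, namely injectivity of $\mathcal O_{q^{-2}}(\K^2)\to\K\langle e,z\rangle$ and finite generation over the central subalgebra.
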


\begin{proof}
Suppose that $q^{2}$ is a primitive 
$l$-th root of unity. Then, by Corollary \ref{cor:2.2}, 
\[
 B=\K[x^{l},y^{l},z^{l}] \subseteq Z(A(\alpha,\beta,\gamma))
\] is a central subalgebra of $A(\alpha,\beta,\gamma)$. This implies that $A(\alpha,\beta,\gamma)$   is finitely generated as an 
B-module. Hence, by \cite[\S 13.1.13, Corollary]{mcconnell}, $A(\alpha,\beta,\gamma)$ satisfies the PI property.

Conversely, suppose that  $A(\alpha,\beta,\gamma)$  satisfies the PI property. Then every $\K$-subalgebra of  $A(\alpha,\beta,\gamma)$ satisfies the PI property. In particular, using the relation established in Lemma \ref{lema:ez-q-2ze}, the subalgebra $\K\langle z,e \mid ez = q^{-2}ze \rangle$  isomorphic to the quantum plane with parameter  $q^{-2}$ and by  \cite[Section \S 7.1]{DeConciniProcesi1993},  one concludes that $q^{2}$ is a root of unity.
\end{proof}


\section{Down-Up algebras}\label{DownUpalgebras}
Down-up algebras have been introduced by Benkart and Roby in \cite{benkart} motivated by the study of posets. Given a field $\K$ and constants $\alpha$, $\beta$, $\gamma$ in $\K$, the \emph{Down-Up algebra} $A=A(\alpha,\beta,\gamma)$ is the associative algebra generated over $\K$ by $u$ and $d$, subject to the defining relations:
\begin{align*}
du^2&=\alpha udu+\beta u^2d+\gamma u\\
d^2u&=\alpha dud+\beta ud^2+\gamma d.
\end{align*}
As known examples of Down-Up algebras, we can mention $A(2,-1,0)$ that turns out to be isomorphic to the enveloping algebra of the Heisenberg Lie algebra of dimension 3; for the case  where $\gamma\neq 0$, the algebra $A(2,-1,\gamma)$ is isomorphic to the enveloping algebra of $\mathrm{sl}_2(\K)$. For another interesting example, consider the quantized enveloping algebra $U_q(\mathrm{sl}_3(\K))$ with generators $E_i$, $F_i$, $K^{\pm 1}$, $i=1$, $2$ and a non-zero scalar $q$ in $\K$; the subalgebra of $U_q^{+}(\mathrm{sl}_3(\K))$ generated by $E_1$, $E_2$ is the Down-Up algebra $A([2]_{q},-1,0)$, where $[n]_{q}=\frac{q^{n}-q^{-n}}{q-q^{-1}}$. For $\gamma\neq 0$, the Down-Up algebra $A(0,1,\gamma)$ is isomorphic to the  enveloping algebra of the Lie superalgebra $\mathrm{osp}(1,2)$.\\
\\
Kirkman, Musson and Passman proved in \cite{Kirkman}  that $A(\alpha,\beta,\gamma)$ is a noetherian algebra if and only if the parameter $\beta$ is non-zero; the latter is equivalent to saying that $A(\alpha,\beta,\gamma)$ is a domain. Furthermore, for Down-Up algebras, the Krull, Gelfand-Kirillov, and global dimensions have already been computed, see \cite{Bavula4}, \cite{benkart} and \cite{Kirkman}. Additionally, their representation theory, Hochschild homology and cohomology, as well as several homological and ring theoretical properties have also been studied  (e.g., \cite{carvalho}, \cite{carvalho2}, \cite{solotar}, \cite{Jordan}, \cite{gallego}).

In what follows it will be assumed that the roots of the polynomial $t^2-\alpha t-\beta$ are in the field $\K$. 
\begin{teorema}\cite[\S 4.4]{Kirkman}
If $A=A(\alpha, \beta, \gamma)$ is a Down-Up algebra over a field $\K$, then the following statements are equivalent
\begin{enumerate}
    \item [\rm (1)] $\beta\neq 0$.
\item [\rm (2)]  $A$ is right (or left) noetherian.
\item [\rm (3)] $A$ is a domain.
\item [\rm (4)] $\K[ud, du]$ is a polynomial ring in two generators. 
\end{enumerate}
Moreover, in these conditions $A$ is an Auslander-regular algebra with global dimension 3.
\end{teorema}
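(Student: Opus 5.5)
Write the roots of $t^2-\alpha t-\beta$ as $r_1,r_2\in\K$, so that $\alpha=r_1+r_2$ and $\beta=-r_1r_2$. The proof will be organized around the realization of $A$ as a generalized Weyl algebra over $R=\K[x,y]$, the polynomial ring in two variables. Define $\sigma(x)=y$ and $\sigma(y)=\alpha y+\beta x+\gamma$, and set $a=x$. Consider the map $d\mapsto X$, $u\mapsto Y$, with $x=ud$ and $y=du$, where the relations are $Xr=\sigma(r)X$, $Yr=\sigma^{-1}(r)Y$ (whenever it makes sense), $YX=x$ and $XY=\sigma(x)=y$. The two cubic defining relations then become exactly $du\cdot u=\alpha\, u\cdot du+\beta\, ud\cdot u+\gamma u$, that is, $yu=u(\alpha y+\beta x+\gamma)$, together with its mirror. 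So the quadratic relations of $A$ encode the linear map $\sigma$ on $\operatorname{span}\{ud,du,1\}$.

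First I show (1)$\Leftrightarrow$(4). Using the Diamond Lemma, with the reductions $du^2\to\cdots$ and $d^2u\to\cdots$ (the only overlap $d^2u^2$ resolves), one obtains a $\K$-basis of $A$ of the form $u^i(du)^j d^k$. This forces $\K[ud,du]$ to be spanned by the monomials $(ud)^a(du)^b$. These are linearly independent exactly when the relation coming from the $d$–$u$ overlap does not produce a dependency. When $\beta=0$, one checks directly that $(ud)(du)=u(d^2u)=u(\alpha dud+\gamma d)$. This expresses $ud\cdot du$ through $(ud)^2$ and $ud$, giving a nontrivial relation, so the polynomial-ring property fails. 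When $\beta\neq0$, $\sigma$ is an automorphism of $\K[x,y]$ (its linear part has determinant $-\beta$). Then $A\cong R(\sigma,a)$ is a generalized Weyl algebra, which is a free left $R$-module on $\{X^n,Y^n\}$, and hence $R=\K[ud,du]$ embeds as a polynomial ring.

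Next, for $\beta\neq0$, I would obtain (2), (3) and the homological claims from the GWA presentation. I would use the standard facts that $R(\sigma,a)$ with $R$ Noetherian and $\sigma$ an automorphism is Noetherian, and that it is a domain when $R$ is a domain and $a\neq0$. Alternatively, and this is the route I would actually take, I would filter $A$ by degree with $\deg u=\deg d=1$. The associated graded algebra is $A(\alpha,\beta,0)$, which is an iterated Ore extension of a quantum plane once one changes variables via $r_1,r_2$: setting $w=du-r_1ud$ gives $wu=r_2uw$ and $dw=r_2wd$. Iterated Ore extensions with automorphisms are Noetherian domains of global dimension 3 and are Auslander regular, and these properties lift from $\operatorname{gr}A$ to $A$.

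For the converse implications, I assume $\beta=0$ and show that $A$ is neither a domain nor Noetherian. For the domain property, the identity $d(ud-r)u$-type computation exhibits zero divisors. Concretely, $\beta=0$ gives $d\,(du\,u-\alpha\,u\,du-\gamma u)=0$, and this combination is nonzero in the Diamond-Lemma basis. For the Noetherian property, I would exhibit the strictly ascending chain of left ideals generated by $d u^{n}d$, or alternatively argue that the ideal $Ad$ fails the ascending chain condition, following \cite{Kirkman}. The main obstacle is this last step: finding the explicit non-Noetherian chain for $\beta=0$ and verifying strictness using the normal-form basis. The left and right versions then follow by the anti-automorphism that exchanges $u$ and $d$.
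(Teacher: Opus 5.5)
The paper quotes this theorem from \cite{Kirkman} without proof, so I am judging your proposal on its own. Your case $\beta\neq0$ is sound in outline: both the GWA presentation and the degree filtration work. I checked that $\operatorname{gr}A\cong A(\alpha,\beta,0)$ is the Ore extension by $d$ of the quantum plane $\K\langle u,w\mid wu=r_2uw\rangle$, with $w=du-r_1ud$. Your computation $ud\cdot du=\alpha(ud)^2+\gamma\,ud$ for $\beta=0$, which refutes (4), is also correct. One smaller point: lifting from $\operatorname{gr}A$ only gives $\operatorname{gldim}A\le 3$. An iterated Ore extension need not have global dimension equal to its number of variables; the first Weyl algebra in characteristic $0$ is a counterexample. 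You therefore still need a lower bound, for instance $\operatorname{Ext}^3_A(\K,A)\neq0$ for the trivial module $\K=A/(Au+Ad)$, which exists for all parameters.

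The genuine gaps are in the case $\beta=0$.

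\emph{The zero-divisor witness is wrong.} The bracket $du\,u-\alpha\,u\,du-\gamma u$ equals $\beta u^2d$ by the first defining relation, so it is $0$ in $A$ and exhibits nothing. The right element is $c:=du-\alpha ud-\gamma$. It is nonzero in the basis $u^i(du)^jd^k$, and it satisfies $dc=\beta ud^2=0$ and $cu=\beta u^2d=0$. Your own step for (4) already gives $ud\cdot c=0$.

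\emph{The non-Noetherian direction is not proved, and the chain you suggest fails.} If $\gamma=0$, then $du^kd=\alpha\,u\,du^{k-1}d$ for $k\ge2$. Hence $\sum_{k\le n}A\,du^kd=Ad^2+Adud$ for every $n\ge1$, and the chain stabilizes. Saying that ``$Ad$ fails the ACC'' is not an argument, since $Ad$ is principal.

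A chain that works uses $c$ together with the $\Z$-grading $\deg u=1$, $\deg d=-1$; both defining relations are homogeneous for it.
\begin{itemize}
\item Since $dc=0$, and $c\in\K[ud,du]$ commutes with $du$ (indeed $ud\cdot du=du\cdot ud$ for all parameters), you get $Ac=\operatorname{span}\{u^ic(du)^j\}$. This lies in degrees $\ge0$.
\item Therefore $\sum_{k=0}^{n}Acd^k$ is spanned by homogeneous elements of degree $\ge -n$.
\item On the other hand, $cd^{n+1}=(du)d^{n+1}-\alpha ud^{n+2}-\gamma d^{n+1}$ is homogeneous of degree $-(n+1)$. It is nonzero because it is a combination of three distinct basis words.
\end{itemize}
So $\sum_{k\le n}Acd^k$ is strictly ascending and $A$ is not left Noetherian. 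The anti-automorphism exchanging $u$ and $d$ fixes $c$, so it turns this into the strictly ascending chain of right ideals $\sum_{k\le n}u^kcA$, and $A$ is not right Noetherian either.
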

Given a noetherian Down-Up algebra $A(\alpha,\beta,\gamma)$,  let $\lambda$ and $\mu$ be the roots of $t^2-\alpha t-\beta$, so that $\alpha=\lambda+\mu$ and $\beta=-\lambda\mu$. Since $\beta$ is non-zero, it follows that $\lambda$ and $\mu$ are both non-zero. Furthermore, if $\gamma\neq 0$, there is an isomorphism $A(\alpha,\beta,\gamma)\cong A(\alpha,\beta,1)$, see \cite[Lemma 4.1 (ii)]{carvalho}, so we can assume $\gamma=1$ without loss of generality. Under these conditions, the multiplication rules in $A$ are given by:
\begin{align*}
[d,[d,u]_{\lambda}]_{\mu}=d, \quad\quad  \text{\, and\, } \quad\quad [[d,u]_{\lambda},u]_{\mu}=u,
\end{align*}
where $[a,b]_{\eta}$ denotes the expression $ab-\eta ba$.\\
\\
Furthermore, recall that given a $\K$-algebra $R$, an automorphism $\sigma$ of $R$ and a central element $a$ of $R$, the \textit{\-ge\-ne\-ra\-li\-zed Weyl algebra} $R(\sigma,a)$ is defined as the algebra generated by $X$ and $Y$ subject to the relations: $YX=a$, $XY=\sigma(a)$, $Yb=\sigma^{-1}(b)Y$ and $Xb=\sigma(b)X$ for all $b\in R$.\\
\\
In \cite[\S 2.2]{Kirkman} it is shown that an arbitrary noetherian Down-Up algebra is isomorphic to a \-ge\-ne\-ra\-li\-zed Weyl algebra. Specifically,  taking $R=\K[x,y]$ and $\phi$ the automorphism of $R$ defined by 
\begin{align}\label{autodownup}
 \phi(x)=y, \text{\qquad and \qquad}  \phi(y)=\alpha y+\beta x+\gamma 
\end{align}
and $a=x$, the algebra $A(\alpha,\beta,\gamma)$ turns out to be  isomorphic to $R(\phi,x)$ under the algebra isomorphism $\varphi$ sending $X$ to $d$ and $Y$ to $u$; in particular, $x$ and $y$ correspond to $ud$ and $du$, respectively. Using this fact, Kulkarni in \cite{Kulkarni} computed the center of the Down-Up algebra $A(\alpha,\beta,\gamma)$ using the characterization for the center of an arbitrary generalized Weyl algebra $R(\sigma,a)$ introduced by him. Explicitly, he obtained the following facts:
\begin{teorema}\cite[Proposition 2.0.1, Corollary 2.0.2]{Kulkarni} \label{centergwa} Let $R$ be a commutative domain. The center of the \-ge\-ne\-ra\-li\-zed Weyl algebra $R(\sigma,a)$ is non-trivial  if, and only if, one of the following statements holds:
\begin{enumerate}
    \item [\rm (i)] There exists a non-trivial element $r\in R$ such that $\sigma(r)=r$.
    \item [\rm (ii)] $\sigma^m=\mathrm{id}_{R}$ for some positive integer $m$.
\end{enumerate}
In these conditions, if $m\in \Z^+$ satisfies $\sigma^m=\mathrm{id}_{R}$ (if there is no such integer $m$, take $m=0$) the center of $R(\sigma, a)$ is generated by $\{X^m, Y^m\}$   and all the elements of $R$ which are fixed by $\sigma$; that is, $Z(R(\sigma,a))=R^{\sigma}[X^m,Y^m]$, where $R^{\sigma}:=\{r\in R\mid \sigma(r)=r\}$.
\end{teorema}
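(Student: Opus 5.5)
The proof rests on the $\mathbb{Z}$-grading of $W:=R(\sigma,a)$. Assign $\deg X=1$, $\deg Y=-1$ and $\deg R=0$. Every element of $W$ is then uniquely a finite sum $\sum_{n>0} r_nX^n+r_0+\sum_{n>0} r_{-n}Y^n$ with $r_k\in R$; this is the standard $R$-basis $\{X^n,Y^n\}$ of a generalized Weyl algebra, which I take as known.

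\textbf{Step 1: the center is graded.} Conjugating by elements of $R$ and commuting with $X,Y$ preserves the grading. So an element $z$ is central if and only if each homogeneous component $r_nX^n$ (respectively $r_0$, $r_{-n}Y^n$) is central. It therefore suffices to determine the central homogeneous elements.

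\textbf{Step 2: degree zero.} For $r\in R$, commutation with $R$ is automatic because $R$ is commutative. The condition $Xr=rX$ reads $\sigma(r)X=rX$, and $Yr=rY$ reads $\sigma^{-1}(r)Y=rY$. Since $R$ is a domain and $X,Y$ are free over $R$, both conditions reduce to $\sigma(r)=r$. Hence the degree-zero part of the center is exactly $R^\sigma$.

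\textbf{Step 3: degree $n>0$.} Take $rX^n$ with $r\neq 0$. Commuting with $b\in R$ gives $rX^nb=r\sigma^n(b)X^n$, so we need $r(\sigma^n(b)-b)=0$ for all $b$. Because $R$ is a domain, this forces $\sigma^n=\mathrm{id}_R$. Commuting with $X$ gives $\sigma(r)=r$. Commuting with $Y$ uses $X^nY=\sigma^n(a)X^{n-1}$ and $YX^n=aX^{n-1}$, which requires $r(\sigma^n(a)-a)=0$; this already holds once $\sigma^n=\mathrm{id}$.

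\textbf{Step 4: describing the positive-degree part.} By Step 3 the central elements of degree $n>0$ are $R^\sigma X^n$ exactly when $\sigma^n=\mathrm{id}$, and are $0$ otherwise. The set of such $n$ is $m\mathbb{Z}^+$, where $m$ is the order of $\sigma$. The symmetric computation gives $R^\sigma Y^n$ for $m\mid n$. Since $X^{km}=(X^m)^k$, the center is $R^\sigma[X^m,Y^m]$. When no such $m$ exists only degree zero survives, which matches the convention $m=0$.

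\textbf{Step 5: the nontriviality criterion.} Read "trivial" as the center being the base field, i.e. the scalars. The center is nontrivial if and only if either $R^\sigma$ contains a nonscalar element (case (i)) or $X^m$ is a nonscalar central element for some $m\geq 1$ (case (ii)).

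The only delicate point is the cancellation in Step 3. It needs $R$ to be a domain and $X^n$ to be torsion-free over $R$. Both hold from the free $R$-basis and the hypothesis, provided $a\neq 0$; if $a=0$ the basis statement still holds, so the argument is unaffected.
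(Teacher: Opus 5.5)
Your argument is correct and complete. The paper gives no proof of this statement; it only quotes Kulkarni, so there is no route of its own to compare against.

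Your approach is the natural direct proof. You reduce to homogeneous central elements via the $\mathbb{Z}$-grading $A_0=R$, $A_n=RX^n$, $A_{-n}=RY^n$, which is the same grading the paper uses later in Lemma~\ref{fingencenalg}. You then read off the conditions from $rX^nb=r\sigma^n(b)X^n$, $X(rX^n)=\sigma(r)X^{n+1}$, $YX^n=aX^{n-1}$ and $X^nY=\sigma^n(a)X^{n-1}$.

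Your proof also makes explicit two points the quoted statement leaves vague:
\begin{itemize}
\item The integer $m$ must be the \emph{order} of $\sigma$. For a proper multiple $m'$ of the order, $R^{\sigma}[X^{m'},Y^{m'}]$ only has components in degrees divisible by $m'$, so it misses the central element $X^{m}$.
\item $R^{\sigma}[X^m,Y^m]$ means the subalgebra generated by $R^{\sigma}$, $X^m$ and $Y^m$, not a polynomial ring. Indeed $X^mY^m$ and $Y^mX^m$ are central of degree $0$, hence lie in $R^{\sigma}$.
\end{itemize}

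You also locate correctly where the domain hypothesis is needed. It enters only in Step~3, to pass from $r(\sigma^n(b)-b)=0$ to $\sigma^n=\mathrm{id}_R$; Step~2 needs only freeness.

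Your reading of ``trivial'' as ``equal to $\K$'' is the one under which the equivalence holds. It implicitly uses that $\sigma$ is $\K$-linear, so that $\K\subseteq R^{\sigma}$. The case $R=\K$, $\sigma=\mathrm{id}$ shows that (ii) is genuinely needed alongside (i): there (ii) holds, (i) fails, and the center is the whole commutative algebra.
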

Therefore, if $\sigma$ has finite order, then $R(\sigma,a)$ is generated by $1, X,\ldots, X^{m-1}$, $Y,\ldots, Y^{m-1}$ as module over its center. On the contrary, if $\sigma$ has infinite order, then  $Z(R(\sigma, a))=R^{\sigma}$. In these conditions,  the algebra $R(\sigma,a)$ cannot be finitely generated on $Z(R(\sigma,a))$ as is proved below.
\begin{lema}\label{fingencenalg}
Let $R(\sigma,a)$ be a generalized Weyl algebra. Then
$R(\sigma,a)$ is finitely generated as a module over a central
subalgebra if and only if the automorphism $\sigma$ has finite order.
\begin{proof}
For $A:=R(\sigma,a)$, consider the canonical $\Z$-grading of $R(\sigma,a)$ determined by
\begin{align*}
\deg(R)=0,\qquad \deg(X)=1 \text{\qquad and \qquad} \deg(Y)=-1.
\end{align*}
Under these conditions, it follows that $A=\bigoplus_{n\in\Z}A_n$, where
\begin{align*}
 A_0=R,\qquad A_n=RX^{n}, \text{\qquad and \qquad} A_{-n}=RY^n,  
\end{align*}
for each $n\in \Z^+$. If $\sigma$ has not finite order, then $Z(A)=R^{\sigma}\subseteq R=A_0$, so if $C$ is a central subalgebra of $A$, then $C\subseteq Z(A)$; that is, every element of $C$ has degree $0$. Therefore, assuming that $A$ is finitely generated as --say-- left module over  one of such $C$, one would have 
\begin{align*}
   A=\sum_{i=1}^t C a_i, 
\end{align*}
for some $a_1,\ldots, a_t\in A$. Each of these $a_i$ is written as
\begin{align*}
  a_i=  \sum_{k=l_i}^{m_i} a_{k}^{(i)}
\end{align*}
for certain integers $l_i\leq m_i$, 
with $a_{k}^{(i)}$ a homogeneous element of degree $k$. Let $l:=\mathrm{min}\{l_i\}_{i=1}^{t}$ and $m:=\mathrm{max}\{m_i\}_{i=1}^t$. Since $A_0 A_k\subseteq A_k$, it follows that every element of the sum $\sum _{i=1}^{t}Ca_i$ lives only in degrees between $l$ and $m$; that is, 
$\sum_{i=1}^t Ca_i\subseteq \bigoplus _{k=l}^{m}A_k$. Thus, if $n\geq m+1$ (or $n\leq l-1$) and $0\neq a\in A_n$, then $a\notin \sum_{i=1}^t Ca_i$, which contradicts the finite generation hypothesis on $A$.
\end{proof}
\end{lema}
Indeed, given an arbitrary Down-Up algebra $A=A(\alpha,\beta,\gamma)$, and by means the algebra isomorphism $A\cong \K[x,y](\phi,x)$ established before, Kulkarni described completely the center of $A$. When such an automorphism $\phi$ has finite order, Kullkanri says that the noetherian Down-Up algebra $A(\alpha,\beta,\gamma)$ is a \textbf{Down-Up algebra at roots of unity}. Recall that throughout this section we assume that $\operatorname{char}(\K)=0$. This assumption will be used in the arguments involving finite-order
automorphisms and Jordan blocks. In particular, it ensures that coefficients such as $n\mu^{n-1}\omega_1$ do not vanish merely because the characteristic of the base field divides $n$.
\begin{teorema}\cite[Theorem 4.0.2]{Kulkarni}\label{centerdownup}
Let $\lambda$ and $\mu$ be the roots of the quadratic polynomial $t^2-\alpha t-\beta$. The center of the Down-Up algebra $A(\alpha,\beta,\gamma)$ is non-trivial if, and only if, one of the following conditions is satisfied:
\begin{enumerate}
\item [\rm (1)] $\lambda^i\mu^j=1$ for some $i$, $j\in \Z^+$.
\item [\rm (2)] $\mu$ is a primitive $m$-th root of unity, with $\mu\neq 1$.
\item [\rm (3)] $\lambda=1$ and $\gamma=0$.
\item[\rm (4)] $\mu=\lambda=1$.
\end{enumerate}
\begin{proof}
 The goal is to establish the conditions for which the automorphism $\phi$  has a non-trivial fixed set in $R=\K[x,y]$  or $\phi^m=\mathrm{id}_{R}$. To begin with, suppose $\alpha^2+4\beta\neq 0$ and $\alpha+\beta\neq 1$. These conditions imply that $\mu\neq \lambda$ and neither of them is equal to $1$.  Case 1 in \cite[pg. 3]{carvalho} ensures that 
\begin{align}\label{omegas}
  \omega_i:=\beta(r_i-1)x+r_i(r_i-1)y+\gamma r_i,   
 \end{align}
 for $i\in\{1,2\}$, with $r_1=\mu$ and $r_2=\lambda$,  are such that $\phi(\omega_i)=r_i\omega_i$. Since the matrix $\begin{pmatrix} \beta(\lambda-1) & \lambda(\lambda-1)\\ \beta(\mu-1) & \mu(\mu-1) \end{pmatrix}$ is invertible, it is possible to define a change of coordinates over $R$ by means $x\mapsto \omega_1$ and $y\mapsto \omega_2$, so that $\K[x,y]=\K[\omega_1,\omega_2]$. In particular, 
 \begin{align*}
\phi(\omega_1^i\omega_2^j)=\mu^i\lambda^j \omega_1^i\omega_2^j.
 \end{align*}
Since $\phi$ preserves grading with respect these new coordinates, it will have a fixed element if, and only if, $\mu^i\lambda^j=1$ for some $i$, $j\in \Z^+$. Moreover, there is a $m\in\Z^+$ such that $\phi^m=\mathrm{id}_{R}$ if, and only if, $\mu^m=\lambda^m=1$. Now consider the case $\alpha^2+4\beta\neq 0$ and $\alpha+\beta=1$. In these conditions $\mu\neq \lambda$, $\lambda=1$ and $\alpha\neq 2$. Taking
\begin{align*}
  \omega_1&=\beta x +y\\
  \omega_2&=-x+y+\frac{\gamma}{\alpha-2},
\end{align*}
it is valid that $\phi(\omega_1)=\omega_1+\gamma$ and $\phi(\omega_2)=\mu\omega_2$. If $\gamma\neq 0$ then the automorphism $\phi$ cannot be or finite order. Thus, the center is non-trivial if, and only if, $\mu^{i}=1$ for some $i\in \Z^+$.  Certainly, if $\gamma=0$, the element $\omega_1$ would be a non-trivial element in the center of $A(\alpha,\beta,0)$. Now, if $\alpha^2+4\beta=0$ and $\alpha+\beta=1$, then roots $\mu$ and $\lambda$ are both equal to $1$. So, $t^2-\alpha t-\beta=(t-\mu)^2$, $\alpha=2$, $\beta=-1$ and the linear part or $\phi$ has a single eigenvalue $\mu$. For this case, the linear polynomials
\begin{align*}
 \omega_1&=-x+y+\gamma,\\
 \omega_2&=y
\end{align*}
satisfy $\phi(\omega_1)=\omega_1+\gamma$ and $\phi(\omega_2)=\omega_1+\omega_2$. The latter implies that $\phi$ never has finite order. So, the only possible way to have a non-trivial center is if exists  some non-scalar element of $R$ fixed by $\phi$. Note that if $\gamma=0$, such an element is $\omega_1=-x+y$. On the contrary, for $\gamma\neq 0$, it is well known that $A(2,-1,\gamma)\cong \mathcal{U}(\mathrm{sl}_2(\K))$, the universal enveloping algebra of $\mathrm{sl}_2(\K)$, and the center of the last one is non-trivial (see, e.g., \cite[Example 13.10]{Etingof2024}). Finally, for $\alpha^2+4\beta=0$ with $\alpha+\beta\neq 1$, it follows that $\mu\neq 1$. Given these conditions, the linear polynomials
\begin{align*}
 \omega_1&=(2\beta +\alpha)x+(\alpha-2)y+2\gamma,\\
 \omega_2&=2(-x+y)
\end{align*}
have the particularity that $\phi(\omega_1)=\left(\frac{\alpha}{2}\right)\omega_1$ and $\phi(\omega_2)=\omega_1+\left(\frac{\alpha}{2}\right)\omega_2$. Note that $\frac{\alpha}{2}=\mu$. In these new coordinates, the action of $\phi$ is homogeneous and, since the only eigenvalue of  $\phi$ on $R_n:=\langle\{\omega_1^i\omega_2^j\mid i+j=n\}\rangle_{\K}$ is $\mu^n$, then $\phi$ can fix some element of $R$ if, and only if, $\mu^n=1$ for some $n\in \Z^+$. Note that under these circumstances over $\alpha$ and $\beta$,  the linear part of $\phi$ in degree $1$ --after change of coordinates-- is represented by $L=\begin{pmatrix}
 \lambda& 1\\ 0 &\lambda    \end{pmatrix}$ and, therefore, there is no $m\in \Z^+$ with $L^m=I$; that is, $\phi$ has not finite order. 
\end{proof}
\end{teorema}
The latter theorem allows the following explicit description of $Z(A(\alpha,\beta, \gamma))$.
\begin{teorema}\cite[Theorem 4.0.3, Theorem 4.0.4]{Kulkarni}
On the same conditions as before,  suppose that $\alpha^2+4\beta\neq 0$ and $\lambda^i\mu^j=1$, for some $i$, $j\in\Z^+$. If $\lambda$ and $\mu$ are roots of unity, let $m:=\mathrm{lcm}\{i,j\}$. Otherwise let $m=0$. Then, the center of  $A(\alpha,\beta,\gamma)$ is the subalgebra generated by $\{U^m,D^m\}$ and  $\{\omega_1^i\omega_2^j\mid \lambda^i\mu^j=1\}$, with $\omega_1$ and $\omega_2$ as in (\ref{omegas}). Furthermore, if one of the following holds
\begin{enumerate}[\rm (1)]
\item $\alpha^2+4\beta=0$ and $\mu$ is a non-trivial $m$-th root of unity.
\item  $\alpha^2+4\beta\neq 0$, $\lambda=1$, $\mu$ is a primitive $m$-th root of unity and $\gamma\neq 0$.
\end{enumerate}
Then the center of $A(\alpha,\beta,\gamma)$ is the polynomial ring $\K[\omega^m]$ where $\omega$ is given by
\begin{align*}
     \omega&=(2\beta +\alpha)x+(\alpha-2)y+2\gamma
     \end{align*}
in the first case, and 
\begin{align*}
 \omega&=2(-x+y)
\end{align*}
in the second case.
\end{teorema}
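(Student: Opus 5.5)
The plan is to apply Theorem \ref{centergwa} to the presentation $A(\alpha,\beta,\gamma)\cong R(\phi,x)$ with $R=\K[x,y]$. By that result, $Z(A)=R^{\phi}[X^m,Y^m]$, where $m$ is the order of $\phi$ (with $m=0$ if $\phi$ has infinite order) and $X,Y$ correspond to $d,u$ (written $D,U$ in the statement). So the proof has two parts: compute the fixed ring $R^{\phi}$, and decide when $\phi$ has finite order. Both depend only on the linear change of coordinates already set up in the proof of Theorem \ref{centerdownup}.

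\emph{First case}, $\alpha^2+4\beta\neq 0$ and $\lambda^i\mu^j=1$. Assume first that $\lambda,\mu\neq 1$, so that $\lambda\neq\mu$. Use the eigen-coordinates $\omega_1,\omega_2$ of (\ref{omegas}), for which $\K[x,y]=\K[\omega_1,\omega_2]$ and $\phi(\omega_1^a\omega_2^b)=\mu^a\lambda^b\omega_1^a\omega_2^b$. Since $\phi$ is diagonal on this monomial basis, a polynomial is fixed exactly when each of its monomials is fixed. Hence $R^{\phi}$ is spanned by the monomials $\omega_1^a\omega_2^b$ with $\mu^a\lambda^b=1$; I will match this to the statement's indexing convention for $i,j$. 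Also $\phi^m=\mathrm{id}$ if and only if $\lambda^m=\mu^m=1$. When $\lambda,\mu$ are roots of unity I take $m$ to be the order of $\phi$, namely the lcm of the orders of $\lambda$ and $\mu$, and read the statement's $m$ in that sense. Theorem \ref{centergwa} then gives generators $U^m,D^m$ together with the fixed monomials. If $\lambda,\mu$ are not both roots of unity, $\phi$ has infinite order and only the fixed monomials remain.

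\emph{Second case}, items (1) and (2). I reuse the coordinates built in the proof of Theorem \ref{centerdownup}. In case (1) the linear part of $\phi$ in the coordinates $(\omega_1,\omega_2)$ is a Jordan block with eigenvalue $\mu$. I restrict $\phi$ to the $\phi$-stable filtration pieces (polynomials of degree $\le n$) and show by induction on the $\omega_2$-degree that a fixed element cannot involve $\omega_2$. The reason is that $\phi(\omega_2^b)=(\omega_1+\mu\omega_2)^b$ produces the term $b\mu^{b-1}\omega_1\omega_2^{b-1}$, which is nonzero because $\operatorname{char}\K=0$, and it cannot be cancelled by the other terms. Comparing leading terms in this way forces fixed elements into $\K[\omega_1]$, and there $\phi(\omega_1^a)=\mu^a\omega_1^a$, so $R^{\phi}=\K[\omega_1^m]$. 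In case (2) we have $\phi(\omega_1)=\omega_1+\gamma$ with $\gamma\neq0$ and $\phi(\omega_2)=\mu\omega_2$. A fixed $f=\sum_b f_b(\omega_1)\omega_2^b$ must satisfy $f_b(\omega_1+\gamma)\mu^b=f_b(\omega_1)$. Comparing leading coefficients of $f_b$ gives $\mu^b=1$, and then $f_b(t+\gamma)=f_b(t)$ forces $f_b$ to be constant, again since the characteristic is $0$. Hence $R^{\phi}=\K[\omega_2^m]$. In both cases $\phi$ has infinite order, as shown in the proof of Theorem \ref{centerdownup}, so no powers of $U,D$ appear and $Z(A)$ is the polynomial ring $\K[\omega^m]$.

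The main obstacle is the Jordan-block computation in case (1), namely ruling out fixed polynomials that involve $\omega_2$. I would handle it with the degree and leading-term induction described above. There is a second hazard: the statement's formulas for $\omega$ in cases (1) and (2) appear swapped relative to the coordinates given earlier, and $\omega_1,\omega_2$ are used with two different meanings. I would reconcile the labels explicitly before writing the final identification.
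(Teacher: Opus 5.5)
Your route is the one the paper intends. The paper gives no separate proof of this statement; it only says the description follows from Theorem \ref{centergwa} applied to $A\cong\K[x,y](\phi,x)$, together with the coordinate changes in the proof of Theorem \ref{centerdownup}. That is exactly what you do, and your computations of $R^{\phi}$ and of the order of $\phi$ are correct. Your reinterpretations are also right: $m$ must be the order of $\phi$, not $\mathrm{lcm}\{i,j\}$, and the fixed monomials are $\omega_1^a\omega_2^b$ with $\mu^a\lambda^b=1$. Excluding $\lambda=1$ from the first part is also correct, and the leftover subcase $\lambda=1$, $\gamma=0$ lies outside the statement, as the Note after it observes.

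In case (1), phrase the key step through $\phi(f)-f$ rather than saying the term ``cannot be cancelled.'' Inside $\phi(f)$ alone it can be cancelled by the next coefficient. Work on the component of $\omega$-degree $n$, with $b$ the top $\omega_2$-degree and $c_b$ its coefficient. First $\mu^n=1$ is forced, and then the coefficient of $\omega_1^{n-b+1}\omega_2^{b-1}$ in $\phi(f)-f$ is $b\mu^{n-1}c_b$.

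Your diagnosis of the mismatch in the final identification is wrong, however: nothing is swapped. In case (1), the stated $\omega=(2\beta+\alpha)x+(\alpha-2)y+2\gamma$ is exactly the $\mu$-eigenvector of the Jordan case, and your argument gives $Z(A)=\K[\omega^m]$ as stated. In case (2), your answer $\K[\omega_2^m]$ with $\omega_2=-x+y+\frac{\gamma}{\alpha-2}$ is the correct one. The stated $\omega=2(-x+y)$ is the non-eigen Jordan coordinate from the case $\alpha^2+4\beta=0$, and it is simply incorrect when $\gamma\neq0$. Indeed, since $\beta=-\mu$ and $\alpha-1=\mu$, one has $\phi(-x+y)=\mu(-x+y)+\gamma$. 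Hence $\phi\bigl((-x+y)^m\bigr)$ takes the value $\gamma^m\neq0$ at $x=y=0$, while $(-x+y)^m$ vanishes there.

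Concretely, take $A(0,1,\gamma)$, where $\lambda=1$, $\mu=-1$ and $m=2$. Its center is $\K[(du-ud-\gamma/2)^2]$, and $(du-ud)^2$ is not central. So relabeling will not close this step. What your proof establishes is the corrected statement, with $\omega=-x+y+\frac{\gamma}{\alpha-2}$ (up to a scalar) in case (2).
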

\begin{remark} It seems  that the case when $\alpha^2+4\beta\neq 0$, $\alpha+\beta=1$ and $\gamma=0$ is not covered by the last theorem. In this case $\mu\neq \lambda$ and $\lambda =1$ and,  as was shown in the proof of the Theorem \ref{centerdownup}, $\omega_1=\beta x+y=\beta ud+du$ is fixed by $\phi$. Thus $\K[\omega_1]\subseteq R^{\phi}$. If $\mu$ is not a root of unity, then the equality $\K[\omega_1]=R^{\phi}=Z(A)$ holds. On the other side, if $\mu$ is a $m$-th root of unity, then $\phi$ has finite order, so $R^{\phi}=\K[\omega_1, \omega_2^m]$ and Theorem \ref{centergwa} ensures that $Z(A)=\K[\omega_1, \omega_2^m,u^m,d^m]$. Thus, this completes the description of the center in this particular
case. However, in view of Theorem \ref{teorema8.7}, the PI property is still characterized by
the finite order of $\phi$.
\end{remark}
Regarding the PI property on these algebras, in \cite[\S 2.4 Theorem]{carvalho3} it is proved the following characterization. Before, remember that a prime ring $A$ is called \textbf{right bounded} if every essential right ideal contains a nonzero ideal. Moreover, $A$ is said to be \textbf{right completely bounded} if $A/P$ is right bounded for every prime ideal $P$. Finally, $A$ is called a \textbf{right FBN ring} if it is right completely bounded and right Noetherian. Analogously, one defines a \textbf{left bounded ring}, \textbf{left completely bounded} and \textbf{left FBN}. 
\begin{teorema} \label{teorema8.7}
 The following statements are equivalent for a Noetherian Down-Up algebra $A = A(\alpha, \beta, \gamma)$:   
 \begin{enumerate}
     \item [\rm (1)] $A$ is a Down-Up algebra at roots of unity (i.e., $\phi$ has finite order);
     \item [\rm (2)] $A$ is a finitely generated module  over a central subalgebra;
     \item [\rm (3)] $A$ is a PI algebra;
     \item [\rm (4)] $A$ is a FBN algebra;
     \item [\rm (5)] The roots of the polynomial $t^2-\alpha t-\beta$ are distinct roots of unity. If $\gamma\neq 0$, these roots are also different from $1$.
 \end{enumerate}
\begin{proof}
The equivalence $(1)\Leftrightarrow (2)$ is ensured by Lemma \ref{fingencenalg}. The implication $(2)\Rightarrow (3)$ is a consequence of the fact that if $S$ is a PI  ring and $T$ an extension of $S$ with $T_S$ finitely generated, then $T$ is a PI ring (see e.g. \cite[\S 13.4.9]{mcconnell}). On the other hand,  since any noetherian prime algebra satisfying a polynomial identity is fully bounded noetherian (see, for instance, \cite[\S 13.6.6]{mcconnell}), then $(3)\Rightarrow (4)$ holds.\\
Now, suppose that $A$ is a FBN algebra. Under these conditions it will proved that the automorphism $\phi$ in (\ref{autodownup}) has finite order. To begin with, it is a well-known fact that any noetherian Down-Up algebra $A(\alpha,\beta,\gamma)$ can be  embedded in to a skew Laurent polynomial ring, see \cite[\S 2.1]{Kirkman}. Specifically, for $R=\K[x,y]$ and $\sigma$ the automorphism of $R$ defined  by $\sigma(x)=y$ and $\sigma(y)=\alpha y+\beta x+\gamma$ (compare with (\ref{autodownup})), if $S:=R[z,z^{-1};\sigma]$ --the skew group ring of the infinite cyclic group $\langle z\rangle$ over $R$, with $rz=z\sigma(r)$, for all $r\in R$-- then the elements $D=z^{-1}$ and $U=xz$ in $S$ are such that $UD=x$, $DU=y$ and  
\begin{align*}
D^2U&=D(DU)=z^{-1}y=\sigma(y)z^{-1}\\
&=(\alpha y+\beta x+\gamma)=\alpha DUD+\beta UD^2+\gamma D;\\
DU^2&=(DU)U=yxz=xz\sigma(y)\\
&=U(\alpha y+\beta x+\gamma)=\alpha UDU+\beta U^2D+\gamma U.
\end{align*}
As a result, there exists an algebra morphism $f:A(\alpha,\beta,\gamma)\to S$ given by $f(d)=D$ and $f(u)=U$. It can be proved that this $f$ is a monomorphism. Note that $S$, as right $R$-module is free and one of its bases is $\{z^n\mid n\in \Z\}$. In \cite[Lemma 1.2]{carvalho} it was proved that $\{d^n\mid n\in \N\}$ is a Ore set of $A$. Even more, the ring $S$ turns out be isomorphic to the localization $A_d$ of $A$ by this Ore set. However, localizations of a FBN ring are also FBN rings (see, for instance, \cite[Lemma 2.1]{Bell}). Therefore $A_d$ and $S$ are fully bounded noetherian. In \cite[Proposition 4.1.12]{carvalhotesis} is proved that, in this framework, the automorphism $\sigma$ has finite order. 
But $\sigma=\phi$, where $\phi$ is the automorphism in (\ref{autodownup}) that allows $A$ to be represented as a generalized weyl algebra. This proves the implication $(4)\Rightarrow (1)$.\\
Finally, for proving $(5)\Leftrightarrow (1)$, let $\mu$, $\lambda$ the roots of $t^2-\alpha t-\beta$ and $\sigma$ the automorphism over $R=\K[x,y]$ considered above in the definition of $S=R[z,z^{-1}; \sigma]$. Let $V$ be the vector $\K$-space spanned by $1$, $x$ and $y$. Then $V$ is stabilized by $\sigma$. Using once again those vectors $\omega_1$ and $\omega_2$ found in \cite[\S 1.4]{carvalho} --which together $1$ form a Jordan basis of $\sigma$ on $V$-- the following cases must be considered: if $\lambda\neq \mu$ and $\lambda$, $\mu\neq 1$, then there exists such a basis such that $\sigma(\omega_1)=\mu\omega_1$ and $\sigma(\omega_2)=\lambda\omega_2$. In this case was showed that $\sigma$ has finite order if, and only if, both $\mu$ and $\lambda$ are roots of unity.\\
If $\lambda=1$ and $\mu\neq 1$, then there exists a basis such that $\sigma(\omega_1)=\omega_1+\gamma$ and $\sigma(\omega_2)=mu\omega_2$. Hence $\sigma$ has finite order if, and only if, $\gamma=0$ and $\mu$ is a root of unity.\\
If $\lambda=\mu$ but different from $1$, then there exists a basis satisfying  $\sigma(\omega_1)=\mu\omega_1$ and $\sigma(\omega_2)=\mu\omega_2+\omega_1$. In these conditions,  for any $n\in \Z^+$ it follows that $\sigma^n(\omega_1)=\mu^n\omega_1$ and $\sigma^n(\omega_2)=\mu^n\omega_2+n\mu^{n-1}\omega_1$ and $\sigma$ cannot have finite order. Analogously, if $\lambda=\mu=1$, it can be showed that the exists a basis such that $\sigma(\omega_1)=\omega_1+\gamma$ and $\sigma(\omega_2)=\omega_1+\omega_2$, implying that $\sigma$ will not have finite order.
\end{proof}
\end{teorema}

\section{Algebra \texorpdfstring{$B_q(f)$}{Bq(f)}}\label{$B_q(f)$}
With the aim of exhibiting a family of algebras whose Ozone group is trivial, Gaddis and Yee recently defined the algebra
$B_q(f)$ in \cite{GaddisYee2025}. To this end, the authors computed the center of these algebras and showed that, when $q$ is a root of unity satisfying certain conditions relative to the polynomial $f$,  certain elements turn out to be central. Recall that, throughout this section, unless otherwise explicitly stated, the base field $\K$ is assumed to have characteristic zero. The positive characteristic case is considered only in the specific results where this hypothesis is explicitly indicated. For $q\in \K^{*}$ and $f\in \K[t]$, the algebra $B_q(f)$ is the free algebra generated by $u$, $v$, $w$ over $\K$, subject to the relations
\[
uv=qvu,\qquad
wu=quw+f(v),\qquad
wv=q^{-1}vw+f(u).
\]

\begin{lema}{\cite[Lemma 2.3]{GaddisYee2025}}\label{B_q(f)ore}
Let $f\in \K[t]$ be a polynomial. Then $B_q(f)$ is the Ore extension
\begin{align*}
B_q(f)=\K_q[u,v][w;\sigma,\delta],
\end{align*}
where $\K_q[u,v]=\K\langle u,v\mid uv-qvu\rangle$ and
$ \sigma(u)=qu$, $\sigma(v)=q^{-1}v$, $\delta(u)=f(v)$, $\delta(v)=f(u)$.
\end{lema}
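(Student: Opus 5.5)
To prove Lemma \ref{B_q(f)ore}, I will check that $\sigma$ and $\delta$ are well defined on $\K_q[u,v]$ and then compare presentations. For $\sigma$: since $\K_q[u,v]=\K\langle u,v\rangle/\langle uv-qvu\rangle$, it suffices to see that the assignment $u\mapsto qu$, $v\mapsto q^{-1}v$ on the free algebra kills the relation. Indeed $\sigma(u)\sigma(v)-q\sigma(v)\sigma(u)=uv-qvu$, which is $0$. The map $u\mapsto q^{-1}u$, $v\mapsto qv$ gives an inverse, so $\sigma$ is an automorphism.

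For $\delta$, I will use the standard criterion for skew derivations on quotients of free algebras. A $\sigma$-derivation $\tilde\delta$ of $\K\langle u,v\rangle$ is uniquely determined by its values on generators, taking $\tilde\delta(u)=f(v)$ and $\tilde\delta(v)=f(u)$ with $\sigma$ lifted to the free algebra. It descends to $\K_q[u,v]$ exactly when $\tilde\delta(uv-qvu)$ lies in the ideal $\langle uv-qvu\rangle$. I compute
\[
\tilde\delta(uv-qvu)=\sigma(u)\tilde\delta(v)+\tilde\delta(u)v-q\bigl(\sigma(v)\tilde\delta(u)+\tilde\delta(v)u\bigr)=quf(u)+f(v)v-vf(v)-qf(u)u .
\]
This vanishes identically, because $u$ commutes with $f(u)$ and $v$ commutes with $f(v)$. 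So $\delta$ is a well-defined $\sigma$-derivation.

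Next I compare presentations. The Ore extension $\K_q[u,v][w;\sigma,\delta]$ has the relations $wa=\sigma(a)w+\delta(a)$. For $a=u$ this gives $wu=quw+f(v)$, and for $a=v$ it gives $wv=q^{-1}vw+f(u)$; together with $uv=qvu$ these are exactly the defining relations of $B_q(f)$.

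This yields a surjective algebra map $B_q(f)\to \K_q[u,v][w;\sigma,\delta]$. In the other direction, the universal property of Ore extensions gives a map $\K_q[u,v][w;\sigma,\delta]\to B_q(f)$. It is induced by the obvious map $\K_q[u,v]\to B_q(f)$ together with $w\mapsto w$, and it exists because $w r=\sigma(r)w+\delta(r)$ holds in $B_q(f)$ for every $r$. That identity holds for all $r$ once it holds on the generators $u,v$, since the set of $r$ satisfying it is a subalgebra by the $\sigma$-derivation rule. Both composites fix the generators, so the two maps are mutually inverse isomorphisms.

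The only genuine content is the consistency computation for $\delta$. I expect no real obstacle there, since the commutation of $u$ with $f(u)$ and of $v$ with $f(v)$ makes it vanish on the nose.
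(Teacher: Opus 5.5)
Your proposal is correct and follows the same route as the paper. Both arguments check that $\sigma$ and $\delta$ kill the defining relation $uv-qvu$, and both use the same computation, which vanishes because $u$ commutes with $f(u)$ and $v$ commutes with $f(v)$. You also supply two steps the paper leaves implicit: that $\sigma$ is invertible, and the explicit identification of $B_q(f)$ with $\K_q[u,v][w;\sigma,\delta]$ through the universal properties.
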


\begin{proof}
The idea is that $\K_q[u,v]$ is presented by generators $u,v$ and a
single relation $r:=uv-qvu=0$. More precisely, let
\[
F:=\K\langle u,v\rangle
\qquad\text{and}\qquad
I:=\langle r\rangle=\langle uv-qvu\rangle,
\]
so that $\K_q[u,v]=F/I$. Therefore, for a $\K$-linear map $\delta$
defined on the generators to extend correctly to the entire quotient
$\K_q[u,v]=\K\langle u,v\rangle/\langle r\rangle$ as a
$\sigma$-derivation, one must check that the ideal generated by the
defining relation is stable under the appropriate extension of
$\sigma$ and $\delta$; that is,
\[
\sigma(I)\subseteq I
\qquad\text{and}\qquad
\delta(I)\subseteq I.
\]
Since $I$ is generated by $r$, it is enough to verify that
$\sigma(r)\in I$ and $\delta(r)\in I$, equivalently, that
$\sigma(r)=0$ and $\delta(r)=0$ in the quotient. Thus, computing in
the free algebra and passing to the quotient,
\begin{align*}
\sigma(uv-qvu)&=\sigma(u)\sigma(v)-q\sigma(v)\sigma(u)\\
&=(qu)(q^{-1}v)-q(q^{-1}v)(qu)\\
&=uv-qvu=0;
\end{align*}
hence $\sigma$ is well defined on $\K_q[u,v]$.

Writing $\delta(u)=f(v)$ and $\delta(v)=f(u)$, the following is obtained:
\[
\delta(uv)=\sigma(u)f(u)+f(v)v = q\,u\,f(u)+f(v)v;
\]
and
\[
\delta(vu)=\sigma(v)f(v)+f(u)u = q^{-1}v\,f(v)+f(u)u.
\]
Therefore,
\begin{align*}
\delta(uv-qvu)
&=\delta(uv)-q\,\delta(vu)\\
&=\big(q\,u\,f(u)+f(v)v\big)
-q\big(q^{-1}v\,f(v)+f(u)u\big)\\
&=\underbrace{q\,u\,f(u)-q\,f(u)u}_{=\,0}
\;+\;
\underbrace{f(v)v-v\,f(v)}_{=\,0} =0.
\end{align*}
The equalities marked as zero follow from the fact that $u$ commutes
with every polynomial in $u$, and similarly $v$ commutes with every
polynomial in $v$. Hence $\delta(I)\subseteq I$, and therefore
$\delta$ descends to a well-defined $\sigma$-derivation on
$\K_q[u,v]$.
\end{proof}

\begin{proposicion}\label{basepbw} The set  
$
\mathcal{B}:=\{\,v^{\,i}u^{\,j}w^{\,k}\mid i,j,k\in\mathbb{N}\}
$
is a $\K$-basis for $B_q(f)$.
\end{proposicion}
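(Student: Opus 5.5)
The plan is to combine Lemma~\ref{B_q(f)ore} with the standard basis theorem for Ore extensions, in two layers.

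\emph{First layer: the quantum plane.} We show that $\{v^iu^j\mid i,j\in\mathbb{N}\}$ is a $\K$-basis of $\K_q[u,v]=\K\langle u,v\rangle/\langle uv-qvu\rangle$. The cleanest way is to present $\K_q[u,v]$ itself as an Ore extension $\K[v][u;\tau]$, where $\tau$ is the automorphism of $\K[v]$ given by $\tau(v)=q\,v$. Indeed, in $\K[v][u;\tau]$ one has $uv=\tau(v)u=qvu$, and the universal property gives mutually inverse algebra maps between $\K\langle u,v\rangle/\langle uv-qvu\rangle$ and $\K[v][u;\tau]$. Since $\K[v][u;\tau]$ is a free left $\K[v]$-module on $\{u^j\}_{j\ge0}$, the monomials $v^iu^j$ form a $\K$-basis. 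Spanning can also be seen directly: the relation $uv=qvu$ moves every $v$ to the left of every $u$.

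\emph{Second layer: adjoining $w$.} By Lemma~\ref{B_q(f)ore}, $B_q(f)=R[w;\sigma,\delta]$ with $R=\K_q[u,v]$. Two things need checking.

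\begin{enumerate}
\item[(a)] $\sigma$ is an automorphism of $R$ and $\delta$ is a well-defined $\sigma$-derivation. The lemma already verifies that the relation ideal is preserved. Moreover, $\sigma$ has inverse $u\mapsto q^{-1}u$, $v\mapsto qv$.
\item[(b)] The Ore extension really is $B_q(f)$. The relations $wu=\sigma(u)w+\delta(u)=quw+f(v)$ and $wv=q^{-1}vw+f(u)$ are exactly the defining relations. By the universal property of Ore extensions, the surjection from the presented algebra onto $R[w;\sigma,\delta]$ is an isomorphism.
\end{enumerate}

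Granting this, $R[w;\sigma,\delta]$ is a free left $R$-module with basis $\{w^k\}_{k\ge0}$. Hence $\{rw^k\}$, with $r$ running over a $\K$-basis of $R$, is a $\K$-basis of $B_q(f)$. Taking $r=v^iu^j$ yields exactly $\mathcal{B}$.

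The main obstacle is the linear independence in (b). Spanning is easy: the relations let us push $w$ to the right, and the correction terms $f(u)$ and $f(v)$ lower the $w$-degree. Independence, by contrast, requires knowing that no collapse occurs. This is precisely where the Ore-extension structure is used: $R[w;\sigma,\delta]$ is constructed as the set of formal left polynomials $\sum r_kw^k$, with multiplication determined by $wr=\sigma(r)w+\delta(r)$, so independence holds there by construction.

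If more detail is desired, a self-contained alternative is Bergman's diamond lemma. Take the reductions $uv\to qvu$, $wu\to quw+f(v)$, $wv\to q^{-1}vw+f(u)$ under a deg-lex order with $w>u>v$. The only ambiguity is the overlap $wuv$. We resolve it both ways: as $(wu)v$ and as $w(uv)\to q\,wvu$. In each case, $f(v)v=vf(v)$, $f(u)u=uf(u)$ and $f(u)v$ reduce in the same way, so we check that the two results coincide. This is the same computation as $\delta(uv-qvu)=0$ in Lemma~\ref{B_q(f)ore}, so the overlap is resolvable. The irreducible words are then exactly $v^iu^jw^k$.
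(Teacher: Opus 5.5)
Your argument is correct and is essentially the paper's proof. Lemma~\ref{B_q(f)ore} makes $B_q(f)$ a free left $\K_q[u,v]$-module on $\{w^k\}_{k\ge 0}$, and combining this with the basis $\{v^iu^j\}$ of the quantum plane yields $\mathcal{B}$; you additionally justify the quantum-plane basis via $\K[v][u;\tau]$ and identify the presented algebra with the Ore extension via the universal property, both of which the paper takes for granted. One small correction to your optional diamond-lemma variant: plain deg-lex with $w>u>v$ is not compatible with $wu\to quw+f(v)$ once $\deg f\ge 3$ (e.g.\ $v^3>wu$), so compare words first by the number of occurrences of $w$ (or give $w$ weight $\deg f$) before applying deg-lex.
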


\begin{proof}
By Lemma \ref{B_q(f)ore}, the algebra $B_q(f)$  is the Ore extension $\K_q[u,v][w;\sigma,\delta]$, hence, as a $\K$-vector space,  $\K_q[u,v][w;\sigma,\delta]$ is a free left $\K_q[u,v]$-module with basis $\{1,w,w^2,\dots\}$; so that every element can be written uniquely in the form $\sum_{k\ge 0} a_k\,w^k, a_k\in \K_q[u,v].
$ On the other hand, the quantum plane 
 $\K_q[u,v]=K\langle u,v\mid uv=qvu\rangle$
has PBW basis $\{\,v^{\,i}u^{\,j}\mid  i,j\in\mathbb{N}\,\}$. Thus every element of $B_q(f)$ admits a unique write as a finite linear combination of monomials $v^{i}u^{j}w^{k}$. As a consequence, the set $\mathcal{B}$ is a $\K$-basis of $B_q(f)$.
\end{proof}

For a positive integer $k$, setting $p=1$ in (\ref{pqdef}) and $q\neq 1$, the following $q$-number is obtained
\[
[k]_q=\frac{q^k-1}{q-1}=1+q+\cdots+q^{k-1}.
\]
In addition, if $q=1$ then
\[
[k]_1=\underbrace{1+1+\cdots+1}_{k\ \text{times}}=k.
\]
Moreover, the following holds 
\begin{align*}
[k+1]_q
&= (1+q+\cdots+q^{k-1})+q^k
= [k]_q + q^k \\
&= 1 + q(1+q+\cdots+q^{k-1})
= 1 + q[k]_q 
\end{align*}

\begin{lema} If $q\neq 1$ is a primitive $n$-th root of unity, then
\[
[k]_q=0 \quad \text{if and only if} \quad n\mid k.
\]
\end{lema}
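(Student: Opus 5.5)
The plan is to use the closed form $[k]_q=\frac{q^k-1}{q-1}$, which is valid because $q\neq 1$, so that $q-1$ is an invertible element of $\K$. Then $[k]_q=0$ holds if and only if $q^k-1=0$, that is, if and only if $q^k=1$. This turns the statement into a standard fact about the multiplicative order of $q$.

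It remains to show that $q^k=1$ if and only if $n\mid k$, where $n$ is the order of $q$ (here ``primitive $n$-th root of unity'' means exactly that $q^n=1$ and $q^m\neq 1$ for $0<m<n$).

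If $n\mid k$, write $k=nm$. Then $q^k=(q^n)^m=1$.

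Conversely, suppose $q^k=1$. Use Euclidean division to write $k=nm+r$ with $0\le r<n$. Then
\[
1=q^k=(q^n)^m q^r=q^r .
\]
By the minimality of $n$, the only possibility is $r=0$, so $n\mid k$.

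Combining the two equivalences gives the lemma. There is no real obstacle. The only point needing care is that the division by $q-1$ is legitimate, which is exactly where the hypothesis $q\neq 1$ is used. One might also note that the same argument works in any characteristic, since only the field structure of $\K$ is needed. If one prefers to avoid division altogether, the identity $(q-1)[k]_q=q^k-1$ together with $q-1\neq 0$ gives the same conclusion.
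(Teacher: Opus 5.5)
Your proof is correct and takes essentially the same route as the paper. Both use the closed form (legitimate because $q\neq 1$) to reduce $[k]_q=0$ to $q^k=1$, and then use the order of $q$. The only difference is that you write out the Euclidean-division argument for $q^k=1 \iff n\mid k$, which the paper simply cites as a standard fact.
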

\begin{proof}
 Since $q\neq 1$, it follows that,
\begin{align*}
[k]_q=0
&\Longleftrightarrow
\frac{q^k-1}{q-1}=0
\Longleftrightarrow
q^k-1=0 \\
&\Longleftrightarrow
q^k=1.
\end{align*}
If $n$  denotes the order of $q$, then 
\[
q^k=1
\quad\Longleftrightarrow\quad
n\mid k
\quad\Longleftrightarrow\quad
k\equiv 0 \pmod n.
\]
\end{proof}
More generally, the \textbf{$q$-factorial} is defined as
$$[k]_q! = [k]_q [k-1]_q \cdots [1]_q.$$
Using the above, the \textbf{Gaussian binomial coefficients} are defined as follows:
\[
\binom{k}{i}_q=\frac{[k]_q!}{[i]_q!\,[k-i]_q!}.
\]
Furthermore, if $f\in \K[t]$is a polynomial of degree $d$ written explicitly as
\[
f(t)=\sum_{j=0}^{d} c_j\,t^{j}\qquad (c_j\in \K,\; c_d\neq 0),
\]
the \textbf{support} of $f$ consists of
\[
\operatorname{supp}(f)=\{\, j \mid c_j\neq 0 \,\}.
\]
The identities below will be useful in the proof of the PI property for the algebra $B_q(f)$.

\begin{lema}{\cite[Lemma 3.1]{GaddisYee2025}}\label{lema3.1}
Let $\K$ be a field and $q\in \K^{*}$. Suppose that $f\in \K[t]$has degree $d$ and
\[
f(t)=\sum_{j=0}^{d}c_j t^j\qquad (c_j\in \K,\;c_d\neq 0).
\]
Then, for every integer $k\ge 1$, the following identities hold
\begin{align}
\delta(u^k) &= \sum_{j=0}^{d} [k]_{q^{\,j+1}}\; c_j\, v^j\, u^{k-1},
\label{eq:delta-uk}\\[2mm]
\delta(v^k) &= \sum_{j=0}^{d} [k]_{q^{-(j+1)}}\; c_j\, u^j\, v^{k-1}, \label{eq:delta-vk}
\end{align}
where $\delta$ is the $\sigma$-derivation defined in Lemma \ref{B_q(f)ore}.
\end{lema}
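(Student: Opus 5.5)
The plan is to prove both identities by induction on $k$, using only the $\sigma$-derivation rule $\delta(ab)=\sigma(a)\delta(b)+\delta(a)b$ from Lemma \ref{B_q(f)ore}, the quantum plane relation $uv=qvu$, and the recursion $[k+1]_x=1+x[k]_x$ established above. That recursion holds for every $x\in\K^*$, including $x=1$ with the convention $[k]_1=k$, so no case distinction is needed when $q^{j+1}=1$.

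\textbf{Base case.} For $k=1$, both formulas reduce to $\delta(u)=\sum_j c_j v^j=f(v)$ and $\delta(v)=f(u)$, because $[1]_x=1$.

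\textbf{Induction step for (\ref{eq:delta-uk}).} I peel off the leftmost factor: $\delta(u^{k+1})=\delta(u\cdot u^k)=\sigma(u)\delta(u^k)+\delta(u)u^k$. Substituting the induction hypothesis gives
\[
\delta(u^{k+1})=\sum_{j=0}^{d}[k]_{q^{j+1}}\,c_j\,q\,u\,v^j u^{k-1}+\sum_{j=0}^{d}c_j\,v^j u^k .
\]
Next I need the commutation $u v^j=q^j v^j u$, which follows from $uv=qvu$ by a routine induction on $j$. With it, $q\,u\,v^ju^{k-1}=q^{j+1}v^ju^k$. The coefficient of $c_jv^ju^k$ then becomes $q^{j+1}[k]_{q^{j+1}}+1=[k+1]_{q^{j+1}}$, which is exactly the claimed formula for $k+1$.

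\textbf{Induction step for (\ref{eq:delta-vk}).} The argument is symmetric. I write $\delta(v^{k+1})=\sigma(v)\delta(v^k)+\delta(v)v^k=q^{-1}v\,\delta(v^k)+f(u)v^k$. I then use $vu^j=q^{-j}u^jv$, again from $uv=qvu$, to move $v$ past $u^j$. This produces the coefficient $q^{-(j+1)}[k]_{q^{-(j+1)}}+1=[k+1]_{q^{-(j+1)}}$.

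There is no real obstacle in this argument. The only point needing care is choosing the factorization $u\cdot u^k$ rather than $u^k\cdot u$. With $u\cdot u^k$, the term $\delta(u)u^k$ is already in the normal form $v^ju^k$. The other split would force me to commute $f(v)$ past $\sigma(u^k)=q^ku^k$, and the $q$-numbers would only come out right after an extra manipulation.
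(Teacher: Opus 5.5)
Your proposal is correct and follows essentially the same route as the paper's proof. Both argue by induction on $k$ using $\delta(u\cdot u^k)=\sigma(u)\delta(u^k)+\delta(u)u^k$ (and its analogue for $v$), the commutations $uv^j=q^jv^ju$ and $vu^j=q^{-j}u^jv$, and the recursion $[k+1]_x=1+x[k]_x$.
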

\begin{proof}
Recall that
\[
\begin{aligned}
\sigma(u) &= q\,u, &\qquad \sigma(v) &= q^{-1}v,\\
\delta(u) &= f(v)=\sum_{j=0}^{d}c_j v^j, &\qquad 
\delta(v) &= f(u)=\sum_{j=0}^{d}c_j u^j.
\end{aligned}
\]
Moreover, for $j\ge 0$ one has 
\begin{equation}\label{eq:comm-uvj}
u\,v^j=q^j v^j u \qquad  \textit{and} \qquad  v\,u^j=q^{-j}u^j v. 
\end{equation}
The proof proceeds by induction on $k\ge 1$. For $k=1$, since $[1]_q=1$ for every $q\in \K^{*}$, one obtains  
\begin{align*}
\sum_{j=0}^{d}[1]_{q^{j+1}}\,c_j\,v^j\,u^{1-1}
&=\sum_{j=0}^{d}c_j\,v^j=f(v)=\delta(u),\\
\sum_{j=0}^{d}[1]_{q^{-(j+1)}}\,c_j\,u^j\,v^{1-1}
&=\sum_{j=0}^{d}c_j\,u^j
=f(u)
=\delta(v).
\end{align*}
Now, suppose that identity  \eqref{eq:delta-uk} holds for $k$. Then,
\[
\begin{aligned}
\delta(u^{k+1})
&=\delta(u\,u^k)\\
&=\sigma(u)\,\delta(u^k)+\delta(u)\,u^k\\
&=qu\,\delta(u^k)+f(v)\,u^k\\
&=qu\left(\sum_{j=0}^{d}[k]_{q^{j+1}}\,c_j\,v^j\,u^{k-1}\right)
   +\left(\sum_{j=0}^{d}c_j\,v^j\right)u^k \\
&=\sum_{j=0}^{d}[k]_{q^{j+1}}\,c_j\,q(u\,v^j)\,u^{k-1}
   +\sum_{j=0}^{d}c_j\,v^j\,u^k \\
&=\sum_{j=0}^{d}[k]_{q^{j+1}}\,c_j\,q\,(q^j v^j u)\,u^{k-1}
   +\sum_{j=0}^{d}c_j\,v^j\,u^k\\ 
&=\sum_{j=0}^{d}\bigl(q^{j+1}[k]_{q^{j+1}}+1\bigr)c_j\,v^j\,u^k \\ 
&=\sum_{j=0}^{d}[k+1]_{q^{j+1}}\,c_j\,v^j\,u^k.
\end{aligned}
\]
Similarly, suppose that identity \eqref{eq:delta-vk} holds for $k$. Thus,
\[
\begin{aligned}
\delta(v^{k+1})
&=\delta(v\,v^k)\\
&=\sigma(v)\,\delta(v^k)+\delta(v)\,v^k\\
&=q^{-1}v\,\delta(v^k)+f(u)\,v^k\\
&=q^{-1}v\left(\sum_{j=0}^{d}[k]_{q^{-(j+1)}}\,c_j\,u^j\,v^{k-1}\right)
   +\left(\sum_{j=0}^{d}c_j\,u^j\right)v^k\\
&=\sum_{j=0}^{d}[k]_{q^{-(j+1)}}\,c_j\,(q^{-1}v\,u^j)\,v^{k-1}
   +\sum_{j=0}^{d}c_j\,u^j\,v^k \\
&=\sum_{j=0}^{d}[k]_{q^{-(j+1)}}\,c_j\,q^{-1}\,(q^{-j}u^j v)\,v^{k-1}
   +\sum_{j=0}^{d}c_j\,u^j\,v^k\\
&=\sum_{j=0}^{d}\bigl(q^{-(j+1)}[k]_{q^{-(j+1)}}+1\bigr)c_j\,u^j\,v^k\\[2mm]
&=\sum_{j=0}^{d}[k+1]_{q^{-(j+1)}}\,c_j\,u^j\,v^k.
\end{aligned}
\]
\end{proof}
\begin{lema}{\cite[Lemma 3.2]{GaddisYee2025}}\label{lema3.2}
Let $\K$ be a field and  $q\in \K^{*}$. Suppose that $f\in \K[t]$ has degree $d$. 
Then, for every integer $k\ge 1$, one has
\begin{align}
w\,u^{k} &= q^{k}\,u^{k}\,w \;+\; \sum_{j=0}^{d}[k]_{q^{j+1}}\,c_j\,v^{j}\,u^{k-1}, 
\label{eq:delta-uk3}\\[2mm]
w\,v^{k} &= q^{-k}\,v^{k}\,w \;+\; \sum_{j=0}^{d}[k]_{q^{-(j+1)}}\,c_j\,u^{j}\,v^{k-1}.
\label{eq:delta-uk4}
\end{align}
\end{lema}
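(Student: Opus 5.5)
The identities follow directly from the Ore extension structure in Lemma \ref{B_q(f)ore} together with the formulas for $\delta(u^k)$ and $\delta(v^k)$ in Lemma \ref{lema3.1}. In the Ore extension $\K_q[u,v][w;\sigma,\delta]$, the defining commutation rule is
\[
w\,a=\sigma(a)\,w+\delta(a)\qquad\text{for all } a\in \K_q[u,v].
\]
So I will apply this with $a=u^k$ and $a=v^k$. This needs two ingredients: the values $\sigma(u^k)$, $\sigma(v^k)$, and the values $\delta(u^k)$, $\delta(v^k)$.

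\textbf{The automorphism part.} Since $\sigma$ is an algebra endomorphism with $\sigma(u)=qu$ and $\sigma(v)=q^{-1}v$, multiplicativity gives
\[
\sigma(u^k)=q^k u^k,\qquad \sigma(v^k)=q^{-k}v^k.
\]
If one prefers an explicit argument, this is a one-line induction on $k$.

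\textbf{The derivation part.} The expressions for $\delta(u^k)$ and $\delta(v^k)$ are exactly \eqref{eq:delta-uk} and \eqref{eq:delta-vk} of Lemma \ref{lema3.1}. Substituting both ingredients into $wu^k=\sigma(u^k)w+\delta(u^k)$ yields \eqref{eq:delta-uk3}, and substituting into $wv^k=\sigma(v^k)w+\delta(v^k)$ yields \eqref{eq:delta-uk4}.

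\textbf{Consistency with the algebra relations.} I need to confirm that the Ore-extension rule $wa=\sigma(a)w+\delta(a)$ really matches the relations of $B_q(f)$, with no left/right convention mismatch. For $k=1$ it reproduces the defining relations $wu=quw+f(v)$ and $wv=q^{-1}vw+f(u)$, so the conventions agree.

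\textbf{Alternative route.} As a cross-check, or if one wants to avoid invoking the general Ore rule for arbitrary $a$, there is a direct induction on $k$. Write
\[
wu^{k+1}=(wu^k)u,
\]
apply the inductive hypothesis, then use $wu=quw+f(v)$ together with $v^ju=q^{-j}uv^j$. This reorganisation reproduces the recursion $q^{j+1}[k]_{q^{j+1}}+1=[k+1]_{q^{j+1}}$ already verified in the proof of Lemma \ref{lema3.1}.

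\textbf{Main obstacle.} The only real point of care is the ordering of the monomials $v^ju^{k-1}$ and $u^jv^{k-1}$, so that the coefficients match the $q$-number indices. Because these are taken verbatim from Lemma \ref{lema3.1}, no new computation is required.
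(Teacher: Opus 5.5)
Your proposal is correct and matches the paper's proof. Both apply the Ore rule $wa=\sigma(a)w+\delta(a)$ with $a=u^k$ (resp. $a=v^k$), use $\sigma(u^k)=q^ku^k$ (resp. $\sigma(v^k)=q^{-k}v^k$), and substitute the formulas for $\delta(u^k)$ and $\delta(v^k)$ from Lemma~\ref{lema3.1}. (One harmless slip in your optional cross-check: expanding $wu^{k+1}=(wu^k)u$ actually produces the recursion $[k]_{x}+x^k=[k+1]_{x}$ with $x=q^{j+1}$, not $x[k]_{x}+1=[k+1]_{x}$; both identities hold, so the induction still closes.)
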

\begin{proof}
Remember that $\sigma(u)=q\,u$; hence,  \[
\sigma(u^k)=\sigma(\underbrace{u\cdots u}_{k\text{ times}})
=\underbrace{\sigma(u)\cdots \sigma(u)}_{k\text{ times }}
=(qu)^k=q^k u^k.
\]
By Lemma \ref{B_q(f)ore}, the following relation holds in  $B_q(f)$: 
\begin{equation}\label{eq:Ore-rule}
w\,a=\sigma(a)\,w+\delta(a)\qquad \text{for all }a\in K_q[u,v].
\end{equation}
Setting $a=u^k$ in \eqref{eq:Ore-rule}, one obtains
\[
w\,u^k=\sigma(u^k)\,w+\delta(u^k)=q^k u^k w+\delta(u^k).
\]
By identity  \eqref{eq:delta-uk} of Lemma \ref{lema3.1},
\[
\delta(u^k)=\sum_{j=0}^{d}[k]_{q^{j+1}}\,c_j\,v^j\,u^{k-1}.
\]
Substituting this into the preceding equality, one concludes that
\[
w\,u^k=q^k u^k w+\sum_{j=0}^{d}[k]_{q^{j+1}}\,c_j\,v^j\,u^{k-1}.
\]
The proof of identity (\ref{eq:delta-uk4}) is obtained by arguments analogous to those above.
\end{proof}

\begin{lema}{\cite[Lemma 3.3]{GaddisYee2025}}\label{lema3.3}
Let $q$ be a primitive $n$-th root of unity, with $n\geq 2$. Suppose that $n\nmid (j+1)$ for every  $j \in \operatorname{supp}(f)$.
Then $u^{n}$, $v^{n}\in Z(B_q(f))$.
\end{lema}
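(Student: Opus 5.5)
The plan is to check directly that $u^{n}$ and $v^{n}$ commute with the three generators $u,v,w$ of $B_q(f)$. Since the center is a subalgebra, commuting with all generators is enough.

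First, the generators $u$ and $v$. Trivially $u$ commutes with $u^n$. From $uv=qvu$ we get $v\,u^{j}=q^{-j}u^{j}v$, which is \eqref{eq:comm-uvj}. Taking $j=n$ and using $q^{n}=1$ gives $vu^{n}=u^{n}v$. In the same way, $u\,v^{n}=q^{n}v^{n}u=v^{n}u$.

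Second, the generator $w$, which is the substantive step. Apply Lemma \ref{lema3.2} with $k=n$:
\[
w\,u^{n}=q^{n}u^{n}w+\sum_{j=0}^{d}[n]_{q^{j+1}}\,c_j\,v^{j}u^{n-1}.
\]
Since $q^n=1$, the first term is $u^nw$, so it suffices to show that every term of the sum vanishes. Terms with $j\notin\operatorname{supp}(f)$ vanish because $c_j=0$. For $j\in\operatorname{supp}(f)$, the hypothesis $n\nmid(j+1)$ and the fact that $q$ is a primitive $n$-th root of unity give $q^{j+1}\neq 1$. Hence
\[
[n]_{q^{j+1}}=\frac{(q^{j+1})^{n}-1}{q^{j+1}-1}=\frac{(q^{n})^{j+1}-1}{q^{j+1}-1}=0 .
\]
So $wu^{n}=u^{n}w$.

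The identity for $v^n$ in Lemma \ref{lema3.2} is handled the same way. The coefficients are now $[n]_{q^{-(j+1)}}$, and $q^{-(j+1)}\neq 1$ exactly when $n\nmid (j+1)$. Therefore $[n]_{q^{-(j+1)}}=((q^{-n})^{j+1}-1)/(q^{-(j+1)}-1)=0$, and $q^{-n}=1$ gives $wv^{n}=v^{n}w$.

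The only delicate point is making sure the quantum integer vanishes, and this is exactly where the hypothesis is needed. If $q^{j+1}=1$ for some $j\in\operatorname{supp}(f)$, then $[n]_{q^{j+1}}=[n]_1=n$, which is nonzero in characteristic zero. In that case the correction term $n\,c_j v^ju^{n-1}$ would survive, and $u^n$ would fail to be central. Once this is checked, $u^n$ and $v^n$ commute with $u,v,w$, so both lie in $Z(B_q(f))$.
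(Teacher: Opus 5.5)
Your proof is correct and takes essentially the same route as the paper. You get commutation with $v$ from $vu^{n}=q^{-n}u^{n}v$, and commutation with $w$ from Lemma~\ref{lema3.2} at $k=n$, where $[n]_{q^{j+1}}=0$ because $q^{j+1}\neq 1$ while $(q^{j+1})^{n}=1$; you also write out the $v^{n}$ case that the paper only calls analogous.
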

\begin{proof}
First of all, the element $u^n$ commutes with $v$ and $w$. Indeed,  recall that $v u^n=q^{-n}u^n v$. Since $q^{-n}=1$, it follows that $v u^n=u^n v$. Additionally,  applying identity (\ref{eq:delta-uk3}) of Lemma \ref{lema3.2} with $k=n$, one has
\[
w u^n \;=\; q^n u^n w \;+\;\sum_{j=0}^{d}[n]_{q^{j+1}}\,c_j\,v^j\,u^{n-1}.
\]
If  $j\notin \operatorname{supp}(f)$, then $c_j=0$  and the corresponding summand vanishes. If $j\in \operatorname{supp}(f)$, then  $c_j\neq 0$ and by hypothesis $n\nmid (j+1)$, so  in particular $q^{j+1}\neq 1$.  However, $(q^{j+1})^n=(q^n)^{j+1}=1$, so $q^{j+1}$ is an $n$-th root of unity distinct from  $1$. Thus, 
\[
[n]_{q^{j+1}}=\frac{(q^{j+1})^n-1}{q^{j+1}-1}=\frac{1-1}{q^{j+1}-1}=0.
\]
Consequently, $w u^n=u^n w$, that is $u^n$ commutes with $w$. Therefore $u^n \in Z\!\big(B_q(f)\big)$. The proof for $v^n$ is obtained by arguments analogous to those above.

\end{proof}

The hypothesis $n\nmid (j+1)$  for every $j\in \operatorname{supp}(f)$ is essential. For instance,
if $f(t)=t^8$ and $q^3=1$, then $n=3$ and $\operatorname{supp}(f)=\{8\}$, but
\[
n\mid(8+1)=9,
\]
so $q^{8+1}=q^9=(q^3)^3=1$. In this case, identity \eqref{eq:delta-uk3} does not yield the desired vanishing, since $[k]_{q^{8+1}}=[k]_{1} =k$ 
and therefore 
\[
\sum_{j=0}^{8}[k]_{q^{j+1}}\,c_j\,v^j\,u^{k-1}
=
[k]_{q^{8+1}}\,c_8\,v^8\,u^{k-1}
=
[k]_{q^{9}}\,v^8\,u^{k-1}= kv^8\,u^{k-1}
\]
Since the base field $\K$ is assumed to have $\operatorname{char}(\K)=0$,
then $k$ is non-zero in $\K$, for every $k\in \Z^+$. In particular
the coefficient $k$ does not vanish  and the term 
$k\,v^8u^{k-1}$ is non-zero. So the conclusions of the two preceding lemmas need not hold.


\begin{lema}\label{wnidentidad}
For every $k\ge 1$ and polynomial $f\in \K[t]$, the following holds
\begin{equation}\label{eq:3.7}
w^{k}u \;=\; q^{k}\,u\,w^{k}\;+\;\sum_{i=0}^{k-1} w^{\,k-1-i}\, f(v)\, (q w)^{i}.
\end{equation}
\end{lema}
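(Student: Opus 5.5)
Induction on $k$, using only the defining relation $wu = quw + f(v)$ of $B_q(f)$ and associativity; no commutation between $f(v)$ and $w$ is needed. The claimed formula keeps $f(v)$ sandwiched between powers of $w$.

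For the base case $k=1$ the right-hand side of \eqref{eq:3.7} is $q\,u\,w + w^{0} f(v) (qw)^{0} = quw + f(v)$. This is exactly the relation $wu = quw + f(v)$.

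For the inductive step, assume \eqref{eq:3.7} holds for some $k\ge 1$. Write $w^{k+1}u = w\,(w^{k}u)$ and substitute the hypothesis:
\[
w^{k+1}u = q^{k}\,(wu)\,w^{k} + \sum_{i=0}^{k-1} w^{\,k-i} f(v)(qw)^{i}.
\]
Replacing $wu$ by $quw + f(v)$ in the first term gives $q^{k+1}u\,w^{k+1} + q^{k} f(v) w^{k}$. Since $q$ is a scalar, $q^{k} f(v) w^{k} = w^{0} f(v)(qw)^{k}$, which is precisely the $i=k$ term of the sum for $k+1$. The remaining sum $\sum_{i=0}^{k-1} w^{(k+1)-1-i} f(v)(qw)^{i}$ supplies the terms $i=0,\dots,k-1$. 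Together these give \eqref{eq:3.7} for $k+1$.

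The only step needing care is bookkeeping: multiplying on the left by $w$ (not on the right) is what raises each exponent $w^{k-1-i}$ to $w^{k-i}$. The new boundary term then fits the pattern as $i=k$, with scalar $q^{k}$ absorbed into $(qw)^{k}$. I expect no genuine obstacle.
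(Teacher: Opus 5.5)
Your proposal is correct and follows the paper's proof essentially verbatim: induction on $k$, multiplying the hypothesis on the left by $w$, replacing $wu$ by $quw+f(v)$, and absorbing $q^{k}f(v)w^{k}=f(v)(qw)^{k}$ as the new $i=k$ summand. Your remark that no commutation between $f(v)$ and $w$ is needed is accurate; in the paper, that simplification only enters later, in Lemma~\ref{lema3.7GY}, under the hypothesis that $f(v)$ is central.
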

\begin{proof}
Identity   \eqref{eq:3.7} is established by induction on $k\ge 1$. For $k=1$ the identity reduces to
\[
w^1u = q^1u w^1 + \sum_{i=0}^{0} w^{0}f(v)(qw)^0
= quw + f(v),
\] 
which coincides with one of the defining relations of the algebra $B_q(f)$. Suppose that identity \eqref{eq:3.7} holds for $k$. Multiplying on the left by $w$ and distributing, one obtains
\[
w^{k+1}u = w(w^ku)
= q^{k}\, w u w^{k} + \sum_{i=0}^{k-1} w^{k-i} f(v)(qw)^i.
\]
Applying the case $k=1$ to replace $wu$, one has
\[
wuw^k = (quw+f(v))w^k = quw^{k+1}+f(v)w^k.
\]
Therefore,
\[
\begin{aligned}
w^{k+1}u
&= q^{k}\bigl(quw^{k+1}+f(v)w^k\bigr)
  + \sum_{i=0}^{k-1} w^{k-i} f(v)(qw)^i \\
&= q^{k+1}u w^{k+1} + q^{k} f(v) w^k
  + \sum_{i=0}^{k-1} w^{k-i} f(v)(qw)^i\\
&= q^{k+1}u w^{k+1} + \sum_{i=0}^{k} w^{k-i} f(v)(qw)^i. 
\end{aligned}
\]
which completes the induction.
\end{proof}

\begin{lema}{\cite[Lemma 3.7]{GaddisYee2025}}\label{lema3.7GY}
Suppose that $q$ is a primitive $n$-th root of unity with $n\geq 2$,
and let $f=\sum_{j=0}^{d} c_j t^j\in \K[t]$, with
$\operatorname{supp}(f)=\{\,j\mid c_j\neq 0\,\}$.
Then $f(u)$, $f(v)\in Z(B_q(f))$ if and only if \;\;$n\mid j$ for every  
$j \in \operatorname{supp}(f)$ 
In particular, if $n\mid j$ for every $j\in \operatorname{supp}(f)$, then
$w^n\in Z(B_q(f))$.
\end{lema}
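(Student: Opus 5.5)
We first determine when $f(u)$ and $f(v)$ are central, and then use Lemma~\ref{wnidentidad} with $k=n$ to handle $w^n$.

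\emph{Centrality of $f(u)$.} Write $f(u)=\sum_j c_j u^j$. It commutes with $u$. Since $vu^j=q^{-j}u^jv$, we get
\[
v f(u)=\sum_j c_j q^{-j}u^j v .
\]
The monomials $u^jv$ are distinct PBW elements by Proposition~\ref{basepbw}. Hence $vf(u)=f(u)v$ holds if and only if $c_j(q^{-j}-1)=0$ for all $j$, that is, if and only if $n\mid j$ for every $j\in\operatorname{supp}(f)$. This already gives the forward implication of the equivalence.

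For the converse, assume $n\mid j$ for all $j\in\operatorname{supp}(f)$. We still have to show that $f(u)$ commutes with $w$. By Lemma~\ref{lema3.2} with $k=j=nm$,
\[
wu^{j}=q^{j}u^{j}w+\sum_{i}[j]_{q^{i+1}}c_i v^iu^{j-1}.
\]
Here $q^j=1$. For $i\in\operatorname{supp}(f)$ we have $n\mid i$, so $q^{i+1}=q\neq1$ and $q$ is a primitive $n$-th root of unity. Since $n\mid j$, the lemma on $q$-numbers gives $[j]_q=0$. Thus $wu^j=u^jw$ for every $j\in\operatorname{supp}(f)$, and so $f(u)$ commutes with $w$. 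The argument for $f(v)$ is symmetric, using $uv^j=q^jv^ju$ and identity \eqref{eq:delta-uk4}, where $q^{-(i+1)}=q^{-1}\neq 1$. This proves the equivalence.

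\emph{Centrality of $w^n$.} Assume the support condition. By Lemma~\ref{wnidentidad} with $k=n$, and since $f(v)$ is now central,
\[
w^nu=q^nuw^n+\sum_{i=0}^{n-1}q^iw^{n-1-i}f(v)w^i=uw^n+\Big(\sum_{i=0}^{n-1}q^i\Big)f(v)w^{n-1}.
\]
The bracketed sum is $[n]_q=0$, so $w^nu=uw^n$. For $v$ we first prove the analogue of Lemma~\ref{wnidentidad},
\[
w^kv=q^{-k}vw^k+\sum_{i=0}^{k-1}w^{k-1-i}f(u)(q^{-1}w)^i,
\]
by the same induction from $wv=q^{-1}vw+f(u)$. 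Centrality of $f(u)$ then gives the coefficient $[n]_{q^{-1}}=0$, so $w^nv=vw^n$. Finally $w^n$ commutes with $w$, so $w^n\in Z(B_q(f))$.

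The main care point is the forward implication: it relies on linear independence of the PBW monomials, and on observing that the conditions obtained are exactly $q^{-j}=1$ for the support exponents. Everything else is a direct specialization of the earlier identities.
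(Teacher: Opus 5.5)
Your proof is correct and follows essentially the same route as the paper. The forward implication comes from comparing $vf(u)$ with $f(u)v$: you use linear independence of the $u^jv=q^jvu^j$, which are nonzero multiples of PBW monomials, where the paper cancels $v$ using that $B_q(f)$ is a domain. The converse comes from the vanishing of $[j]_{q^{i+1}}=[j]_q$ in the formula for $wu^j$, and centrality of $w^n$ from Lemma~\ref{wnidentidad} (and its $v$-analogue) together with $[n]_q=[n]_{q^{-1}}=0$. The only detail to tidy is that $j=0\in\operatorname{supp}(f)$ should be handled separately, trivially since $u^0=1$, because Lemma~\ref{lema3.2} is stated only for $k\ge 1$.
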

\begin{proof}
Suppose that $f(u),f(v)\in Z(B_q(f))$.
In particular, $f(u)v=vf(u)$. Using the fact that $u^jv=q^jvu^j$ for every $j\ge 0$, one computes,
\[
\begin{aligned}
f(u)v
&=\left(\sum_{j=0}^{d}c_j u^j\right)v
=\sum_{j=0}^{d}c_j u^jv
=\sum_{j=0}^{d}c_j q^j v u^j\\
&= v\left(\sum_{j=0}^{d}c_j (qu)^j\right)
= v\,f(qu).
\end{aligned}
\]
Therefore,  $vf(u)=vf(qu)$ and  $v\bigl(f(u)-f(qu)\bigr)=0$. Since $B_q(f)$ is a domain, it follows that $f(u)=f(qu)$; that is,
$\sum_{j=0}^{d}c_j u^j=\sum_{j=0}^{d}c_j q^j u^j$ from which it follows that 
$\sum_{j=0}^{d}c_j(1-q^j)u^j=0$.
However,  the powers  $1,u,u^2,\dots$ are linearly independent, then
$c_j(1-q^j)=0$ for every $j$. In fact,  if $j\in\operatorname{supp}(f)$ one obtains $q^j=1$.
Since $q$ is a primitive root of order $n$,  this is equivalent to  $n\mid j$.\\

Conversely, suppose that $n\mid j$, for every $j\in\operatorname{supp}(f)$. Thus, 
\[
\begin{aligned}
f(qu)
&=\sum_{j=0}^{d}c_j (qu)^j
=\sum_{j=0}^{d}c_j q^j u^j
=f(u),\\
f(q^{-1}v)
&=\sum_{j=0}^{d}c_j (q^{-1}v)^j
=\sum_{j=0}^{d}c_j q^{-j}v^j
=f(v).
\end{aligned}
\]
It follows that
\[
\begin{aligned}
f(u)v &= v f(qu)= v f(u),\\
u f(v) &= f(q^{-1}v)\,u = f(v)\,u;
\end{aligned}
\]
that is, $f(u)$  commutes with $v$ and $f(v)$ commutes with $u$. 
To verify that both elements also commute with $w$,  nota that
$\delta(f(u))=\sum_{j=0}^{d}c_j\,\delta(u^j)$. Applying identity  (\ref{eq:delta-uk}) of Lemma \ref{lema3.1},
\[
\delta(u^j)=\left(\sum_{i=0}^{d}[j]_{q^{i+1}}\,c_i\,v^i\right)u^{j-1},
\]
so that
\[
\delta(f(u))=\sum_{j=0}^{d}c_j\left(\sum_{i=0}^{d}[j]_{q^{i+1}}\,c_i\,v^i\right)u^{j-1}.
\]
If $c_i\neq 0$, then $i\in\operatorname{supp}(f)$ and  $n\mid i$. Because $n\geq 2$, one has $n\nmid (i+1)$ and  $q^{i+1}\neq 1$.
Moreover, if $c_j\neq 0$, then  $n\mid j$ and so  $(q^{i+1})^j=q^{(i+1)j}=1$.
Hence,
\[
[j]_{q^{i+1}}=\frac{(q^{i+1})^j-1}{q^{i+1}-1}=0.
\]
One concludes that $\delta(f(u))=0$, and by analogous arguments the equality $\delta(f(v))=0$ is obtained. Thus,
\[
wf(u)=\sigma(f(u))w+\delta(f(u))=f(qu)w=f(u)w.
\]
Likewise $wf(v)=f(v)w$ and, in consequence, $f(u),f(v)\in Z(B_q(f))$.

It remains to show that $w^n$ is a central element. Applying Lemma \ref{wnidentidad}, 
\[
w^n u=q^n u w^n+\sum_{i=0}^{n-1}w^{n-1-i}f(v)(qw)^i.
\]
Indeed, 
\[
\begin{aligned}
\sum_{i=0}^{n-1}w^{n-1-i}f(v)(qw)^i
&=\sum_{i=0}^{n-1}q^i\,f(v)\,w^{n-1}
=\left(\sum_{i=0}^{n-1}q^i\right)f(v)\,w^{n-1}\\
&=[n]_q\,f(v)\,w^{n-1},
\end{aligned}
\]
so $w^nu=q^nuw^n+[n]_q\,f(v)\,w^{n-1}$.
Since $q$ is a primitive $n$-th root of unity it follows that $[n]_q=0$ and $w^n u=uw^n$.
By analogous arguments identity  $w^n v=vw^n$ is proved, and therefore $w^n\in Z(B_q(f))$.
\end{proof}


\begin{proposicion}\label{propiedadpindividej}
Suppose that $q$ is a primitive 
 $n$-th  root of unity with $n\geq 2$, and consider the algebra $B_q(f)$. If $n\mid j$, for every $j\in \operatorname{supp}(f)$, then  $B_q(f)$ is a PI algebra.
\end{proposicion}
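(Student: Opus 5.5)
The plan follows the template used throughout the paper: exhibit a central subalgebra over which $B_q(f)$ is a finitely generated module, then invoke \cite[\S 13.1.13, Corollary]{mcconnell}. The natural candidates are $u^n$, $v^n$ and $w^n$.

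\textbf{Step 1: $w^n$ is central.} This is exactly the last assertion of Lemma~\ref{lema3.7GY}, under the present hypothesis $n\mid j$ for all $j\in\operatorname{supp}(f)$.

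\textbf{Step 2: $u^n$ and $v^n$ are central.} Lemma~\ref{lema3.3} cannot be quoted directly, because its hypothesis is $n\nmid(j+1)$. However, if $n\mid j$ and $n\geq 2$, then $j+1\equiv 1 \pmod n$, so $n\nmid (j+1)$ for every $j\in\operatorname{supp}(f)$. Hence Lemma~\ref{lema3.3} applies and gives $u^n,v^n\in Z(B_q(f))$.

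For the reader's benefit, I would also spell out the underlying computation. By Lemma~\ref{lema3.2} with $k=n$ we have $q^n=1$. Moreover, $[n]_{q^{j+1}}=0$ because $q^{j+1}=q\neq 1$ is an $n$-th root of unity. Together with $vu^n=q^{-n}u^nv=u^nv$, this shows $u^n$ is central; the argument for $v^n$ is symmetric.

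\textbf{Step 3: finite generation.} Put $C:=\K[u^n,v^n,w^n]\subseteq Z(B_q(f))$. By Proposition~\ref{basepbw}, every element of $B_q(f)$ is a combination of the monomials $v^iu^jw^k$. Write $i=n a+r$, $j=nb+s$, $k=nc+t$ with $0\le r,s,t<n$. Since $u^n$, $v^n$ and $w^n$ are central, they can be pulled out of the monomial:
\[
v^iu^jw^k=(v^n)^a(u^n)^b(w^n)^c\,v^ru^sw^t .
\]
No reordering of non-central letters is needed here, so no correction terms arise. Hence $B_q(f)$ is generated as a $C$-module by the $n^3$ elements $v^ru^sw^t$ with $0\le r,s,t<n$.

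\textbf{Step 4: conclusion.} Applying \cite[\S 13.1.13, Corollary]{mcconnell} to this finite module over a central subalgebra shows that $B_q(f)$ is PI.

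The only point needing care is Step~2, namely noticing that the divisibility hypothesis $n\mid j$ automatically implies the non-divisibility hypothesis $n\nmid(j+1)$ of Lemma~\ref{lema3.3}, which uses $n\ge 2$. The remaining steps are routine.
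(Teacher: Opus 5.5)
Your proof is correct and follows the paper's argument essentially step for step. You get $w^n$ central from Lemma~\ref{lema3.7GY}, and $u^n,v^n$ central from Lemma~\ref{lema3.3} via the observation that $n\mid j$ and $n\ge 2$ force $n\nmid(j+1)$; you then reduce the PBW monomials modulo the central $n$-th powers to a finite generating set and conclude with \cite[\S 13.1.13, Corollary]{mcconnell}. Your side check $q^{j+1}=q\neq 1$ is a nice direct verification, but the sentence ``By Lemma~\ref{lema3.2} with $k=n$ we have $q^n=1$'' should read ``apply Lemma~\ref{lema3.2} with $k=n$ and use $q^n=1$''.
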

\begin{proof}
By the particular case of Lemma~\ref{lema3.7GY}, one has $w^n\in Z\!\big(B_q(f)\big)$.
Moreover, since $n\mid j$ for every $j\in \operatorname{supp}(f)$
and $n\geq 2$, it follows that $n\nmid (j+1)$ for every
$j\in \operatorname{supp}(f)$; otherwise $n$ would divide
$(j+1)-j=1$. Hence the hypotheses of Lemma~\ref{lema3.3}
are satisfied, and therefore $u^n$ and $v^n$ are central elements
of $B_q(f)$.

By Proposition \ref{basepbw},  every element of
$B_q(f)$ is a finite linear combination of monomials $u^a v^b w^c$. Since 
$u^n$, $v^n$ and $w^n$ are central, for any $a,b,c\in\mathbb{N}$
one may write
\[
a= n\alpha + r,\quad b=n\beta + s,\quad c=n\gamma + t,
\qquad 0\le r,s,t \le n-1.
\]
Thus,
\[
u^a v^b w^c
=(u^n)^{\alpha}(v^n)^{\beta}(w^n)^{\gamma}\,u^r v^s w^t,
\]
where the factor 
 $(u^n)^{\alpha}(v^n)^{\beta}(w^n)^{\gamma}$ belongs to
$Z\!\big(B_q(f)\big)$. Consequently, the algebra $B_q(f)$ is generated as a left
$Z\!\big(B_q(f)\big)$-module  by the finite set
\[
\{\,u^r v^s w^t \mid 0\le r,s,t\le n-1\,\}.
\]
So $B_q(f)$ is a finitely generated $Z\!\big(B_q(f)\big)$-module  and \cite[\S 13.1.13, Corollary]{mcconnell} implies that  $B_q(f)$  satisfies the PI property.
\end{proof}

\begin{ejemplo}
Fix an integer $n\geq 2$ and let $q\in \K$ be a primitive $n$-th root of unity. Consider the following examples of polynomials $f\in \K[t]$:
\begin{itemize}
\item $f(t)=a_0+a_1t^{n}+a_2t^{3n}$, with $a_0,a_1,a_2\in \K$ and $a_1\neq 0$.
In this case $\operatorname{supp}(f)\subseteq\{0,n,3n\}$, thus
 $n\mid j$, for every $j\in \operatorname{supp}(f)$. By Proposition
\ref{propiedadpindividej}, the algebra $B_q(f)$ satisfies the PI property.
\item $f(t)=b_1t^{2n}+b_2t^{4n}+b_3t^{5n}$, with $b_1,b_2,b_3\in \K$ and $b_1\neq 0$.
Then $\operatorname{supp}(f)\subseteq\{2n,4n,5n\}$. In particular, $n\mid j$ for every $j\in \operatorname{supp}(f)$ and Proposition \ref{propiedadpindividej} ensures that $B_q(f)$ is a PI algebra.
\end{itemize}
\end{ejemplo}

Although the proof of the preceding result is particularly elegant, drawing on the various identities and properties developed throughout this work, it does not guarantee that $B_q(f)$ satisfies the PI property in all cases. This is illustrated by the example below.

\begin{ejemplo}
Take $n=4$ and let $q\in \K$ be a primitive fourth root of unity. Consider $f(t)=t+t^{5}\in \K[t]$. Then $\operatorname{supp}(f)=\{1,5\}$. 
For each $j\in\{1,5\}$ one has $4\nmid j$, so the hypothesis of Proposition \ref{propiedadpindividej} is not satisfied. However, $4\nmid (j+1)$  for each $j\in\{1,5\}$, so the hypothesis of Lemma \ref{lema3.3} holds, and therefore $u^4$, $v^4\in Z(B_q(f))$.
\end{ejemplo}

The following result, established by Gaddis and Yee in \cite{GaddisYee2025}, generalizes the preceding proposition.

\begin{proposicion}{\cite[Proposition 3.11]{GaddisYee2025}} \label{prop:GY311}
Let $q$ be a primitive $n$-th root of unity with $n\geq 2$ and suppose that $n\nmid (j+1)$ for every $j\in \operatorname{supp}(f)$. Then $B_q(f)$ is a PI algebra.
\end{proposicion}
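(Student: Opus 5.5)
The plan is to show that $B_q(f)$ is a finitely generated module over a central subalgebra, and then conclude with \cite[\S 13.1.13, Corollary]{mcconnell}, as in Proposition \ref{propiedadpindividej}. The obstruction compared with that proposition is that Lemma \ref{lema3.7GY} no longer applies. So, although Lemma \ref{lema3.3} still gives $u^n, v^n \in Z(B_q(f))$, nothing guarantees that $w^n$ is central. The missing central element "in the $w$-direction" must therefore be produced another way.

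\textbf{Step 1 (central subring $Z_0$).} Set $Z_0 := \K[u^n, v^n] \subseteq Z(B_q(f))$, which is central by Lemma \ref{lema3.3}. The explicit content I will use is that $\sigma(u^n)=u^n$, $\sigma(v^n)=v^n$, and that $\delta(u^n)=\delta(v^n)=0$ by Lemma \ref{lema3.1}, since $[n]_{q^{j+1}}=0$ when $n\nmid (j+1)$. Consequently $\sigma$ and $\delta$ are $Z_0$-linear, and $C:=\K_q[u,v]$ is a free $Z_0$-module with basis $\{v^iu^j \mid 0\le i,j<n\}$.

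\textbf{Step 2 (pass to the fraction field).} Let $F$ be the fraction field of $Z_0$. Since $B_q(f)$ is a domain, which is torsion-free over $Z_0$, it embeds in $B_F := F\otimes_{Z_0} B_q(f) = C_F[w;\sigma,\delta]$. Here $C_F$ is an $n^2$-dimensional $F$-algebra, and $\sigma$ (of order $n$) and $\delta$ are $F$-linear. It suffices to prove that $B_F$ is PI, because a subring of a PI ring is PI.

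\textbf{Step 3 (a nonzero central polynomial in $w$; the main obstacle).} I want a nonconstant central element $P = w^{N} + \sum_{k<N} a_k w^{k}$ with $a_k \in C_F$. Once it exists, $B_F$ is a finitely generated module over $F[P] \subseteq Z(B_F)$: division by a monic polynomial in $w$ is available in the Ore extension, with basis $v^iu^jw^k$, $k<N$. \cite[\S 13.1.13, Corollary]{mcconnell} then gives the PI property.

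My first attempt is linear algebra. View left multiplication by $w$ as an $F$-linear operator on the finite-dimensional space $C_F$ twisted by $\sigma$: for $a \in C_F$, $w^k a = \sum_{i} \Delta_{k,i}(a)\, w^{i}$, where the $\Delta_{k,i}$ are $F$-linear endomorphisms of $C_F$ built from $\sigma$ and $\delta$. The maps $a\mapsto (\Delta_{k,i}(a))_i$ generate an $F$-subalgebra of $\operatorname{End}_F(C_F^{\,m})$, which is finite dimensional. A Cayley--Hamilton argument applied to the operator "$w$ acting on $C_F[w]/(\text{high degree})$" should then give a monic relation among the powers $w^{nk}$, up to lower-order terms, that commutes with $u$ and $v$. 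Commutation with $w$ is handled by choosing the correction coefficients $\sigma$-invariant and $\delta$-compatible.

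If this direct construction becomes unwieldy, my fallback is an explicit candidate. I would compute $w^n u - u w^n$ via Lemma \ref{wnidentidad}, which equals $[n]_q$-type sums involving $f(v)$ and lower powers of $w$. I would then correct $w^n$ by an element of $C_F[w]$ of degree $<n$, solving degree by degree, using that the hypothesis $n\nmid(j+1)$ makes the relevant $q$-numbers $[n]_{q^{j+1}}$ vanish, exactly as in Lemma \ref{lema3.3}. I expect this degree-by-degree solvability to be the genuine difficulty.
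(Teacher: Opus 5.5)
Step 3 is the entire content of your argument, and it is not carried out, so there is a genuine gap. The Cayley--Hamilton sketch gives at best a polynomial satisfied by some operator in $\operatorname{End}_F(C_F^{\,m})$. That is not an element of $B_F$, let alone a central one. Moreover, in $B_F$ the powers of $w$ are free over $C_F$, so there is no ``monic relation'' among them to exploit.

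The fallback relies on the wrong mechanism. The vanishing of $[n]_{q^{j+1}}$ in Lemma~\ref{lema3.3} concerns moving $w$ past $u^n$. Correcting $w^n$ to $w^n+\sum_{k<n}a_kw^k$ is a different problem: you must solve, coefficient by coefficient, twisted-derivation equations such as $c\,a_{n-1}-a_{n-1}\sigma^{-1}(c)=\sum_{i=0}^{n-1}\sigma^{i}\delta\sigma^{n-1-i}(c)$ for all $c\in C_F$. You give no reason these are solvable.

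The paper never needs a central element in the $w$-direction. It uses only three facts: $\K[u^n,v^n]\subseteq Z$ (so $\operatorname{GKdim}Z\ge 2$), $\operatorname{GKdim}B_q(f)=3$, and the Smith--Zhang inequality $\operatorname{GKdim}B\ge \operatorname{GKdim}Z+2$ for a domain that is not locally PI. Together these give $3\ge 4$, a contradiction.

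Your framework can be completed. The missing idea is that $C_F$ is not merely $n^2$-dimensional: it is a \emph{central simple} $F$-algebra, namely the symbol algebra generated by $u,v$ with $u^n,v^n\in F^{*}$ and $uv=qvu$. Consequently every $F$-linear automorphism and derivation of $C_F$ is inner.

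\begin{itemize}
\item By Skolem--Noether, $\sigma(c)=gcg^{-1}$ with $g=(uv)^{-1}$.
\item Hence $w':=uvw$ satisfies $w'c=cw'+\delta'(c)$, where $\delta':=uv\,\delta$ is an $F$-linear derivation of $C_F$. This is exactly where $n\nmid(j+1)$ enters, through $\delta(u^n)=\delta(v^n)=0$.
\item Derivations of a central simple algebra that vanish on the center are inner, so $\delta'=[h,-]$ for some $h\in C_F$.
\item Then $P:=uvw-h$ commutes with $C_F$, and therefore also with $w=(uv)^{-1}(P+h)$.
\item The leading coefficient of $P^k$ is the unit $(uv)^k$, so $B_F=\bigoplus_k C_FP^k\cong C_F\otimes_F F[P]$. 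This is free of rank $n^2$ over the central subalgebra $F[P]$, and your concluding step goes through.
\end{itemize}

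This route gives more than the paper's existential GK-dimension argument. It embeds $B_q(f)$ in a central simple algebra of degree $n$ over $F(P)$, so $B_q(f)$ satisfies the standard polynomial identity of degree $2n$.
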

\begin{proof}
Let $B:=B_q(f)$ and $Z:=Z(B)$. 
Lemma \ref{lema3.3} implies that $u^n,v^n \in Z$. In particular, $\K[u^n,v^n]\subseteq Z$ and the following holds
\[
\operatorname{GKdim}(Z)\ \ge\ \operatorname{GKdim}\big(\K[u^n,v^n]\big)=2.
\]
Since $B$ is a domain and $Z\setminus\{0\}$ is a central multiplicative
set of regular elements of $B$, the localization $\widehat{B}:=BZ^{-1}$
is well defined \cite[Lemma~2.1.3]{mcconnell}. 
Now, suppose that $B$ does not satisfy the PI property. Since  $B$ is finitely generated as $\K$-algebra, it is not locally PI\footnote{An algebra $A$ is locally PI if every finitely generated subalgebra $C\subseteq A$ satisfies the PI property.}:  indeed, $B$ itself is a finitely generated subalgebra of itself. Consequently, $\widehat{B}$ is not locally PI either, for if it were, then every finitely generated subalgebra of $B$, regarded as a subalgebra of $\widehat{B}$, would satisfy the PI property, and $B$ would therefore be locally PI, contradicting the assumption.
Applying Corollary 2 in \cite{SmithZhang1998}, one obtains
\[
\operatorname{GKdim}(B)\ \ge\ 2+\operatorname{GKdim}(Z).
\]
Moreover, Lemma 2.5 in \cite{GaddisYee2025} implies that $\operatorname{GKdim}(B)=3$, thus
\[
3=\operatorname{GKdim}(B)\ \ge\ 2+\operatorname{GKdim}(Z)\ \ge\ 2+2=4,
\]
a contradiction. Hence $B$ satisfies the PI property.
\end{proof}

When $q$ is not a root of unity, the following result is an immediate consequence.

\begin{proposicion}
If $q$ is not a root of unity, then $B_q(f)$ is not a PI algebra.
\end{proposicion}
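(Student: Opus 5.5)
The strategy mirrors the earlier arguments in the paper. We exhibit a subalgebra of $B_q(f)$ isomorphic to a quantum plane with parameter $q$. Subalgebras of PI algebras are PI, so the conclusion will follow from \cite[Section \S 7.1]{DeConciniProcesi1993}, which says $\mathcal O_q(\K^2)$ is PI only when $q$ is a root of unity.

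The natural candidate is $C:=\K\langle u,v\rangle\subseteq B_q(f)$, where the defining relation $uv=qvu$ holds. There is a surjection $\mathcal O_q(\K^2)\to C$ sending $x\mapsto v$ and $y\mapsto u$, since $\mathcal O_q(\K^2)=\K\langle x,y\mid yx=qxy\rangle$. To see that it is injective, use Proposition \ref{basepbw}. The monomials $v^iu^j$ are exactly the basis elements $v^iu^jw^0$, so they are linearly independent in $B_q(f)$. Since $\{x^iy^j\}$ is a basis of the quantum plane and it is carried onto this independent set, the map is an isomorphism. Equivalently, Lemma \ref{B_q(f)ore} shows $\K_q[u,v]$ is the coefficient ring of the Ore extension, and coefficient rings embed.

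With $C\cong\mathcal O_q(\K^2)$ in hand, the proof concludes as follows. Suppose $B_q(f)$ were PI. Then $C$ satisfies the same identity, so $\mathcal O_q(\K^2)$ is PI, and hence $q$ is a root of unity, contradicting the hypothesis. This argument works for every $f$, as the statement requires.

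The only point needing care is the direction of the quantum plane citation. The earlier sections use it in the form "PI implies $q$ is a root of unity", which is exactly what is needed here, so no real obstacle is expected. The main step to check is the injectivity of $\mathcal O_q(\K^2)\to C$, and the PBW basis settles it.
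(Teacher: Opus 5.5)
Your proposal is correct and follows the paper's proof. Both restrict to the quantum plane $\K_q[u,v]\subseteq B_q(f)$ and invoke \cite[Section \S 7.1]{DeConciniProcesi1993}; your injectivity check via the PBW basis of Proposition \ref{basepbw} (equivalently, the Ore extension structure of Lemma \ref{B_q(f)ore}) simply makes explicit the embedding that the paper takes for granted.
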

\begin{proof}
If $B_q(f)$ were a PI algebra, then every subalgebra would also satisfy the PI property; in particular, the quantum plane $\K_q[u,v]\subseteq B_q(f)$ would be PI. However, it is well known, by \cite[Section \S 7.1]{DeConciniProcesi1993}, that $\K_q[u,v]$ is PI if and only if $q$ is a root of unity. Therefore, if $q$ is not a root of unity, $B_q(f)$ cannot be a PI algebra. 
\end{proof} 
\subsection*{Central elements of \texorpdfstring{$B_q(f)$}{Bq(f)} in positive characteristic}
The result below shows how the characteristic of the base field directly influences the classification of certain central elements of some algebras. For $q=1$, one defines $[k]_1:=k\cdot 1_{\K}$. Thus $[k]_1=0$ if and only if $\operatorname{char}(\K)=p>0$ and $p\mid k$. 
\begin{lema}[A positive-characteristic version of Lemma~\ref{lema3.3}]
Let $\K$  be a field with $\operatorname{char}(\K)=p>0$ and let  $q$ be a primitive 
$n$-th root of unity. Suppose that one of the following conditions holds:
\begin{enumerate}
\item[\rm (i)] $n\nmid (j+1)$ for all $j\in \operatorname{supp}(f)$; or
\item[\rm (ii)] $p\mid n$.
\end{enumerate}
Then
\[
u^{n},\,v^{n}\in Z\!\big(B_q(f)\big).
\]
\end{lema}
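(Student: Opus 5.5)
The plan is to repeat the argument of Lemma~\ref{lema3.3}, keeping track of the one place where characteristic zero was implicitly relevant: the $q$-numbers $[n]_{q^{j+1}}$ in the case $q^{j+1}=1$. None of the structural ingredients depend on the characteristic. The Ore extension presentation of Lemma~\ref{B_q(f)ore}, the basis of Proposition~\ref{basepbw}, and the identities of Lemmas~\ref{lema3.1} and~\ref{lema3.2} were all proved by direct computation valid over an arbitrary field. So I will use them freely with $\operatorname{char}(\K)=p>0$.

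First, $u^n$ commutes with $u$ trivially. It commutes with $v$ because $vu^n=q^{-n}u^nv=u^nv$. For $w$, I apply identity \eqref{eq:delta-uk3} of Lemma~\ref{lema3.2} with $k=n$:
\[
wu^n=q^nu^nw+\sum_{j=0}^{d}[n]_{q^{j+1}}\,c_j\,v^ju^{n-1}=u^nw+\sum_{j\in\operatorname{supp}(f)}[n]_{q^{j+1}}\,c_j\,v^ju^{n-1}.
\]
It therefore suffices to show $[n]_{q^{j+1}}=0$ for every $j\in\operatorname{supp}(f)$. I split into two cases according to whether $q^{j+1}=1$.

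If $q^{j+1}\neq 1$, then $q^{j+1}$ is an $n$-th root of unity different from $1$. Hence
\[
[n]_{q^{j+1}}=\frac{(q^{j+1})^n-1}{q^{j+1}-1}=0,
\]
exactly as in Lemma~\ref{lema3.3}. Under hypothesis (i) this is the only case that occurs, since $q$ has order $n$ and $n\nmid(j+1)$. If instead $q^{j+1}=1$, which can only happen under hypothesis (ii), then $[n]_{q^{j+1}}=[n]_1=n\cdot 1_{\K}$. This vanishes because $p\mid n$. In either case every summand is zero, so $wu^n=u^nw$ and $u^n\in Z(B_q(f))$.

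For $v^n$ I run the symmetric argument. I use $uv^n=q^nv^nu=v^nu$ together with identity \eqref{eq:delta-uk4} of Lemma~\ref{lema3.2}, whose coefficients are $[n]_{q^{-(j+1)}}$. Since $q^{-(j+1)}=1$ if and only if $q^{j+1}=1$, the same two-case analysis applies verbatim.

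There is no real computational obstacle; the only delicate point is case (ii). A primitive $n$-th root of unity cannot exist in characteristic $p$ when $p\mid n$. Indeed, writing $n=pm$, $q^n=1$ gives $(q^m)^p=1$, hence $(q^m-1)^p=0$ and so $q^m=1$, contradicting primitivity. I therefore intend to note in the proof that hypothesis (ii) is in fact vacuous for a genuinely primitive root. The computation above shows what would make the conclusion hold in that situation: the degenerate coefficients $[n]_1$ vanish. I would record this as a remark rather than rely on it.
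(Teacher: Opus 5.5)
Your proof is correct and follows the paper's argument: apply the identities of Lemma~\ref{lema3.2} with $k=n$ and show that each coefficient $[n]_{q^{\pm(j+1)}}$ with $j\in\operatorname{supp}(f)$ vanishes. When $q^{j+1}\neq 1$ it is a vanishing geometric sum, and when $q^{j+1}=1$ it equals $[n]_1=n\cdot 1_{\K}=0$. You split the cases by whether $q^{j+1}=1$, while the paper splits by hypothesis (i) versus (ii) and tacitly takes $q^{j+1}=1$ under (ii); your split is slightly cleaner. Your remark that (ii) is vacuous is also correct, since $\K^{*}$ has no element whose order is divisible by $p=\operatorname{char}(\K)$.
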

\begin{proof}
Since $q^n=1$, one has $vu^n=q^{-n}u^n v=u^n v$, so $u^n$ commutes with $v$. Moreover, applying identity (\ref{eq:delta-uk3}) of Lemma \ref{lema3.2} with $k=n$ 
\[
w u^n=q^n u^n w+\sum_{j=0}^{d}[n]_{q^{j+1}}\,c_j\,v^j\,u^{n-1}
=u^n w+\sum_{j=0}^{d}[n]_{q^{j+1}}\,c_j\,v^j\,u^{n-1}.
\]
It therefore suffices to show that the sum vanishes. If $j\notin \operatorname{supp}(f)$, then $c_j=0$ and the corresponding summand is zero. Suppose 
 $j\in \operatorname{supp}(f)$, so that $c_j\neq 0$.
Since $q$ has order $n$, 
\[
q^{j+1}=1 \quad\Longleftrightarrow\quad n\mid (j+1).
\]
If condition (i) holds, then  $n\nmid (j+1)$, hence $q^{j+1}\neq 1$ and $(q^{j+1})^n=1$.
 Consequently,
\[
[n]_{q^{j+1}}=\frac{(q^{j+1})^n-1}{q^{j+1}-1}=\frac{1-1}{q^{j+1}-1}=0.
\]
On the other hand, if condition  (ii) is satisfied then $n=0$ as an element of $\K$ and
\[
[n]_1=\underbrace{1+\cdots+1}_{n\ \text{times}}=n=0\quad \text{in }\K.
\]
Thus, $[n]_{q^{j+1}}=[n]_1=0$.

In either case,  $[n]_{q^{j+1}}=0$, for every $j\in \operatorname{supp}(f)$, 
and the sum vanishes. It follows that $w u^n=u^n w$, that is, $u^n\in Z(B_q(f))$. Finally, the proof that $v^n\in  Z(B_q(f))$ is analogous, applying identity (\ref{eq:delta-uk4}) of Lemma \ref{lema3.2}.
\end{proof}

\section{Final comments and open questions}

In this section, we present some final comments and open questions that may
inspire future research.

One possible direction for future work is to study the PI property for other
classes of algebras. For instance, one may consider other double Ore
extensions, in particular the trimmed double Ore extensions defined in
Section~\ref{oredobles}. It is also natural to consider those double Ore
extensions that can be realized as iterated Ore extensions; in this case, the
results developed in \cite{LeroyMatczuk2007} could be applied.

Let $f\in \K[t]$ be a polynomial, and fix scalars $r,s,\gamma\in \K$. The
\emph{generalized Down--Up algebra} $L=L(f,r,s,\gamma)$ is the unital
a\-sso\-ciative $k$-algebra generated by $d,u,h$, subject to the relations
\begin{align}
dh-rhd+\gamma d&=0, \tag{1.1}\\
hu-ruh+\gamma u&=0, \tag{1.2}\\
du-sud+f(h)&=0. \tag{1.3}
\end{align}

When $f$ has degree one, one recovers all down--up algebras
$A(\alpha,\beta,\gamma)$, with $\alpha,\beta,\gamma\in \K$, for suitable
choices of the parameters defining $L$. In this context, it is natural to
study the PI pro\-per\-ty for this class of algebras. In particular, one may ask
under which conditions on the parameters $r,s,\gamma$ and on the polynomial
$f$ the algebra $L(f,r,s,\gamma)$ satisfies a polynomial identity.

On the other hand, let $R$ and $A$ be rings. We say that $A$ is a
\emph{skew PBW extension} of $R$, or simply a \emph{$\sigma$-PBW extension},
if the following conditions hold:
\begin{itemize}
\item[{\rm (i)}] $R\subseteq A$.

\item[{\rm (ii)}] There exist elements $x_1,x_2,\dots,x_n\in A$ such that
$A$ is a left free $R$-module with basis $\operatorname{Mon}(A)$, where
$\operatorname{Mon}(A)$ denotes the set of standard monomials; that is,
\[
\operatorname{Mon}(A):=
\left\{
x_1^{\alpha_1}\cdots x_n^{\alpha_n}
\mid
\alpha=(\alpha_1,\dots,\alpha_n)\in \mathbb{N}^n
\right\}.
\]

\item[{\rm (iii)}] For each $1\leq i\leq n$ and each
$r\in R\setminus\{0\}$, there exists $c_{i,r}\in R\setminus\{0\}$ such that
\[
x_ir-c_{i,r}x_i\in R.
\]

\item[{\rm (iv)}] For all $1\leq i,j\leq n$, there exists
$c_{i,j}\in R\setminus\{0\}$ such that
\[
x_jx_i-c_{i,j}x_ix_j\in R+Rx_1+\cdots+Rx_n.
\]
\end{itemize}
Skew PBW extensions are denoted by
\[
A:=\sigma(R)\langle x_1,\dots,x_n\rangle.
\]

From this definition, another interesting direction is to study the PI
property for skew PBW extensions, at least for some particular subclasses of
these algebras. One motivation for this is that several algebras considered
in this paper, such as the two-parameter quantum Heisenberg algebras and the
algebras $B_q(f)$, can be realized as skew PBW extensions in suitable
presentations.

\subsection*{The Ozone group}

Let $A$ be a $\K$-algebra and let $Z=Z(A)$ be its center. The
\emph{Ozone group} of $A$ over $Z$ is defined as the group of automorphisms
of $A$ that fix the center pointwise; that is,
\[
\operatorname{Oz}(A):=\operatorname{Aut}_{Z\text{-alg}}(A).
\]

By \cite[Theorem~0.7]{ChanGaddisWonZhang2025}, if $A$ is a Noetherian
Artin--Schelter regular PI algebra and $Z=Z(A)$, then
\[
1\leq |\operatorname{Oz}(A)|\leq \operatorname{rk}_Z(A).
\]
This inequality shows that the order of the Ozone group is bounded above by
the rank of $A$ as a module over its center. For this reason, the explicit
computation of $\operatorname{Oz}(A)$, or at least the determination of
bounds for its order, constitutes another possible direction for future work.
Indeed, this perspective was one of the motivations for the development of
the present work.

\bibliographystyle{plain}
\bibliography{biblio}

@article{Launois2007,
  author = {Launois, S.},
  title = {Primitive ideals and automorphism group of {$U_q^+(B_2)$}},
  journal = {J. Algebra Appl.},
volume = {06},
number = {01},
pages = {21-47},
year = {2007},
doi = {10.1142/S0219498807002053},
URL = {https://doi.org/10.1142/S0219498807002053}
}

@article{Jimbo1985,
  author = {Jimbo, M.},
  title = {A q-difference analogue of \text{U}(g) and the \text{Y}ang-\text{B}axter equation},
  journal = {Lett. Math. Phys.},
  year = {1985},
  volume = {10},
  pages = {63--69}
}

@article{Drinfeld1986,
  author = {V. G. Drinfeld},
  title = {Quantum Groups},
  journal = {Proceedings of the International Congress of Mathematicians},
  year = {1986},
  pages = {798--820}
}

@article{benkart,
title = {Down–\text{U}p Algebras},
journal = {J. Algebra},
volume = {209},
number = {1},
pages = {305-344},
year = {1998},
issn = {0021-8693},
doi = {https://doi.org/10.1006/jabr.1998.7511},
url = {https://www.sciencedirect.com/science/article/pii/S0021869398975111},
author = {Benkart, G. and Roby, T.},
}

@misc{Bell,
  author       = {Bell, A. D.},
  title        = {Notes on localization in noncommutative noetherian rings},
  year         = {2000},
  howpublished = {Manuscript/notes},
  url          = {https://api.semanticscholar.org/CorpusID:162173433},
  note = {\url{https://api.semanticscholar.org/CorpusID:162173433}}
}

@article{Bavula4,
  author  = {Bavula, V. V. and Lenagan, T. H.},
  title   = {Generalized \text{W}eyl Algebras Are Tensor Krull Minimal},
  journal = {Journal of Algebra},
  volume  = {239},
  number  = {1},
  pages   = {93--111},
  year    = {2001},
  issn    = {0021-8693},
  doi     = {10.1006/jabr.2000.8641},
  url     = {https://www.sciencedirect.com/science/article/pii/S0021869300986411}
}

@article{Kirkman,
  author  = {Kirkman, E. E. and Musson, I. M. and Passman, D. S.},
  title   = {Noetherian \text{D}own-\text{U}p \text{A}lgebras},
  journal = {Proc. Amer. Math. Soc.},
  volume  = {127},
  number  = {11},
  pages   = {3161--3167},
  year    = {1999},
  url     = {http://www.jstor.org/stable/119509},
  note    = {JSTOR}
}

@article{carvalho,
  author  = {Carvalho, P. A. A. B. and Musson, Ian M.},
  title   = {Down--\text{U}p Algebras and Their Representation Theory},
  journal = {J. Algebra},
  volume  = {228},
  number  = {1},
  pages   = {286--310},
  year    = {2000},
  issn    = {0021-8693},
  doi     = {10.1006/jabr.1999.8263},
  url     = {https://www.sciencedirect.com/science/article/pii/S0021869399982637}
}

@article{carvalho2,
  author  = {Carvalho, P. A. A. B. and Musson, I. M.},
  title   = {Monolithic Modules Over Noetherian Rings},
  journal = {Glasg. Math. J.},
  volume  = {53},
  pages   = {483--491},
  year    = {2011},
  doi     = {10.1017/S001708951000085X}
}

@phdthesis{carvalhotesis,
  author  = {Carvalho, P. A. A. B.},
  title   = {Some Noetherian Rings},
  school  = {University of Glasgow},
  address = {Glasgow, Scotland},
  year    = {1997},
  url     = {https://theses.gla.ac.uk},
  note    = {Available at University of Glasgow Theses Service}
}

@article{carvalho3,
  author  = {Carvalho, P. A. A. B. and Lomp, C. and Pusat-Yilmaz, D.},
  title   = {Injective modules over \text{D}own–\text{U}p algebras},
  journal = {Glasg. Math. J.},
  volume  = {52},
  pages   = {53--59},
  year    = {2010},
  doi     = {10.1017/S0017089510000261}
}

@article {dehn,
    AUTHOR = {Dehn, M.},
     TITLE = {{\"U}ber die {G}rundlagen der projektiven {G}eometrie und
              allgemeine {Z}ahlsysteme},
   JOURNAL = {Math. Ann.},
  FJOURNAL = {Mathematische Annalen},
    VOLUME = {85},
      YEAR = {1922},
    NUMBER = {1},
     PAGES = {184--194},
      ISSN = {0025-5831,1432-1807},
   MRCLASS = {99-04},
  MRNUMBER = {1512061},
       DOI = {10.1007/BF01449618},
       URL = {https://doi.org/10.1007/BF01449618},
}

@article{gallego,
  author  = {Gallego, C. and Solotar, A.},
  title   = {Stable rank of \text{D}own-\text{U}p algebras},
  journal = {J. Algebra},
  volume  = {526},
  pages   = {266--282},
  year    = {2019},
  issn    = {0021-8693},
  doi     = {10.1016/j.jalgebra.2018.02.037},
  url     = {https://www.sciencedirect.com/science/article/pii/S0021869318301959}
}

@article{Jordan,
  author  = {Jordan, D. A.},
  title   = {\text{D}own--\text{U}p Algebras and Ambiskew Polynomial Rings},
  journal = {J. Algebra},
  volume  = {228},
  number  = {1},
  pages   = {311--346},
  year    = {2000},
  issn    = {0021-8693},
  doi     = {10.1006/jabr.1999.8264},
  url     = {https://www.sciencedirect.com/science/article/pii/S0021869399982649}
}

@book {jacobson,
    AUTHOR = {Jacobson, N.},
     TITLE = {Collected mathematical papers. {V}ol. 2},
    SERIES = {Contemporary Mathematicians},
      NOTE = {(1947--1965)},
 PUBLISHER = {Birkh\"auser Boston, Inc., Boston, MA},
      YEAR = {1989},
     PAGES = {xviii+556},
      ISBN = {0-8176-3411-8},
   MRCLASS = {01A75 (16-03)},
  MRNUMBER = {1036025},
MRREVIEWER = {S.\ S.\ Page},
       DOI = {10.1007/978-1-4612-3692-4},
       URL = {https://doi.org/10.1007/978-1-4612-3692-4},
}

@article {kaplansky,
    AUTHOR = {Kaplansky, I.},
     TITLE = {Rings with a polynomial identity},
   JOURNAL = {Bull. Amer. Math. Soc.},
  FJOURNAL = {Bulletin of the American Mathematical Society},
    VOLUME = {54},
      YEAR = {1948},
     PAGES = {575--580},
      ISSN = {0002-9904},
   MRCLASS = {09.1X},
  MRNUMBER = {25451},
MRREVIEWER = {J.\ Dieudonn\'e},
       DOI = {10.1090/S0002-9904-1948-09049-8},
       URL = {https://doi.org/10.1090/S0002-9904-1948-09049-8},
}

@article{Kulkarni,
  author  = {Kulkarni, R. S.},
  title   = {Down--\text{U}p Algebras and Their Representations},
  journal = {J. Algebra},
  volume  = {245},
  number  = {2},
  pages   = {431--462},
  year    = {2001},
  issn    = {0021-8693},
  doi     = {10.1006/jabr.2001.8892},
  url     = {https://www.sciencedirect.com/science/article/pii/S0021869301988921}
}

@article{solotar,
  author  = {Chouhy, S. and Herscovich, E. and Solotar, A.},
  title   = {Hochschild homology and cohomology of \text{D}own–\text{U}p algebras},
  journal = {J. Algebra},
  volume  = {498},
  pages   = {102--128},
  year    = {2018},
  issn    = {0021-8693},
  doi     = {10.1016/j.jalgebra.2017.11.026},
  url     = {https://www.sciencedirect.com/science/article/pii/S0021869317306130}
}

@misc{Etingof2024,
  author       = {Etingof, P.},
  title        = {Lie \text{G}roups and \text{L}ie \text{A}lgebras \text{II}},
  howpublished = {MIT OpenCourseWare, Course 18.755},
  year         = {2024},
  note         = {Full lecture notes, Spring 2024},
  url          = {https://ocw.mit.edu/courses/18-755-lie-groups-and-lie-algebras-ii-spring-2024/mit18_755_s24_lec_full.pdf}
}

@book{mcconnell,
  author    = {McConnell, J. C. and Robson, J. C. and Small, L. W.},
  title     = {Noncommutative Noetherian Rings},
  publisher = {American Mathematical Society},
  series    = {Graduate Studies in Mathematics},
  volume    = {30},
  year      = {2001},
  isbn      = {9780821821695},
  url       = {https://books.google.com.co/books?id=DcTpBwAAQBAJ}
}

@incollection{AndruskiewitschDumas2008,
  author    = {Andruskiewitsch, N. and Dumas, F.},
  title     = {On the automorphisms of {$U_q^{+}(\mathfrak g)$}},
  booktitle = {Quantum Groups},
  series    = {IRMA Lectures in Mathematics and Theoretical Physics},
  volume    = {12},
  pages     = {107--133},
  publisher = {European Mathematical Society},
  address   = {Z{\"u}rich},
  year      = {2008}
}

@article{ArtinSchelterTate1991,
  author  = {Artin, M. and Schelter, W. and Tate, J.},
  title   = {Quantum deformations of {$\mathrm{GL}_n$}},
  journal = {Comm. Pure Appl. Math.},
  volume  = {44},
  number  = {8-9},
  pages   = {879--895},
  year    = {1991},
  doi     = {10.1002/cpa.3160440804}
}

@article{AkhavizadeganJordan1996,
  author  = {Akhavizadegan, M. and Jordan, D. A.},
  title   = {Prime ideals of quantized \text{W}eyl algebras},
  journal = {Glasg. Math. J.},
  volume  = {38},
  number  = {3},
  pages   = {283--297},
  year    = {1996},
  doi     = {10.1017/S0017089500031712}
}

@article{Bavula2023,
  author  = {Bavula, V. V.},
  title   = {Description of bi-quadratic algebras on 3 generators with {PBW} basis},
  journal = {J. Algebra},
  volume  = {631},
  pages   = {695--730},
  year    = {2023},
  doi     = {10.1016/j.jalgebra.2023.05.013},
  url     = {https://www.sciencedirect.com/science/article/pii/S0021869323002442}
}

@article{Bavula2024,
  author  = {Bavula, V. V.},
  title   = {The 3-cyclic quantum \text{W}eyl algebras, their prime spectra and a classification of simple modules ($q$ is not a root of unity)},
  journal = {J. Noncommut. Geom.},
  volume  = {18},
  pages   = {1041--1079},
  year    = {2024},
  doi     = {10.4171/JNCG/547},
  url     = {https://ems.press/journals/jncg/articles/14289645}
}

@misc{BeraMandalMukherjeeNandy2024,
  author       = {Bera, S. and Mandal, S. and Mukherjee, S. and Nandy, S.},
  title        = {Simple modules over 3-cyclic quantum \text{W}eyl algebra at roots of unity},
  year         = {2024},
  eprint       = {2406.14375},
  archivePrefix= {arXiv},
  primaryClass = {math.RT},
  url          = {https://arxiv.org/abs/2406.14375},
  note         = {\url{https://arxiv.org/abs/2406.14375}}
}

@article{BeraMukherjee2023,
  author  = {Bera, S. and Mukherjee, S.},
  title   = {\text{PI} degree of quantized \text{W}eyl algebras},
  journal = {Proc. Indian Acad. Sci. Math. Sci.},
  volume  = {133},
  pages   = {18},
  year    = {2023}
}

@misc{BeraMukherjee2025,
  author       = {Bera, S and Mukherjee, S.},
  title        = {On simple modules over the quantum matrix algebra at roots of unity},
  year         = {2025},
  eprint       = {2503.10394},
  archivePrefix= {arXiv},
  primaryClass = {math.RT},
  url          = {https://arxiv.org/abs/2503.10394},
  note = {\url{https://arxiv.org/abs/2503.10394}}
}

@misc{BeraMukherjeeUqB2,
  author       = {Bera, S. and Mukherjee, S.},
  title        = {{$U_q^{+}(B_2)$ and its representations}},
  year         = {2025},
  eprint       = {2503.21170},
  archivePrefix= {arXiv},
  primaryClass = {math.RT},
  url          = {https://arxiv.org/abs/2503.21170},
  note = {\url{https://arxiv.org/abs/2503.21170}}
}

@phdthesis{cauchon,
  author  = {Cauchon, G.},
  title   = {Les T-anneaux et les anneaux {\`a} identit{\'e} polynomiale noeth{\'e}riens},
  school  = {Universit{\'e} de Paris-Sud},
  address = {Orsay},
  year    = {1977},
  type    = {Th{\`e}se}
}

@article{ChanGaddisWonZhang2025,
  author  = {Chan, K. and Gaddis, J. and Won, R. and Zhang, J. J.},
  title   = {\text{O}zone groups of \text{A}rtin--\text{S}chelter regular algebras satisfying a polynomial identity},
  journal = {Math. Z.},
  volume  = {310},
  pages   = {69},
  year    = {2025}
}

@article{damiano,
  author  = {Daniano, R. F. and Shapiro, J.},
  title   = {Twisted polynomial rings satisfying a polynomial identity},
  journal = {J. Algebra},
  volume  = {92},
  pages   = {116--127},
  year    = {1985}
}

@book{DrenskyFormanek2004,
  author    = {Drensky, V. and Formanek, E.},
  title     = {Polynomial Identity Rings},
  publisher = {Birkh{\"a}user},
  series    = {Advanced Courses in Mathematics -- CRM Barcelona},
  address   = {Basel},
  year      = {2004},
  isbn      = {978-3-7643-7126-5}
}

@article{Gaddis2016,
  author  = {Gaddis, J.},
  title   = {Two-parameter analogs of the \text{H}eisenberg enveloping algebra},
  journal = {Comm. Algebra},
  volume  = {44},
  number  = {11},
  pages   = {4637--4653},
  year    = {2016},
  doi     = {10.1080/00927872.2015.1101468},
  url     = {https://arxiv.org/abs/1308.4427}
}

@article{hayashi,
  author  = {Hayashi, T.},
  title   = {$q$-analogues of \text{C}lifford and \text{W}eyl algebras---spinor and oscillator representations of quantum enveloping algebras},
  journal = {Comm. Math. Phys.},
  volume  = {127},
  number  = {1},
  pages   = {129--144},
  year    = {1990},
  doi     = {10.1007/BF02096497},
  url     = {https://link.springer.com/article/10.1007/BF02096497}
}

@article{Jordan1995,
  author  = {Jordan, D. A.},
  title   = {A simple localization of the quantized \text{W}eyl algebra},
  journal = {J. Algebra},
  volume  = {174},
  number  = {1},
  pages   = {267--281},
  year    = {1995},
  doi     = {10.1006/jabr.1995.1128},
  url     = {https://www.sciencedirect.com/science/article/pii/S0021869385711283}
}

@article{ks,
  author  = {Kirkman, E. E. and Small, L. W.},
  title   = {$q$-analogs of harmonic oscillators and related rings},
  journal = {Israel J. Math.},
  volume  = {81},
  number  = {1-2},
  pages   = {111--127},
  year    = {1993}
}

@article{LeroyMatczuk2007,
  author  = {Leroy, A. and Matczuk, J.},
  title   = {Ore extensions satisfying a polynomial identity},
  journal = {J. Algebra Appl.},
  volume  = {2},
  number  = {3},
  pages   = {257--273},
  year    = {2007}
}

@article{Maltsiniotis1990,
  author  = {Maltsiniotis, G.},
  title   = {Groupes quantiques et structures diff{\'e}rentielles},
  journal = {C. R. Math. Acad. Sci. Paris},
  volume  = {311},
  pages   = {831--834},
  year    = {1990}
}

@article{pasca,
  author  = {Pascaud, J. and Valette, J.},
  title   = {Polyn{\^o}mes tordus {\`a} identit{\'e} polynomiale},
  journal = {Comm. Algebra},
  volume  = {16},
  pages   = {2415--2425},
  year    = {1988}
}

@article {wagner,
AUTHOR = {Wagner, W.},
TITLE = {{\"U}ber die {G}rundlagen der projektiven {G}eometrie und
allgemeine {Z}ahlensysteme},
JOURNAL = {Math. Ann.},
FJOURNAL = {Mathematische Annalen},
VOLUME = {113},
      YEAR = {1937},
    NUMBER = {1},
     PAGES = {528--567},
      ISSN = {0025-5831,1432-1807},
   MRCLASS = {99-04},
  MRNUMBER = {1513106},
       DOI = {10.1007/BF01571649},
       URL = {https://doi.org/10.1007/BF01571649},
}

@article{Takeuchi1990,
  author  = {Takeuchi, M.},
  title   = {A two-parameter quantization of $\mathrm{GL}_n$},
  journal = {Proceedings of the Japan Academy, Series A, Mathematical Sciences},
  volume  = {66},
  number  = {5},
  pages   = {112--114},
  year    = {1990}
}

@article{Zhang,
  author  = {Zhang, J. J. and Zhang, J.},
  title   = {Double \text{O}re extensions},
  journal = {J. Pure Appl. Algebra},
  volume  = {212},
  number  = {12},
  pages   = {2668--2690},
  year    = {2008},
  doi     = {10.1016/j.jpaa.2008.05.008},
  url     = {https://www.sciencedirect.com/science/article/pii/S0022404908001023}
}

@article{Zhang2,
  author  = {Zhang, J. J. and Zhang, J.},
  title   = {Double extension regular algebras of type (14641)},
  journal = {J. Algebra},
  volume  = {322},
  number  = {2},
  pages   = {373--409},
  year    = {2009},
  doi     = {10.1016/j.jalgebra.2009.03.041},
  url     = {https://www.sciencedirect.com/science/article/pii/S0021869309002270}
}

@misc{GaddisYee2025,
  author       = {Gaddis, J. and Yee, D.},
  title        = {A family of trivial \text{O}zone algebras},
  year         = {2025},
  eprint       = {2512.09766},
  archivePrefix= {arXiv},
  primaryClass = {math.RA},
  url          = {https://arxiv.org/abs/2512.09766},
  note ={\url{https://arxiv.org/abs/2512.09766}}
}

@article{SmithZhang1998,
  author  = {Smith, S. P. and Zhang, J. J.},
  title   = {A remark on {G}elfand--{K}irillov dimension},
  journal = {Proc. Amer. Math. Soc.},
  volume  = {126},
  number  = {2},
  pages   = {349--352},
  year    = {1998},
  doi     = {10.1090/S0002-9939-98-04074-X},
  url     = {https://www.ams.org/journals/proc/1998-126-02/S0002-9939-98-04074-X/}
}

@incollection{DeConciniProcesi1993,
  author    = {De Concini, C. and Procesi, C.},
  title     = {Quantum groups},
  booktitle = {$D$-modules, Representation Theory, and Quantum Groups},
  editor    = {Zampieri, Giuseppe and D'Agnolo, Andrea},
  series    = {Lecture Notes in Mathematics},
  volume    = {1565},
  pages     = {31--140},
  publisher = {Springer},
  address   = {Berlin},
  year      = {1993},
}
\bigskip
\noindent
J.G.:\\
Escuela de Matemáticas y Estadística, Universidad Pedagógica y Tecnológica de Colombia,\\
Tunja, Colombia\\
james.gomez@uptc.edu.co\\
\\
C.G.:\\
Departamento de Matemáticas, Pontificia Universidad Javeriana,\\
Bogotá, Colombia\\
gallegoj.cm@javeriana.edu.co

\end{document}